\numberwithin{equation}{section}
\theoremstyle{plain}
\newtheorem{lemma}{Lemma}[section]
\newtheorem{theorem}[lemma]{Theorem}
\newtheorem{prop}[lemma]{Proposition}
\newtheorem{assp}{Assumption}
\newtheorem{rem}{Remark}
\newcommand{\E}{\mathbb{E}}
\newcommand{\N}{\mathbb{N}}
\newcommand{\PP}{\mathbb{P}}
\newcommand{\RR}{\mathbb{R}}
\newcommand{\I}{\mathbb{I}}
\newcommand{\rd}{\mathrm d}
\def\<{\langle}
\def\>{\rangle}
\def\a{\alpha}
\def\nn{\nonumber}
\def\bc{\begin{center}}       \def\ec{\end{center}}
\def\ba{\begin{array}}        \def\ea{\end{array}}
\def\be{\begin{equation}}     \def\ee{\end{equation}}
\def\bea{\begin{eqnarray}}    \def\eea{\end{eqnarray}}
\def\beaa{\begin{eqnarray*}}  \def\eeaa{\end{eqnarray*}}
\begin{document}
\title[LIL for numerical approximation of Markov process]{The law of iterated logarithm for numerical approximation of  time-homogeneous Markov process}
\subjclass[2020]{60H35; 60J25; 60F17; 37M25}
\author{Chuchu Chen, Xinyu Chen, Jialin Hong}
\address{State Key Laboratory of Mathematical Sciences, Academy of Mathematics and Systems Science, Chinese Academy of Sciences, Beijing 100190, China,
\and 
School of Mathematical Sciences, University of Chinese Academy of Sciences, Beijing 100049, China}
\email{chenchuchu@lsec.cc.ac.cn; chenxinyu@amss.ac.cn; hjl@lsec.cc.ac.cn}
\thanks{This work is funded by the National Key R\&D Program of China under Grant (No. 2024YFA1015900),  by the National Natural Science Foundation of China (No. 12031020, No. 12461160278, and No. 12471386), and by Youth Innovation Promotion Association CAS}
\begin{abstract}
The law of the iterated logarithm (LIL) for the time-homogeneous Markov process with a unique invariant measure characterizes the almost sure maximum possible fluctuation of  time averages around the ergodic limit. Whether a numerical approximation can preserve this asymptotic pathwise behavior remains an open problem. 
In this work, we give a positive answer to this question and establish the LIL for the numerical approximation of  such a process
under verifiable assumptions.
The  Markov process is discretized by a decreasing time-step strategy, which yields the non-homogeneous numerical approximation but
facilitates a martingale-based analysis.
The key ingredient in proving the LIL for such numerical approximation lies in extracting a quasi-uniform  time-grid subsequence from the original non-uniform time grids and
establishing the LIL for a predominant martingale along it, 
while the remainder terms converge to zero.
Finally, we illustrate that our results can be flexibly applied to numerical approximations of a broad class of stochastic systems, including SODEs and SPDEs.
\end{abstract}

\keywords{Markov process $\cdot$ Numerical approximation $\cdot$  Law of the iterated logarithm $\cdot$ Ergodic limit}
\maketitle
\section{Introduction}\label{Tntr}
The  functional law of the iterated logarithm (LIL),  which is also known as Strassen's invariance principle, is first established for sums of independent and identically distributed random variables 
in \cite{Strassen1964}. 
This result is subsequently extended to the martingale setting in \cite{Heyde1973}, where a general LIL criterion for the square-integrable martingale is developed under mild regularity conditions on the increments.
Further progress has been made for the  time-homogeneous Markov process admitting a unique invariant measure~$\mu$.
Specifically, if there exist a class of test functions $f$ and  a limit constant $v>0$ such that
\begin{align*}  
\limsup_{T\to\infty} \frac{1}{\sqrt{2T\log \log T}} \int^T_{0} \big( f(X_t) -\mu(f) \big)\rd t =v, \quad \text{a.s.}
\end{align*}
and
\begin{align*}  
\liminf_{T\to\infty} \frac{1}{\sqrt{2T\log \log T}} \int^T_{0} \big( f(X_t) - \mu(f) \big)\rd t = - v, \quad \text{a.s.},
\end{align*}
then the process $\{X_t\}_{t\geq 0}$ is said to satisfy the LIL. 
In this setting, the LIL characterizes the  utmost fluctuation of the time average
$\frac 1T \int^T_0 f(X_t) \rd t$  around the ergodic limit $\mu(f)$, thereby providing a quantitative assessment of how effectively long-time averages approximate the ergodic limit.
For time-homogeneous Markov processes,  the LILs are established for both the stationary and non-stationary cases under different settings (see \cite{Baxter1976, Bhattacharya1982, Bolt2012, Czapla2021AA}).
As the solution processes of autonomous stochastic dynamical systems
naturally form time-homogeneous Markov processes, recent studies establish the LILs for concrete stochastic systems. 
Authors in \cite{Komorowski2014, Liu2025} establish the LILs for the two-dimensional Navier--Stokes equation 
driven by a non-degenerate Gaussian noise and by a time-periodic force together with a degenerate noise, respectively. 
For stochastic functional differential equations, the LILs are established for both finite and infinite delays \cite{Bao2020, Wu2022}, and also for regime-switching diffusion processes with infinite delay \cite{Wu2024}.

Since explicit solutions of stochastic  systems, such as stochastic ordinary differential equations (SODEs) and stochastic partial differential equations (SPDEs),  are typically inaccessible, numerical schemes serve as indispensable tools for investigating their dynamics and long-term behavior.
Given the established LILs for continuous-time stochastic systems, a natural  question arises: can the numerical approximation preserve the LIL for the exact solution, thereby capturing the maximum fluctuation of the time averages around the ergodic limit? More specifically, can we construct a numerical approximation $\{Y^{\tau}_{t_k}\}_{k\in\N^+}$ with some suitable time-step sequence $\tau:=\{\tau_k\}_{k\in\N^+}$ such that 
$$
\limsup_{n \to \infty} \frac{1}{\sqrt{2t_n\log\log t_n}} \sum^n_{k=1} \tau_k (f(Y^{\tau}_{t_k}) - \mu(f)) = v, \quad \text{a.s.}
$$
and
$$
\liminf_{n \to \infty} \frac{1}{\sqrt{2t_n\log\log t_n}} \sum^n_{k=1} \tau_k (f(Y^{\tau}_{t_k}) - \mu(f)) = -v, \quad \text{a.s.}?
$$
To the best of our knowledge, this question has not been addressed in the existing literature.

In this work, we prove that the LIL of the underlying continuous process is preserved by the numerical approximation with decreasing time steps, under assumptions on moment boundedness,  attractiveness, and convergence (see \Cref{thm_numerLIL}).
The decreasing time-step strategy enables the martingale-based analysis,
which is not feasible in the uniform-step setting. However, it simultaneously introduces non-homogeneity into the numerical approximation due to the non-uniformity of the time steps, making the analysis more technical.
To address this, 
we extract a quasi-uniform  subsequence $\{\tilde{t}_N\}_{N\in\N}$ from the original time grids $\{t_n\}_{n\in\N}$ and  construct a predominant martingale $\{ \tilde{\mathscr{M}}^{x,\tau}_{N}\}_{N\in\N}$ along it,
which we further prove to satisfy the LIL, while the remainder terms are proved to converge to zero. 

For proving the LIL of $\{ \tilde{\mathscr{M}}^{x,\tau}_{N}\}_{N\in\N}$, the 
key requirement is to establish the $L^2$- and almost sure limits of the corresponding martingale difference sequence $\{\tilde{\mathcal Z}^{x,\tau}_{N}\}_{N\in\N}$, which, however, 
requires new  technical analysis due to the lack of the time-shift invariance. 
First, in the non-homogeneous case, a unified functional representation is no longer available, whereas in the homogeneous setting it plays a crucial role in establishing the $L^2$-limit. Here, we couple the time-step sequence with the regularity parameters of the considered processes and show that the sum of the second moments of $\{\tilde{\mathcal Z}^{x,\tau}_N\}_{N\in\N}$ grows linearly with respect to the quasi-uniform time $\tilde{t}_N$ (see \Cref{prop_ZkSquareMoment}).
Second, since the existing tools for establishing the almost sure limit rely on the semigroup framework, which is not applicable here, we instead analyze the transition operator and estimate high-order moments for the deviation between the time average of $\{\tilde{\mathcal Z}^{x,\tau}_{N}\}_{N\in\N}$ and $v^2$, thereby obtaining the almost sure limit  (see \Cref{prop_almostsureZk}). 

Our theoretical results establish the LIL for decreasing-step numerical approximation, demonstrating that the numerical approximation faithfully captures the maximal pathwise fluctuation of the original process $\{X_t\}_{t\geq 0}$ around its ergodic limit and thereby confirming the  effectiveness of numerical time-averaged  estimators. Finally, we apply our theoretical results to specific SODEs and SPDEs, establishing the LILs for both the exact solutions and their numerical approximations with explicit expression of the limit constant $v$. This illustrates that our results can be flexibly applied to numerical approximations of a broad class of stochastic systems.
\subsection{Related works on numerical limit theorems}
Besides the LIL, the strong law of large numbers (SLLN) and the central limit theorem (CLT) are two classical limit theorems that  describe the long-term behavior of stochastic processes, with the SLLN describing the almost sure convergence of time averages and the CLT characterizing the limiting distribution of the normalized time averages. 
There are several studies establishing numerical limit theorems for specific stochastic systems, particularly the numerical SLLN and the numerical CLT. 

For SODEs with globally Lipschitz continuous coefficients, the numerical SLLN has been established for Euler-type methods (see \cite{Brehier2016, Mattingly2010}). 
The  numerical CLT is  proved in \cite{Pages2012} for the Euler--Maruyama (EM) method with decreasing step sizes, covering a broad class of Brownian ergodic diffusions. 
In \cite{Lu2022}, the authors further derive the numerical CLT together with the self-normalized Cramér-type moderate deviation principle for time-averaging estimators based on the EM method. 
For SODEs with non-globally Lipschitz continuous coefficients, the author in \cite{Jin2025} establishes both the  numerical SLLN and CLT for the backward EM method. 
In the infinite-dimensional setting, the numerical SLLN and CLT have been proved for full discretizations of stochastic parabolic equations (see \cite{Chen2023}).
For stochastic reaction–diffusion equations near the sharp interface limit, the numerical CLT for temporal discretizations is obtained in \cite{Cui2023}. In a more general setting, the authors in \cite{Pages2023} design a recursive algorithm to approximate the invariant distribution of Feller processes and prove a discretized version of the CLT presented in \cite{Bhattacharya1982}.
The authors in \cite{Chen2025} establish the numerical SLLN and CLT for numerical discretizations of time-homogeneous Markov processes with low-regularity test functionals.
\subsection{Outline}
The paper is organized as follows.
\Cref{sec_exactLIL} establishes the LIL for the general time-homogeneous Markov process, with the proof deferred to \Cref{Appendix1}.
\Cref{sec_numLIL} presents the LIL for the numerical approximation, which serves as our main theorem in this work, and introduces the extraction of the quasi-uniform subsequence as well as the construction of the predominant martingale along it.
The $L^2$- and almost sure limits of the corresponding martingale difference sequence, along with other useful estimates, are established  in \Cref{sec_martingaleProperties}. 
Detailed proofs of  both the LIL for the predominant martingale and the vanishing results of the remainder terms presented in \Cref{sec_numLIL} are then provided in \Cref{sec_ProofofChap3}.
Finally,  \Cref{sec_examples} provides illustrative examples, including SODEs and SPDEs,   demonstrating the applicability of our results to a broad class of stochastic systems. 
\subsection{General notation}
Let $(E, \|\cdot\|)$ be a real separable Banach space. We denote $\mathfrak B(E)$ the  $\sigma$-algebra of Borel subsets of $E$,  $\mathcal B_b(E)$ the set of all bounded Borel  functions, and $\mathcal P(E)$ the set of all probability measures defined on $(E,\mathfrak B(E))$.  Denote by  $\nu(f):=\int_{E}f(u)\mathrm d\nu(u)$ the integral of the measurable functional $f$ with respect to $\nu\in\mathcal P(E).$  
For fixed  constants $p\geq1$ and $\gamma\in(0,1]$,  let $\mathcal C_{p,\gamma}:=\mathcal C_{p,\gamma}(E;\mathbb R)$ denote the class of measurable functions $f:E\to\mathbb R$
such that
\begin{align}\label{5.2}
	\|f\|_{p,\gamma}:=\sup_{u\in E}\frac{|f(u)|}{1+\|u\|^{\frac{p}{2}}}+\sup_{\substack{u_1,u_2\in E\\u_1\neq u_2}}\frac{|f(u_1)-f(u_2)|}{d_{p,\gamma}(u_1,u_2)} <\infty,
\end{align}
where 
\begin{align}\label{5.1}
	d_{p,\gamma}( u_1,u_2):=(1\wedge\|u_1-u_2\|^{\gamma})(1+\|u_1\|^p+\|u_2\|^p)^{\frac{1}{2}}
\end{align}
is a quasi-metric  on $E$. 
With this definition, it follows that  $\mathcal C_{p_1,\gamma}\subset \mathcal C_{p_2,\gamma}$ whenever $p_2\geq p_1\geq1$ and  
$\mathcal C_{p,\gamma_2}\subset \mathcal C_{p,\gamma_1}$ whenever $0<\gamma_1\leq \gamma_2\leq1$.
The Wasserstein quasi-distance  associated with the quasi-metric $d_{p,\gamma}$ is given by 
\begin{align}\label{5.3}
	\mathbb W_{p,\gamma}(\nu_1,\nu_2):=\inf_{\pi\in\Pi(\nu_1,\nu_2) }\int_{E\times E}d_{p,\gamma}(u_1,u_2)\pi(\mathrm du_1,\mathrm du_2),
\end{align}
where  $\nu_1,\nu_2\in
\mathcal P_{p,\gamma}(E):=\{\nu\in\mathcal P(E):\int_{E\times E}d_{p,\gamma}(u_1,u_2)\mathrm d\nu(u_1)\mathrm d\nu(u_2)<\infty\}$ and $\Pi(\nu_1,\nu_2)$ is the set of all probability measures on $E\times E$ with marginals $\nu_1$ and $\nu_2$. 

Throughout and without any ambiguity,  $\log t$  denotes the natural logarithm $\log_e t$, $\lceil a\rceil$ and $\lfloor a \rfloor$ denote the minimal integer greater than or equal to $a$ and the maximal integer smaller than or equal to $a$, respectively, and $K$ denotes an arbitrary constant which may vary from one place to another. For real numbers $a$ and $b$, we write $a\vee{}b:=\max\{a,b\}$ and $a\wedge{}b:=\min\{a,b\}$.
\section{LIL for the time-homogeneous Markov process} \label{sec_exactLIL}
We consider the $E$-valued  time-homogeneous  Markov process $\{X_t\}_{t\geq0}$ defined on  a filtered probability space $( \Omega,\mathcal{F},\{\mathcal F_t\}_{t\ge 0},\PP )$. The  expectation with respect to $\PP$ is denoted by  $\E$.  For each $x\in E$, denote by $\{X_t^x\}_{t\geq0}$ the process with initial condition $X_0=x$, and for any fixed $s\geq0$, denote by $\{X^{x}_{s,t}\}_{t\geq s}$  the process with initial condition $X_{s}=x$. For any $t\geq0$, denote by $\mu_t^x(\rd y)$  
the probability distribution generated by  $X_t^{x}$, i.e., for any $A\in\mathfrak{B}(E)$,
$\mu_t^x(A)=\PP\big\{\omega\in\Omega: X_{t}^{x}\in A\big\}$. 

Using the time-homogeneity of $\{X_t\}_{t\geq0}$, for any $x\in E$  and $0 \leq s\leq t$, we define the linear operator $P_{s, t}: \mathcal B_b(E) \rightarrow \mathcal B_b(E)$ as
$$
 P_{s, t} f(x):=\E\big[f(X_{s, t}^x) \big]=\int_{E} f(u)\mu^x_{t-s}(\rd u), \qquad f\in\mathcal B_b(E).
$$
In particular, when $s=0$, we denote $P_{t}:= P_{0, t}$ for simplicity. 
With these notations, we impose the following assumption on the time-homogeneous Markov process $\{X^x_t\}_{t\geq 0}$.

 \begin{assp}\label{a1}
 Assume that the time-homogeneous  Markov process $\{X^x_t\}_{t\ge 0}$ satisfies the following conditions.
\begin{itemize}
\item[(\romannumeral 1)] There exist  constants $r\ge 2,\tilde r\ge 1$, and $L_1>0$ such that for any $x\in E$,
\begin{align*}
&\sup_{t\geq0}\E [\|X^{x}_t\|^{r}]\leq L_1(1+\|x\|^{\tilde rr}).
\end{align*}
\item[(\romannumeral 2)] There exist constants $\gamma_1\in(0,1]$, $\beta\in[0,r-1],\, L_2>0$,  and a  continuous function $\rho:[0,+\infty)\to[0,+\infty)$  with $\int_{0}^{\infty} (\rho(t))^{\gamma_1} \mathrm dt<\infty$  and $\sum^\infty_{k=1} (\rho(k))^{\gamma_1} <\infty$ 
such that for any $x,y\in E,$
\begin{align*}
\big(\E[\|X^{x}_{t}-X^{y}_{t}\|^2]\big)^{\frac{1}{2}}\leq L_2\|x-y\|(1+\|x\|^{\beta}+\|y\|^{\beta})\rho(t).
\end{align*}
\item[(\romannumeral 3)] There exist constants $l\in[0, r]$ and $\tilde{l}\in(0, 1]$ such that for $t\geq 0$,
$$
\big(\E[\|X^{x}_{t}-x\|^2]\big)^{\frac{1}{2}}\leq  L_3 (1+\|x\|^l) t^{\tilde{l}}.
$$
\end{itemize}
 \end{assp} 
 Under \Cref{a1},  the existence and uniqueness of the invariant measure are ensured, and the following LIL for $\{X^x_t\}_{t\geq 0}$ holds. The proof is provided in  \Cref{Appendix1}.
 \begin{prop}\label{thm_exactLIL} 
 Let \Cref{a1} hold,  $p\in [1,r]$ 
 such that $2p\tilde{r} +4(1+\beta)\gamma_1\leq r$ and $2\tilde{r}(p\tilde{r}+(2+3\beta)\gamma_1) \leq r$. Then $\{X^x_t\}_{t\ge 0}$ admits a unique invariant measure $\mu\in\mathcal P(E),$ and fulfills the LIL: For any $f\in\mathcal C_{p,\gamma_1},$ it holds that
 \begin{align*}
\limsup_{t\to \infty} \frac{ \int^t_0 (f(X^x_s)-\mu(f)) \rd s}{\sqrt{2t\log\log t} } =v,\quad \text{a.s.} \quad\text{and}\quad 
 \liminf_{t\to \infty} \frac{ \int^t_{0} (f(X^x_s)-\mu(f)) \rd s}{\sqrt{2t\log\log t}  } = -v, \quad   \text{a.s.,}
 \end{align*}
 where $v:= \sqrt{2\mu\big((f-\mu(f))\int_{0}^{\infty}(P_{t}f-\mu(f))\mathrm dt\big) }$. 
 \end{prop}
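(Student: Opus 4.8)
The plan is to reduce the statement to a martingale law of the iterated logarithm via a corrector (Poisson-equation) decomposition, in the spirit of \cite{Heyde1973,Bhattacharya1982}. First, from Assumptions~(i)--(ii) I would extract a quantitative ergodic rate: splitting the quasi-metric $d_{p,\gamma_1}(u_1,u_2)=(1\wedge\|u_1-u_2\|^{\gamma_1})(1+\|u_1\|^p+\|u_2\|^p)^{1/2}$ into its Hölder factor and its moment factor and applying Hölder's inequality to the coupling $(X^x_t,X^y_t)$, Assumption~(ii) gives
\[
\E\,d_{p,\gamma_1}(X^x_t,X^y_t)\le K\big(1+\|x\|^{a}+\|y\|^{a}\big)(\rho(t))^{\gamma_1},
\]
where the admissibility of the exponent $a$ — i.e.\ that every moment invoked is of order at most $r$ — is exactly what the conditions $2p\tilde r+4(1+\beta)\gamma_1\le r$ and $2\tilde r(p\tilde r+(2+3\beta)\gamma_1)\le r$ guarantee. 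Since $\sum_k(\rho(k))^{\gamma_1}<\infty$ forces $\rho(t)\to0$, coupling the evolutions started from $x$ and from $X^x_h$ shows that $(\mu^x_t)_{t\ge0}$ is Cauchy in $\mathbb W_{p,\gamma_1}$, hence converges to a limit $\mu\in\mathcal P_{p,\gamma_1}(E)$ that is readily seen to be the unique invariant measure, and one obtains the decay estimate $|P_tf(x)-\mu(f)|\le K\|f\|_{p,\gamma_1}(1+\|x\|^{b})(\rho(t))^{\gamma_1}$ for $f\in\mathcal C_{p,\gamma_1}$.

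Next, since $\int_0^\infty(\rho(t))^{\gamma_1}\rd t<\infty$, the corrector $\hat f(x):=\int_0^\infty\big(P_tf(x)-\mu(f)\big)\rd t$ is well defined with polynomial growth $|\hat f(x)|\le K(1+\|x\|^{b})$, and by Chapman--Kolmogorov it solves $P_s\hat f-\hat f=-\int_0^s(P_tf-\mu(f))\rd t$. Inserting this into the Markov property shows that
\[
N_t:=\hat f(X^x_t)-\hat f(x)+\int_0^t\big(f(X^x_s)-\mu(f)\big)\rd s
\]
is an $\{\mathcal F_t\}$-martingale with $N_0=0$, so that $\int_0^T(f(X^x_s)-\mu(f))\rd s=N_T-\hat f(X^x_T)+\hat f(x)$. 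Using~(i) and the growth of $\hat f$, a Borel--Cantelli argument along integers gives $\hat f(X^x_n)=o(\sqrt n)$ a.s., while a second Borel--Cantelli applied to the increments $\int_n^{n+1}|f(X^x_s)-\mu(f)|\rd s$ shows that the fluctuation of the time integral over $[n,n+1]$ is $o(\sqrt{n\log\log n})$; together these reduce the assertion to the LIL for the discrete-time martingale $(N_n)_{n\in\N}$. (Assumption~(iii) provides the short-time regularity used at auxiliary technical steps.)

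Finally, I would invoke Heyde's martingale LIL~\cite{Heyde1973}. Writing $\xi_k:=N_k-N_{k-1}$, the Markov property gives $\E[\xi_k^2\mid\mathcal F_{k-1}]=g(X^x_{k-1})$ for an explicit $g$ which, by the comparison estimate of the first step, belongs to some class $\mathcal C_{p',\gamma_1}$; the strong law for the chain $(X^x_k)_k$ — itself obtained from a corrector decomposition applied to $g-\mu(g)$ — then gives $\frac1n\sum_{k=1}^n g(X^x_{k-1})\to\mu(g)$ a.s., and a stationary computation identifies $\mu(g)=\lim_n n^{-1}\E_\mu[N_n^2]=2\mu\big((f-\mu(f))\hat f\big)=v^2$. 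A uniform bound on $\E|\xi_k|^q$ for some $q>2$ yields the Lindeberg condition. Heyde's theorem then gives $\limsup_n N_n/\sqrt{2n\log\log n}=v$ and $\liminf_n N_n/\sqrt{2n\log\log n}=-v$ a.s. (the degenerate case $v=0$ being immediate from the previous step), and combining with the reduction above — together with $\sqrt{2T\log\log T}\sim\sqrt{2n\log\log n}$ for $T\in[n,n+1]$ — completes the proof.

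I expect the main obstacle to be the moment bookkeeping running through all of these steps: the test function $f\in\mathcal C_{p,\gamma_1}$ must be pushed through the Wasserstein estimate, then through the corrector $\hat f$, and finally through the conditional-variance function $g$, with each stage raising polynomial exponents, while everything must remain controlled by the single moment bound of order $r$ in Assumption~(i). The two inequalities $2p\tilde r+4(1+\beta)\gamma_1\le r$ and $2\tilde r(p\tilde r+(2+3\beta)\gamma_1)\le r$ are precisely the budget making this close, and verifying that the Hölder-continuity estimates survive the passage to $g$ — which is what legitimizes applying the strong law to $g$ — is the most delicate point.
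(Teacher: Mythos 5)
Your overall architecture coincides with the paper's: the corrector martingale $N_n=\hat f(X^x_n)-\hat f(x)+\int_0^n\big(f(X^x_s)-\mu(f)\big)\rd s$ is exactly the paper's $\mathscr M^{x}_{n}$ written in conditional-expectation form, the two boundary terms are the paper's $\mathscr R^{x}_{\lfloor t\rfloor}$ and $R^x_t$ and are disposed of by the same Borel--Cantelli arguments, and the conclusion is drawn from Heyde's martingale LIL (\Cref{lemma_Heyde1973}) exactly as in \Cref{Appendix1}. The genuine divergence --- and the gap --- is in how you verify the quadratic-variation condition \eqref{equ_LILCondition2}. First, the criterion as invoked requires $S_n^{-2}\sum_{k\le n}Z_k^2\to1$ a.s.\ for the \emph{raw squares}, whereas your sketch only produces an SLLN for the conditional variances $g(X^x_{k-1})=\E[\xi_k^2\mid\mathcal F_{k-1}]$; you still need $\frac1n\sum_{k}\big(\xi_k^2-\E[\xi_k^2\mid\mathcal F_{k-1}]\big)\to0$ a.s., which is obtainable from $\sup_k\E[\xi_k^4]<\infty$ via a martingale SLLN but is absent from your argument.

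Second, and more seriously, the proposed route to $\frac1n\sum_k g(X^x_{k-1})\to\mu(g)$ --- ``a corrector decomposition applied to $g-\mu(g)$'' --- is not supported by the stated hypotheses. The conditional-variance function $g$ (the paper's $H$) only lies in $\mathcal C_{\bar p_{\gamma_1},\gamma_1}$ with $\bar p_{\gamma_1}=2\tilde r\big(p\tilde r+(2+3\beta)\gamma_1\big)$, so its corrector grows like $\|x\|^{\frac{\bar p_{\gamma_1}\tilde r}{2}+(1+\beta)\gamma_1}=\|x\|^{\tilde r^2(p\tilde r+(2+3\beta)\gamma_1)+(1+\beta)\gamma_1}$; the assumption $2\tilde r\big(p\tilde r+(2+3\beta)\gamma_1\big)\le r$ does not control moments of this order once $\tilde r>2$, and even for $\tilde r\le 2$ it leaves no room for the exponent $c>1$ needed in a Borel--Cantelli/martingale-SLLN argument. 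The paper avoids this second corrector entirely: in \Cref{prop_exactalmostsureZ} it proves $\frac1n\sum_k(\mathcal Z^x_k)^2\to v^2$ a.s.\ by Birkhoff's ergodic theorem under $\mu$ combined with a continuity-in-the-initial-condition argument for the functionals $\Lambda_1,\Lambda_2$ (in the style of Bolt and Wu), which only requires the first-moment coupling bound on $\mathcal Z^x_k-\mathcal Z^y_k$ and therefore stays inside the moment budget $2\tilde r(p\tilde r+(2+3\beta)\gamma_1)\le r$. To make your route work you would either have to strengthen the moment assumptions or replace the $g$-corrector step by this Birkhoff/continuity argument.
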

 Here we provide several important estimates related to the Wasserstein quasi-distance, which will be frequently used throughout this work.
 Note  that  
 if $\nu_1(\|\cdot\|^{p})<\infty $ and $\nu_2(\|\cdot\|^{p})<\infty $ with $p\geq 1$, then for $\gamma\in (0, 1]$, 
 \begin{align*}
 	\mathbb W_{p,\gamma}(\nu_1,\nu_2)\nn
 	&\leq  \big(1+\nu_1(\|\cdot\|^{p})+\nu_2(\|\cdot\|^{p})\big)^{\frac{1}{2}} \Big(\inf_{\pi\in\Pi(\nu_1,\nu_2) }\int_{E\times E}(1\wedge\|u_1-u_2\|^{2\gamma})\pi(\mathrm du_1,\mathrm du_2)\Big)^{\frac{1}{2}}\nn\\
 	&\leq \big(1+\nu_1(\|\cdot\|^{p})+\nu_2(\|\cdot\|^{p})\big)^{\frac{1}{2}}(\mathbb W_{2}(\nu_1,\nu_2))^{\gamma},
 \end{align*}
 where  \eqref{5.1} and the H\"older inequality are used, and $\mathbb{W}_2$ is a bounded-Wasserstein distance defined by 
 \begin{align*}
 	\mathbb{W}_2(\nu_1, \nu_2):=\Big(\inf_{\pi\in\Pi(\nu_1, \nu_2)}\int_{E\times E}(1\wedge\|u_1-u_2\|^2)\pi(\mathrm du_1,\mathrm du_2)\Big)^{\frac{1}{p}}.
 \end{align*}
 This leads to that for any $f\in\mathcal C_{p,\gamma}$ with $p\geq 1$ and $\gamma\in (0, 1]$, 
 \begin{align}\label{distanceOfTwoMeasure}
 	|\nu_1(f)-\nu_2(f)|&=\inf_{\pi\in \Pi(\nu_1,\nu_2)}\Big|\int_{E\times E}(f(u_1)-f(u_2))\pi(\mathrm du_1,\mathrm du_2)\Big|\nn\\
 	&\leq
 	\|f\|_{p,\gamma}\mathbb W_{p,\gamma}(\nu_1,\nu_2)\leq \|f\|_{p,\gamma} \big(1+\nu_1(\|\cdot\|^{p})+\nu_2(\|\cdot\|^{p})\big)^{\frac{1}{2}}(\mathbb W_{2}(\nu_1,\nu_2))^{\gamma}.
 \end{align}
With the estimate between measures given in \eqref{distanceOfTwoMeasure}, we now show that the limit constant $v$ in \Cref{thm_exactLIL} is well-defined. 
Indeed, the conditions in \Cref{thm_exactLIL} imply that $\frac{p}{2} (1+\tilde{r}) +(1+\beta) \gamma_1 \leq r$, which ensures that $v^2$ is bounded  as follows 
	\begin{align*}
		v^2 &=2\mu\Big((f-\mu(f))\int_{0}^{\infty}(P_{t}f-\mu(f))\mathrm dt\Big) \\ 
		&\leq K \mu\Big(\big[\|f\|_{p,\gamma_1} (1+\|\cdot\|^{\frac p2})\big] \int^\infty_{0} \big[\|f\|_{p,\gamma_1} \big(1+ \|\cdot\|^{\frac{p\tilde{r}}{2}+(1+\beta)\gamma_1} \big)(\rho(t))^{\gamma_1} \big] \ \rd t \Big)   \\ 
		&\leq K\|f\|^2_{p,\gamma_1} \mu\big(1+\|\cdot\|^{\frac p2(1+\tilde{r})+(1+\beta)\gamma_1}\big) 
		\leq   K\|f\|^2_{p,\gamma_1}, \quad f\in \mathcal{C}_{p,\gamma_1}.
	\end{align*}
Here, we  make use of the integrability of $(\rho(t))^{\gamma_1}$,   the estimate 
	\begin{align} \label{equ_apprExactPttoMu}
		\big| P_{t}f(x)-\mu(f) \big|  &\leq \|f\|_{p,\gamma_1} \big(1+  \E\big[\|X_{t}^{x}\|^p\big] +\mu(\|\cdot\|^{p})\big)^{\frac{1}{2}}(\mathbb W_{2}(\mu_{t}^{x},\mu))^{\gamma_1}\nn \\
		&\leq  K \|f\|_{p,\gamma_1} \big(1+ \|x\|^{\frac{p\tilde{r}}{2}+(1+\beta)\gamma_1} \big)(\rho(t))^{\gamma_1}, \quad x\in E, 
	\end{align}
and the fact that for any  $\bar{r}\leq r$,
\begin{align} \label{equ_boundOfNormOfmu}
	\mu(\| \cdot \|^{\bar{r}}) \leq \lim_{t\rightarrow\infty}\Big( \int_{E} \| u\|^r \mu_t^{\bf 0} ( \rd u) \Big)^{\bar{r}/r} \leq \Big(\sup_{t\geq 0} \E[ \| X^{\bf 0}_t \|^r] \Big)^{\bar{r}/r} \leq L_4,
\end{align}
as implied by Assumption \ref{a1}.

\section{LIL for  the non-homogeneous approximation}\label{sec_numLIL}
\subsection{Construction of the numerical approximation}
We define the grid points $\{ t_n := \sum^n_{k=0} \tau_k\}_{n\in \mathbb{N}}$  with $t_0=0$, where  $\tau := \{\tau_k\}_{k \in \mathbb{N}}$ is the time-step sequence. 
Assume that,  for all $k \in \mathbb{N}^{+}$, 
\begin{align*}
	0 < \tau_k  \leq \overline{\tau}:= \sup_{k \in \mathbb{N}^{+}} \tau_k, \quad \lim\limits_{k \to  \infty} \tau_k = 0, \quad   \lim\limits_{n \to  \infty}t_n= \lim\limits_{n \to  \infty} \sum^n_{k=1} \tau_k= \infty.
\end{align*}   
For each given $\tau$, let $\{Y^{\tau}_{t_k}\}_{k\in\mathbb N}$ denote the approximation of $\{X_t\}_{ t\geq0}$.  Then $\{Y^{\tau}_{t_k}\}_{k\in\mathbb N}$ is a non-homogeneous Markov chain.
For each $x\in E$, denote by $\{Y^{x, \tau}_{t_k}\}_{k\in\mathbb N}$ the  chain with initial condition $Y^{\tau}_{t_0}=x$ and for any fixed $i\in\N$, denote by $\{Y^{x, \tau}_{t_i,t_j}\}_{j\geq i}$  the chain with initial condition $Y^{\tau}_{t_i}=x$.
For any $x\in E$ and $m,n \in\N$ with $m \leq n$, we denote by $\mu^{x, \tau}_{t_m,t_n}(\rd y)$ the 
 probability distribution generated by $Y^{x,\tau}_{t_m,t_n}$, i.e., for any $A\in \mathfrak{B}(E)$, $	\mu^{x, \tau }_{t_m,t_n}(A)= 	\mathbb{P}\{\omega \in \Omega: Y^{x, \tau}_{t_m, t_{n}} \in  A\}.$
For any $x\in E$  and $m,n\in\N^+$ with $m\leq n$, we define the linear operator $P_{t_m, t_n}^{\tau}: \mathcal{B}_b(E) \rightarrow \mathcal{B}_b(E)$ as
$$
P_{t_m, t_n}^{\tau} f(x):=\E\big[f(Y^{x, \tau}_{t_m,t_n}) \big]=\int_{E} f(u)\mathrm \mu^{x, \tau}_{t_m, t_n}(\rd u), \qquad f\in\mathcal{B}_b(E).
$$
In particular, when $m=0$, we use the notation $\mu^{x,\tau}_{t_n}:= \mu^{x,\tau}_{t_0, t_n}$ and $P_{t_n}^{\tau} := P^\tau_{t_0, t_n}$ for simplicity.

\subsection{LIL for the non-homogeneous approximation}
We impose the following assumptions on the non-homogeneous approximation $\{Y^{x, \tau}_{t_k}\}_{k\in\mathbb N}$. 
 
 \begin{assp}\label{a2}
Assume that $\{Y^{x,\tau}_{t_k}\}_{k\in\mathbb N}$ satisfies  the following conditions.
\begin{itemize}
\item[\textup{(i)}] 
There exist  constants $q\ge 2,\tilde q\ge 1$, and $L_4>0$ such that
\begin{align*}
\sup_{k\geq m}\E [\|Y^{x,\tau}_{t_m, t_{k}}\|^{q}]\leq L_4 (1+\|x\|^{\tilde qq})
\end{align*}
for any fixed $m\in \N$.

\item[\textup{(ii)}] There exist constants 
$\kappa \in[0,q-1], L_5>0$, and a    function $\rho^{\tau}:[0,+\infty)\to[0,+\infty)$ 
 such that for any $x, y\in E$, $m,n \in\N$ with $n\geq m$,
\begin{align*}
\big(\E[\|Y^{x,\tau}_{t_m,t_n}-Y^{y,\tau}_{t_m,t_n}\|^2]\big)^{\frac{1}{2}}&\leq L_5\|x-y\|(1+\|x\|^{\kappa}+\|y\|^{\kappa})\rho^{\tau}(t_n-t_m).
\end{align*}
\end{itemize}
 \end{assp} 

\begin{assp}\label{a3}
Assume that there exist $\a\in\mathbb R_+$ and  $L_6>0$ such that for any $x\in E$ and $m,n \in \mathbb{N}^+$ such that $n\geq m$,  
 $$
 \big(\E[\|X^{x}_{t_m, t_{n}}-Y^{x,\tau}_{t_m, t_n}\|^2]\big)^{\frac{1}{2}}\leq L_6(1+\|x\|^{\tilde r\vee \tilde q}) 
 \tau_n^{\alpha}.$$
\end{assp}

Under Assumptions \ref{a1}--\ref{a3}, we prove that 
the numerical approximation $\{Y^{x,\tau}_{t_k}\}_{k\in\mathbb N}$ preserves 
the LIL of the underlying Markov process, 
characterizing the almost sure maximum possible fluctuation of the numerical time averages around the ergodic limit of  $\{X^{x}_{t}\}_{t\geq 0}$. The result is presented in the following theorem. 
For simplicity, we denote $\tilde{d}:=(1+\beta)\vee\tilde{r}\vee\tilde{q}$ throughout this paper. 
\begin{theorem}\label{thm_numerLIL} 
Let  Assumptions \ref{a1}--\ref{a3} hold. Suppose that there exist constants  $p\in [1,r \wedge \frac q4]$ and $\gamma\in[\gamma_1,1]$ such that  $\frac p2(1+\tilde{r})+(1+\beta)\gamma_1 \leq r$, $4p\tilde{q}+8\tilde{d}\gamma\leq q$, $\frac p2+ ([(\frac{p}{2}+\gamma )(\tilde{r} \vee \tilde{q})]\vee [( \frac{p}{2} + l \gamma)\tilde{r}])\leq r$, $\frac p2+\frac{p\tilde{q}}{2}+\gamma (\tilde{d} \vee \kappa) \leq r \wedge q$, and $2\tilde{q}[\frac p2+\frac{p\tilde{q}}{2}+\gamma (\tilde{d} \vee \kappa)]+ 4\tilde{d}\gamma \leq q$. 
If the step-size sequence $\tau$ satisfies the following conditions:
  \begin{itemize}
	\item[(\romannumeral 1)] $ \sum^{\infty}_{k=1}  \tau_{k}^{1+ \gamma \alpha } <\infty,$ \\ \vspace{-3mm} 
	\item[(\romannumeral 2)] $\sup_{k\in\N} \sum^\infty_{i=k}  \tau_i(\rho(t_i-t_{k}))^\gamma  <\infty$,  \\ \vspace{-3mm}
	\item[(\romannumeral 3)] $\sup_{k\in\N} \sum^\infty_{i=k}  \tau_i(\rho^\tau(t_i-t_{k}))^\gamma  <\infty$,  \\ \vspace{-3mm}
	\item[(\romannumeral 4)] $\lim_{n\to \infty} (\sum^n_{k=1} \tau_k)^{-1}  \sum^{n}_{k=1} ( \tau_{k-1}  ( \sum^\infty_{i=k-1}  \tau_i^{\gamma \alpha} \tau_{i+1} +  \sum^\infty_{i=k} \tau_i^ {1+\tilde{l}\gamma} ) )=0$,
\end{itemize}
then, 
for any $x\in E$ and $f\in \mathcal{C}_{p,\gamma}$,  the numerical approximation 
$\{Y^{x,\tau}_{t_k}\}_{k\in\N}$ obeys the LIL. Specifically, it holds that
	$$
\limsup_{n\to \infty} \frac{\sum^n_{k=1} \tau_k (f(Y^{x,\tau}_{t_k})-\mu(f)) }{\sqrt{2 t_{n} \log \log t_{n}}}  =v, \quad \text{a.s.} $$
and
$$
  \liminf_{n\to \infty} \frac{\sum^n_{k=1} \tau_k (f(Y^{x,\tau}_{t_k})-\mu(f))  }{\sqrt{2 t_{n} \log \log t_{n}}}  = -v, \quad \text{a.s.}
$$
\end{theorem}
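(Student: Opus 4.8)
The plan is to reduce the LIL for the numerical approximation to a LIL for a suitably constructed martingale, following the classical Poisson-equation (Gordin-type) decomposition but adapted to the non-homogeneous, non-uniform-step setting. First I would split the target sum as
\begin{align*}
\sum_{k=1}^{n}\tau_k\bigl(f(Y^{x,\tau}_{t_k})-\mu(f)\bigr)
= \mathscr{M}^{x,\tau}_{n} + \mathscr{R}^{x,\tau}_{n},
\end{align*}
where $\mathscr{M}^{x,\tau}_{n}$ is a martingale (with respect to the filtration generated by the chain) built from the solution of a discrete Poisson-type equation associated with the transition operators $P^\tau_{t_{k-1},t_k}$, and $\mathscr{R}^{x,\tau}_{n}$ collects the remainder: telescoping boundary terms, the error from replacing $\mu(f)$ by $P^\tau_{t_k,\cdot}f$ evaluated at the ergodic limit, and the discretization error quantified by Assumption \ref{a3}. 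The natural candidate for the corrector is $g^\tau_k(x):=\sum_{i\geq k}\tau_{i+1}\bigl(P^\tau_{t_k,t_{i+1}}f(x)-\mu(f)\bigr)$, whose summability is exactly what conditions (ii)--(iii) on $\tau$, combined with the attractiveness estimates \eqref{distanceOfTwoMeasure} and the analogue of \eqref{equ_apprExactPttoMu} for $P^\tau$, are designed to guarantee; the moment bounds in Assumptions \ref{a1}(i) and \ref{a2}(i) together with the stated exponent inequalities on $p,\gamma,\tilde d$ then control all the weight factors $(1+\|\cdot\|^{\text{power}})$ uniformly.

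Next I would handle the remainder term $\mathscr{R}^{x,\tau}_{n}$, showing $\mathscr{R}^{x,\tau}_{n}/\sqrt{2t_n\log\log t_n}\to 0$ almost surely. This uses condition (i) $\sum\tau_k^{1+\gamma\alpha}<\infty$ to dominate the accumulated discretization error (each step contributes $O(\tau_k^\alpha)$ by Assumption \ref{a3}, weighted by $\tau_k$, giving an a.s. convergent series after a Borel--Cantelli / Kronecker argument), condition (iv) to control the cross terms arising from the time-inhomogeneity of the corrector (the difference between $g^\tau_{k}$ evaluated along consecutive steps, where Assumption \ref{a1}(iii) with exponent $\tilde l$ enters through the $\tau_i^{1+\tilde l\gamma}$ sums), and the uniform boundedness of $g^\tau_k$ to make the telescoping boundary term $o(\sqrt{t_n})$. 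The point is that after subtracting the martingale, everything left is either absolutely summable (so $o(\sqrt{t_n\log\log t_n})$ trivially) or a Cesàro-vanishing quantity by (iv).

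The core of the argument is the LIL for the martingale part, and here the non-uniform grid forces the key device mentioned in the introduction: extract a quasi-uniform subsequence $\{\tilde t_N\}_{N\in\mathbb N}$ from $\{t_n\}$ — roughly, let $\tilde t_N$ be the grid point closest to $N\delta$ for a fixed small $\delta$, so that consecutive gaps $\tilde t_{N+1}-\tilde t_N$ are comparable — and aggregate the martingale increments between consecutive $\tilde t_N$'s into a new martingale difference sequence $\tilde{\mathcal Z}^{x,\tau}_{N}$, giving the predominant martingale $\tilde{\mathscr M}^{x,\tau}_N=\sum_{M\le N}\tilde{\mathcal Z}^{x,\tau}_M$. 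To apply Heyde's martingale LIL criterion \cite{Heyde1973} I need two inputs, which are precisely \Cref{prop_ZkSquareMoment} and \Cref{prop_almostsureZk}: (a) $\sum_{M\le N}\E[(\tilde{\mathcal Z}^{x,\tau}_M)^2]\sim v^2\,\tilde t_N$, i.e.\ the conditional variance grows linearly with the quasi-uniform clock with slope $v^2$ (where $v$ is the same constant as in \Cref{thm_exactLIL}, identified via the Poisson equation for the \emph{continuous} semigroup and Assumption \ref{a3} to pass from $P^\tau$ to $P$), and (b) the a.s.\ convergence $\tilde t_N^{-1}\sum_{M\le N}\E\bigl[(\tilde{\mathcal Z}^{x,\tau}_M)^2\mid\mathcal F_{\tilde t_{M-1}}\bigr]\to v^2$, obtained by fourth-moment estimates on the deviation combined with Borel--Cantelli along the quasi-uniform grid. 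Heyde's theorem then yields $\limsup\tilde{\mathscr M}^{x,\tau}_N/\sqrt{2v^2\tilde t_N\log\log\tilde t_N}=1$ and the matching $\liminf=-1$; finally one transfers this back from $\tilde t_N$ to $t_n$ using quasi-uniformity (so $\log\log\tilde t_N\sim\log\log t_n$ and the gaps between $\tilde t_N$ and the ambient $t_n$ are negligible after normalization) and adds back the $o(\sqrt{t_n\log\log t_n})$ remainder.

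I expect the main obstacle to be step (b), the almost-sure limit of the normalized conditional variances, exactly as the introduction flags. In the time-homogeneous continuous case one exploits the semigroup and a unified functional representation of the martingale differences; here no such representation exists because each $\tilde{\mathcal Z}^{x,\tau}_M$ depends on $M$ through a block of distinct transition operators with distinct step sizes. The workaround is to estimate high-order (at least fourth) moments of $\tilde t_N^{-1}\sum_{M\le N}\bigl(\E[(\tilde{\mathcal Z}^{x,\tau}_M)^2\mid\mathcal F_{\tilde t_{M-1}}]-\E[(\tilde{\mathcal Z}^{x,\tau}_M)^2]\bigr)$ directly via the attractiveness and moment assumptions — this is where the more stringent exponent conditions like $2\tilde q[\tfrac p2+\tfrac{p\tilde q}{2}+\gamma(\tilde d\vee\kappa)]+4\tilde d\gamma\le q$ are consumed — and then invoke Borel--Cantelli along a polynomially spaced subsequence of $N$'s, filling in the gaps by monotonicity. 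Making the block-decomposition bookkeeping uniform in the initial point $x$ (all constants depending polynomially on $\|x\|$ but with exponents kept below $r\wedge q$) is the secondary technical burden.
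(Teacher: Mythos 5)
Your proposal follows essentially the same route as the paper: the same infinite-horizon (Gordin/Poisson-type) martingale-plus-remainder decomposition, the same extraction of a quasi-uniform subsequence with blockwise martingale differences, Heyde's criterion verified through the $L^2$- and almost sure limits of the squared differences obtained via high-order moment estimates and Borel--Cantelli, and the same transfer back to the original grid after showing the remainders are $o(\sqrt{t_n})$. The only cosmetic deviation is that you phrase the almost-sure input in terms of conditional variances, whereas the paper (matching the form of Heyde's criterion as stated) proves $\tilde{t}_N^{-1}\sum_{k\le N}|\tilde{\mathcal Z}^{x,\tau}_k|^2\to v^2$ a.s.\ directly; this does not change the substance of the argument.
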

\begin{rem}
	If the step-size sequence $\tau$ is strictly decreasing, i.e., $\tau_{k+1}\leq \tau_{k}$ for all $k\in \N^+$, then condition 
	 (\romannumeral 4) can be replaced by 
	\begin{align} \label{equ_equalCondition4}
	\lim_{n\to \infty} (\sum^n_{k=1} \tau_k)^{-1}  \sum^{n}_{k=1} ( \tau_{k-1}  \sum^\infty_{i=k-1}  \tau_i^{(1+\gamma \alpha)\wedge(1+\tilde{l}\gamma)} )=0.
	\end{align}
For example, when $\tau_k=\frac 1k$ for $k\in \N^+$,   \eqref{equ_equalCondition4} follows naturally from the fact that $$ \sum^\infty_{i=k-1}  \tau_i^{(1+\gamma \alpha)\wedge(1+\tilde{l}\gamma)} \leq K \tau_{k-1}^\delta$$ for some $\delta> 0$.
\end{rem}
The proof of \Cref{thm_numerLIL} is based on extracting a quasi-uniform subsequence  from the original time grids that  behave analogously to  integer points
and constructing a predominant martingale along it. 
The argument is then completed by establishing the LIL for this predominant martingale and showing that the remainder terms vanish asymptotically.

For any $k\in\N$,  let $n_{(k)} \in \N$ be the index such that $t_{n_{(k)}} \leq k < t_{n_{(k)}+1}$.  We define  $\{\tilde{t}_k\}_{k\in\N}$ by $\tilde{t}_k= t_{n_{(k)}}$ for each $k$ and define the corresponding time-step sequence by $\tilde{\tau}:=\{ \tilde{\tau}_k\}_{k\in\N}$ with $\tilde{\tau}_0=0$ and $\tilde{\tau}_k= \tilde{t}_k- \tilde{t}_{k-1}$ for $k\in\N^+$. It is clear that $\{\tilde{t}_k\}_{k\in\N}$  is a subsequence of $\{t_k\}_{k\in\N}$ and
 $\tilde{\tau}_k=\sum^{n_{(k)}}_{i=n_{(k-1)}+1} \tau_i$ for $k\in\N^+$. 
With this definition, we also have 
\begin{align} \label{equ_tildetOneStepBound}
	\sup_{k\in\N} | t_{n_{ (k)}-1} -t_{n_{(k-1)}-1}|
	\leq \sup_{k\in\N} ( k -(k-1 -2\bar{\tau} ) )= 1+2\bar{\tau}.
\end{align}
Moreover, for  any $k_0\in\N^+$ such that $k_0>\bar{\tau}$ and any  $\varepsilon>0$, we obtain the following result: 
\begin{align} \label{equ_sumofTildeT}
	\sum^\infty_{k=k_0} (\tilde{t}_k)^{-(1+\varepsilon)} \leq \sum^\infty_{k=k_0} (k-\bar{\tau})^{-(1+\varepsilon)} <\infty.
\end{align}
In fact, the subsequence  $\{\tilde{t}_k\}_{k\in \N}$ defined  above  is a quasi-uniform time-grid subsequence.

With this extraction of the subsequence of time grids, we next construct a predominant martingale along it.
Namely, we decompose the sum $	\sum_{i=1}^{k}\tau_i (f(Y^{x,\tau}_{t_i})-\mu(f))$ into three parts: the difference between the original sequence and the extracted subsequence, 
a predominant martingale along the subsequence, and a remainder  term along the subsequence.
To locate the nearest point in the extracted subsequence smaller than $t_k$ for $k\in\N$, we denote by $\tilde{k} \in \N$  the index such that  $\tilde{t}_{\tilde{k}} \leq t_k < \tilde{t}_{\tilde{k}+1}$, or equivalently, $ t_{n_{(\tilde{k})}} \leq t_k < t_{n_{(\tilde{k}+1)}}$.  With this notation, we introduce the decomposition  for $k\in\N^+$,
\begin{align*} 
&\quad 	\sum_{i=1}^{k}\tau_i \big(f(Y^{x,\tau}_{t_i})-\mu(f)\big)  \\
	&= \Big(	\sum_{i=1}^{k}\tau_i \big(f(Y^{x,\tau}_{t_i})-\mu(f)\big) -\sum^{n_{(\tilde{k})}}_{i=1} \tau_i  (f(Y^{x,\tau}_{t_i}) - \mu(f)) \Big)+ \sum^{n_{(\tilde{k})}}_{i=1} \tau_i  (f(Y^{x,\tau}_{t_i}) - \mu(f)) \\
	&=R^{x,\tau}_{k}+\tilde{\mathscr M}^{x,\tau}_{\tilde{k}} +\tilde{\mathscr R}^{x,\tau}_{\tilde{k}}, 
\end{align*}
  where  
  \begin{align} \label{equ_defR2}
  	R^{x,\tau}_{k}:= \sum^{k}_{i=n_{(\tilde{k})}+1} \tau_i  (f(Y^{x,\tau}_{t_i}) - \mu(f))
  \end{align}
  represents the remainder term between the original sequence and the extracted subsequence,
  $	\tilde{	\mathscr M}^{x,\tau}_{\tilde{k}}:=	\mathscr M^{x,\tau}_{n_{(\tilde{k})}}$ represents the predominant martingale along the subsequence $\{\tilde{t}_k\}_{k\in\N}$ with
\begin{equation}
	\begin{aligned}\label{mar_eq}
	\mathscr M^{x,\tau}_{k}&:= 
		\sum_{i=0}^{\infty} \tau_i \Big(\E\big[f(Y_{t_i}^{x,\tau})|\mathcal F_{t_{k}}\big]-\mu(f)\Big)-\sum_{i=0}^{\infty}\tau_i \Big(\E\big[f(Y_{t_i}^{x,\tau})|\mathcal F_{0}\big]-\mu(f)\Big),
	\end{aligned}
\end{equation}  and
$	\tilde{	\mathscr R}^{x,\tau}_{\tilde{k}}:=	\mathscr R^{x,\tau}_{n_{(\tilde{k})}}$ represents the remainder term along the subsequence $\{\tilde{t}_k\}_{k\in\N}$ with
\begin{equation}
	\begin{aligned} \label{equ_defR}
		\mathscr R^{x,\tau}_{k}& :=-\sum_{i=k+1}^{\infty}\tau_i \Big(\E\big[f(Y_{t_i}^{x,\tau})|\mathcal F_{t_{k}}\big]-\mu(f)\Big)+\sum_{i=0}^{\infty}\tau_i\Big(\E\big[f(Y_{t_i}^{x,\tau})|\mathcal F_{0}\big]-\mu(f)\Big).
	\end{aligned}
\end{equation}

We establish the LIL for the martingale $\{\tilde{\mathscr M}^{x,\tau}_{N}\}_{N\in\N}$, as well as the decay properties of the remainder terms, namely  $\lim_{N\to \infty} (\tilde{t}_N)^{-\frac 12} \tilde{\mathscr R}^{x, \tau}_N =0$ and $\lim_{n\to \infty} (t_n)^{-\frac 12} R^{x, \tau}_n =0$, in the following propositions, whose proofs are postponed to \Cref{sec_ProofofChap3}. 
 \begin{prop}\label{prop_LILforMartingale} 
 	Under the assumptions  in \Cref{thm_numerLIL},
 the LIL holds for  the martingale $\{ \tilde{\mathscr M}^{x,\tau}_{N}\}_{N\in\N}$ corresponding to the test function $f\in\mathcal{C}_{p, \gamma}$ and initial value $x\in E$. 
 	Specifically, we have  
 	$$
 	\limsup_{N \to \infty} \frac{\tilde{\mathscr M}^{x,\tau}_{N}}{\sqrt{2 \tilde{t}_{N} \log \log \tilde{t}_{N}  }}  = v, \quad \text{a.s.} \qquad \text{and} \qquad   \liminf_{N\to \infty} \frac{\tilde{\mathscr M}^{x,\tau}_{N} }{\sqrt{2 \tilde{t}_{N} \log \log \tilde{t}_{N}  }}  = -v, \quad \text{a.s.}
 	$$
 \end{prop}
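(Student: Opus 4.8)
The plan is to realize $\{\tilde{\mathscr M}^{x,\tau}_N\}_{N\in\N}$ as a square-integrable martingale with respect to the coarsened filtration $\{\mathcal F_{\tilde t_N}\}_{N\in\N}$ and to apply the martingale law of the iterated logarithm of Heyde \cite{Heyde1973}. Since $\mathscr M^{x,\tau}_k$ in \eqref{mar_eq} is a Doob-type martingale with respect to $\{\mathcal F_{t_k}\}_k$ (the infinite sums converging thanks to conditions (ii)--(iii) on $\tau$, which make $\sum_{i\ge k}\tau_i(\rho^\tau(t_i-t_k))^\gamma$ finite), its deterministic-index sample $\tilde{\mathscr M}^{x,\tau}_N=\mathscr M^{x,\tau}_{n_{(N)}}$ is a martingale with respect to $\{\mathcal F_{\tilde t_N}\}_N$. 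Writing $\tilde{\mathcal Z}^{x,\tau}_N:=\tilde{\mathscr M}^{x,\tau}_N-\tilde{\mathscr M}^{x,\tau}_{N-1}$ for the associated martingale difference sequence, so that $\tilde{\mathscr M}^{x,\tau}_N=\sum_{k=1}^N\tilde{\mathcal Z}^{x,\tau}_k$ and $\E[\tilde{\mathcal Z}^{x,\tau}_N\mid\mathcal F_{\tilde t_{N-1}}]=0$, Heyde's criterion needs three ingredients: (a) the deterministic variances $s_N^2:=\sum_{k=1}^N\E[(\tilde{\mathcal Z}^{x,\tau}_k)^2]$ diverge; (b) the predictable quadratic variation $V_N^2:=\sum_{k=1}^N\E[(\tilde{\mathcal Z}^{x,\tau}_k)^2\mid\mathcal F_{\tilde t_{k-1}}]$ satisfies $V_N^2/s_N^2\to1$ almost surely; (c) a (conditional) Lindeberg-type negligibility condition on the increments.

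Ingredient (a), together with the identification of the limiting constant, is supplied by \Cref{prop_ZkSquareMoment}: since $\sum_{k=1}^N\E[(\tilde{\mathcal Z}^{x,\tau}_k)^2]$ grows linearly in $\tilde t_N$ with slope $v^2$, we get $s_N^2=v^2\tilde t_N(1+o(1))$, which tends to infinity in the nondegenerate case $v>0$. Ingredient (b) is exactly the content of \Cref{prop_almostsureZk}: the time average $\tilde t_N^{-1}\sum_{k=1}^N\E[(\tilde{\mathcal Z}^{x,\tau}_k)^2\mid\mathcal F_{\tilde t_{k-1}}]$ converges almost surely to $v^2$, whence $V_N^2/s_N^2\to1$ a.s. For ingredient (c), I would extract from \Cref{a2}(i) (moment boundedness of the chain) and the representation \eqref{mar_eq}--\eqref{equ_defR}, using again the summability of the tail sums $\sum_{i\ge k}\tau_i(\rho(t_i-t_k))^\gamma$ and $\sum_{i\ge k}\tau_i(\rho^\tau(t_i-t_k))^\gamma$, a uniform higher-moment bound $\sup_N\E[|\tilde{\mathcal Z}^{x,\tau}_N|^{2+\delta}]<\infty$ for some $\delta>0$. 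Combined with the quasi-uniformity $\tilde t_N\asymp N$ (hence $s_N^2\asymp N$), this yields $\sum_N s_N^{-(2+\delta)}\E[|\tilde{\mathcal Z}^{x,\tau}_N|^{2+\delta}]\lesssim\sum_N N^{-(1+\delta/2)}<\infty$, which implies the required Lindeberg condition.

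With (a)--(c) in hand, Heyde's theorem gives
\[
\limsup_{N\to\infty}\frac{\tilde{\mathscr M}^{x,\tau}_N}{\sqrt{2s_N^2\log\log s_N^2}}=1,\qquad\liminf_{N\to\infty}\frac{\tilde{\mathscr M}^{x,\tau}_N}{\sqrt{2s_N^2\log\log s_N^2}}=-1,\quad\text{a.s.}
\]
To reach the stated normalization I would invoke the slow variation of $x\mapsto\log\log x$: from $s_N^2=v^2\tilde t_N(1+o(1))$ it follows that $\sqrt{2s_N^2\log\log s_N^2}=v\sqrt{2\tilde t_N\log\log\tilde t_N}\,(1+o(1))$, and dividing through produces $\limsup=v$, $\liminf=-v$. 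The degenerate case $v=0$, if it can occur, is treated separately and more easily: then $s_N^2=o(\tilde t_N)$, and a Borel--Cantelli argument along $\{\tilde t_N\}$ based on the same moment bounds shows $\tilde{\mathscr M}^{x,\tau}_N=o(\sqrt{\tilde t_N\log\log\tilde t_N})$ a.s., so both limits vanish.

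I expect the genuine obstacle of the whole argument to be ingredient (b), the almost sure convergence of the normalized predictable quadratic variation to $v^2$ in the non-homogeneous setting — but this is precisely \Cref{prop_almostsureZk}, established separately via transition-operator and high-order-moment estimates. Within the present proof the delicate points are rather the verification of the Lindeberg condition, which needs the moment bounds on $\tilde{\mathcal Z}^{x,\tau}_N$ to be uniform in $N$ and therefore hinges on the coupling of the step sequence $\tau$ with the regularity parameters through conditions (ii)--(iii), and the careful transfer of the random normalization $s_N^2$ to the deterministic $v^2\tilde t_N$ inside the iterated logarithm.
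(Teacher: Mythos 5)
Your proposal follows essentially the same route as the paper: realize $\{\tilde{\mathscr M}^{x,\tau}_N\}_{N\in\N}$ as a square-integrable martingale, verify the conditions of Heyde's LIL criterion (\Cref{lemma_Heyde1973}) using \Cref{prop_ZkSquareMoment} for the variance asymptotics $s_N^2\sim v^2\tilde t_N$, \Cref{prop_almostsureZk} for the almost sure quadratic-variation limit, and uniform higher-moment bounds on the increments for the remaining truncated-moment conditions (the paper uses fourth moments where you use $2+\delta$), and finally transfer the normalization from $s_N^2$ to $v^2\tilde t_N$. One small correction: \Cref{prop_almostsureZk}, and the form of Heyde's criterion the paper invokes, concern the raw squares $\tilde t_N^{-1}\sum_{k\le N}|\tilde{\mathcal Z}^{x,\tau}_k|^2$ rather than the predictable quadratic variation $\sum_{k\le N}\E[(\tilde{\mathcal Z}^{x,\tau}_k)^2\mid\mathcal F_{t_{n_{(k-1)}}}]$ as you state; this is harmless here, since the condition actually required is exactly the one the proposition delivers, but if you insisted on a criterion phrased via the predictable quadratic variation you would need an extra (martingale SLLN) step to pass between the two.
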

 \begin{prop}\label{prop_RemainTermLimit}
 	Let Assumptions \ref{a1}--\ref{a3} hold. Suppose that there exists constants $c>2$, $p\in [1, r \wedge q]$ and $\gamma\in(0, 1]$ such that 
 	$c(\frac{p\tilde{q}}{2}+\tilde{d}\gamma) \leq q$. If the step-size sequence $\tau$ satisfy  $\sum^\infty_{i=1} \tau_i^{1+\gamma\alpha} <\infty$ and $\sup_{k\in\N} \sum^\infty_{i=k} \tau_i(\rho(t_i-t_k))^\gamma<\infty$,  
 	then for any $x\in E$ and $f\in C_{p,\gamma}$, we have
 	\begin{align*}
 		\lim_{N\to \infty}	\frac{1}{( \tilde t_{N})^{\frac 12}} \tilde{\mathscr R}^{x,\tau}_{N} =0, \quad {\text{a.s.}}
 	\end{align*}
 \end{prop}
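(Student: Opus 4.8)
The plan is to establish two facts — that the family $\{\mathscr R^{x,\tau}_k\}_{k\in\N}$ from \eqref{equ_defR} is bounded in $L^c$ uniformly in $k$, and that $\tilde t_N$ grows at least linearly in $N$ so that $\sum_N(\tilde t_N)^{-c/2}<\infty$ when $c>2$ (this is exactly \eqref{equ_sumofTildeT}) — and then to conclude by a Borel--Cantelli argument. The reason for passing to the quasi-uniform grid $\{\tilde t_N\}_{N\in\N}$ is precisely that $\{t_n\}_{n\in\N}$ need only tend to infinity, so the summability above could fail along the original grid. As a first step I would apply the Markov property of $\{Y^{x,\tau}_{t_k}\}_{k\in\N}$ to \eqref{equ_defR} and write, for $k\in\N^+$,
\begin{align*}
\mathscr R^{x,\tau}_k=-\sum_{i=k+1}^{\infty}\tau_i\big(P^{\tau}_{t_k,t_i}f(Y^{x,\tau}_{t_k})-\mu(f)\big)+C_x,\qquad C_x:=\sum_{i=0}^{\infty}\tau_i\big(P^{\tau}_{t_i}f(x)-\mu(f)\big),
\end{align*}
where $C_x$ is a deterministic constant since $Y^{x,\tau}_{t_0}=x$; as $\tilde t_N\to\infty$ it contributes nothing to $(\tilde t_N)^{-1/2}\tilde{\mathscr R}^{x,\tau}_N$ in the limit once we know it is finite.

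The heart of the matter is the pointwise estimate that, for all $0\le k\le i$ and $y\in E$,
\begin{align*}
\big|P^{\tau}_{t_k,t_i}f(y)-\mu(f)\big|\le K\|f\|_{p,\gamma}\big(1+\|y\|^{\frac{p\tilde q}{2}+\tilde d\gamma}\big)\big[(\rho(t_i-t_k))^{\gamma}+\tau_i^{\gamma\alpha}\big].
\end{align*}
To prove this I would split $|P^{\tau}_{t_k,t_i}f(y)-\mu(f)|\le|P^{\tau}_{t_k,t_i}f(y)-P_{t_i-t_k}f(y)|+|P_{t_i-t_k}f(y)-\mu(f)|$, using the time-homogeneity of $\{X^x_t\}_{t\ge0}$ to identify $\E[f(X^{y}_{t_k,t_i})]=P_{t_i-t_k}f(y)$. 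The second term is handled by the $\gamma$-analogue of \eqref{equ_apprExactPttoMu} — obtained from \eqref{distanceOfTwoMeasure}, \Cref{a1}(ii), and the moment bound \eqref{equ_boundOfNormOfmu} — which produces the factor $(\rho(t_i-t_k))^{\gamma}$ times a polynomial in $\|y\|$. For the first term, \eqref{distanceOfTwoMeasure} applied to the laws of $Y^{y,\tau}_{t_k,t_i}$ and $X^{y}_{t_k,t_i}$, the natural coupling of these two together with \Cref{a3} (which bounds $\mathbb W_2$ by $K(1+\|y\|^{\tilde r\vee\tilde q})\tau_i^{\alpha}$, hence $\mathbb W_2^{\gamma}$ by a constant times $(1+\|y\|^{\tilde d\gamma})\tau_i^{\gamma\alpha}$), and the moment bounds of \Cref{a1}(i) and \Cref{a2}(i) for the prefactor, supply the remaining contribution; collecting the $\|y\|$-exponents and using $p\in[1,r\wedge q]$ yields the stated growth rate $\tfrac{p\tilde q}{2}+\tilde d\gamma$.

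Granting this estimate, I would evaluate it at $y=Y^{x,\tau}_{t_k}$, sum over $i$, and absorb $\sup_k\sum_{i\ge k}\tau_i(\rho(t_i-t_k))^{\gamma}<\infty$ and $\sum_i\tau_i^{1+\gamma\alpha}<\infty$ — the same two bounds also show $C_x$ is finite — to obtain $|\mathscr R^{x,\tau}_k|\le K\big(1+\|Y^{x,\tau}_{t_k}\|^{\frac{p\tilde q}{2}+\tilde d\gamma}\big)+|C_x|$. Taking $c$-th moments and invoking \Cref{a2}(i) with $c(\tfrac{p\tilde q}{2}+\tilde d\gamma)\le q$ then gives $M:=\sup_{k}\E[|\mathscr R^{x,\tau}_k|^c]<\infty$. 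For $\e>0$ and $N>\bar\tau$, Markov's inequality gives $\PP\big(|(\tilde t_N)^{-1/2}\tilde{\mathscr R}^{x,\tau}_N|>\e\big)=\PP\big(|\mathscr R^{x,\tau}_{n_{(N)}}|>\e(\tilde t_N)^{1/2}\big)\le M\e^{-c}(\tilde t_N)^{-c/2}$, and since $\tilde t_N=t_{n_{(N)}}>N-\bar\tau$, these probabilities are summable in $N$ because $c/2>1$, exactly as in \eqref{equ_sumofTildeT}; Borel--Cantelli and letting $\e\downarrow0$ yield $\lim_{N\to\infty}(\tilde t_N)^{-1/2}\tilde{\mathscr R}^{x,\tau}_N=0$ almost surely. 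I expect the main obstacle to be the pointwise estimate: the non-homogeneity of $\{Y^{x,\tau}_{t_k}\}_{k\in\N}$ rules out a semigroup or time-shift derivation of the decay of $P^{\tau}_{t_k,t_i}f-\mu(f)$, so one must route through the exact transition operator $P_{t_i-t_k}$ and balance the numerical-error bound (\Cref{a3}) against the exact contraction (\eqref{equ_apprExactPttoMu}), all while keeping the polynomial growth in $\|y\|$ within the moment budget $c(\tfrac{p\tilde q}{2}+\tilde d\gamma)\le q$; everything afterwards is routine.
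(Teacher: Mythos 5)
Your proposal is correct and follows essentially the same route as the paper: the pointwise bound you derive for $|P^{\tau}_{t_k,t_i}f(y)-\mu(f)|$ (by inserting the exact transition operator and balancing Assumption \ref{a3} against Assumption \ref{a1}(ii)) is exactly the paper's estimate \eqref{equ_PtkApprmuFromtm}, established in the preamble of \Cref{sec_martingaleProperties}, after which the uniform $L^c$-bound on $\mathscr R^{x,\tau}_k$, the growth bound $\tilde t_N>N-\bar\tau$ from \eqref{equ_sumofTildeT}, Chebyshev/Markov, and Borel--Cantelli are used identically. The only cosmetic difference is that you isolate the initial-data term as a deterministic constant $C_x$, whereas the paper keeps it as the $\|x\|^{\frac{p\tilde q}{2}+\tilde d\gamma}$ contribution inside the same estimate.
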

 \begin{prop} \label{prop_RemainderLimit2} 
 	Let Assumptions \ref{a1}--\ref{a3} hold. Suppose that there exists constants $c>2$, $p\geq 1$ and $\gamma\in(0, 1]$ such that 
 	$\frac{cp}{2} \leq q$.
 	Then for any $x\in E$ and $f\in C_{p,\gamma}$, we have
 	\begin{align*}
 		\lim_{k\to \infty}	\frac{1}{(t_k)^{\frac 12}} R^{x,\tau}_{k} =0, \quad {\text{a.s.}}
 	\end{align*}
 \end{prop}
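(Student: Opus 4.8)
The plan is to show that $R^{x,\tau}_k/(t_k)^{1/2}\to 0$ almost surely by a Borel--Cantelli argument applied along the quasi-uniform subsequence $\{\tilde t_N\}$, combined with a maximal-type control over the short blocks $[t_{n_{(\tilde k)}}, t_k)$ that are discarded in the passage from $k$ to $\tilde k$. Recall from \eqref{equ_defR2} that $R^{x,\tau}_k = \sum_{i=n_{(\tilde k)}+1}^{k} \tau_i(f(Y^{x,\tau}_{t_i})-\mu(f))$, a sum whose total length $t_k - t_{n_{(\tilde k)}}$ is at most $1+\bar\tau$ by construction of the subsequence (cf.\ the reasoning behind \eqref{equ_tildetOneStepBound}). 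So the numerator is a \emph{bounded-length} Riemann-type sum; the only issue is that the summands $f(Y^{x,\tau}_{t_i})-\mu(f)$ can be large because $f\in\mathcal C_{p,\gamma}$ grows like $\|\cdot\|^{p/2}$.

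First I would estimate the $c$-th moment of $R^{x,\tau}_k$. Using $|f(u)-\mu(f)|\le \|f\|_{p,\gamma}(1+\|u\|^{p/2})+|\mu(f)|$ together with Assumption~\ref{a1}(i)--(ii) applied to the numerical chain (Assumption \ref{a2}(i)) and Hölder's inequality across the at most $(1+\bar\tau)/\underline{\tau}$-many summands — more efficiently, by bounding $\E[\,|R^{x,\tau}_k|^c\,]\le (t_k-t_{n_{(\tilde k)}})^{c-1}\sum_{i}\tau_i\,\E[\,|f(Y^{x,\tau}_{t_i})-\mu(f)|^c\,]$ via the weighted Jensen/power-mean inequality — one gets
\begin{align*}
\E\big[\,|R^{x,\tau}_k|^c\,\big] \le K(1+\bar\tau)^{c-1}\sup_{i}\Big(\tau_i\sum_{j\le i\text{ in block}}\E\big[\|Y^{x,\tau}_{t_j}\|^{cp/2}\big]\Big)\le K(1+\|x\|^{cp\tilde q/2}),
\end{align*}
where the last step uses the uniform moment bound of Assumption~\ref{a2}(i), which is finite precisely because of the hypothesis $\tfrac{cp}{2}\le q$. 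Here $K$ absorbs $\bar\tau$, $\underline\tau$ does not appear because the $\tau_i$-weights keep the block mass bounded. Thus $\sup_k \E[\,|R^{x,\tau}_k|^c\,]<\infty$.

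Next, along the subsequence, by Markov's inequality and $c>2$,
\begin{align*}
\sum_{N} \PP\Big\{ |R^{x,\tau}_{N}| > \e\,(\tilde t_{N})^{1/2}\Big\} \le \e^{-c}\sum_{N}\frac{\E[\,|R^{x,\tau}_{N}|^c\,]}{(\tilde t_{N})^{c/2}} \le K\e^{-c}\sum_{N}(\tilde t_{N})^{-c/2}<\infty,
\end{align*}
where the final sum converges by \eqref{equ_sumofTildeT} since $c/2>1$. Borel--Cantelli then gives $R^{x,\tau}_{N}/(\tilde t_N)^{1/2}\to 0$ a.s.\ along $N$. To upgrade to the full sequence $k\to\infty$: for each $k$ write $N:=n_{(\tilde k)}$ (an integer index of the subsequence), observe that $R^{x,\tau}_k$ differs from $R^{x,\tau}_{\text{(at the subsequence point)}}$ only by the bounded-length tail $\sum_{i=N+1}^{k}\tau_i(f(Y^{x,\tau}_{t_i})-\mu(f))$; but in fact $R^{x,\tau}_k$ \emph{is} exactly such a tail, of length $\le 1+\bar\tau$, so the moment bound above applies uniformly to every $k$ — hence one may directly sum over the coarse index $\lfloor t_k\rfloor$: for each integer $m$, $\PP\{\sup_{k: \lfloor t_k\rfloor = m}|R^{x,\tau}_k|>\e\, m^{1/2}\}$ is controlled by at most $\lceil 1/\underline\tau\rceil$ terms of the form $\PP\{|R^{x,\tau}_k|>\e m^{1/2}\}\le K\e^{-c}m^{-c/2}$, whose sum over $m$ converges. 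A final Borel--Cantelli application, together with $t_k\asymp\lfloor t_k\rfloor$ for large $k$, yields $R^{x,\tau}_k/(t_k)^{1/2}\to 0$ a.s.

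The main obstacle I anticipate is purely bookkeeping: making the "bounded number of summands per unit-length block" argument clean without an assumed lower bound $\underline\tau>0$ on the steps. Since $\tau_k\to 0$, a block $[t_{n_{(\tilde k)}}, t_k)$ of total length $\le 1+\bar\tau$ may contain arbitrarily many grid points, so one must not estimate term-by-term but rather exploit that the $\tau_i$-weights sum to $\le 1+\bar\tau$ and use the weighted power-mean inequality; and when I then want a \emph{maximal} inequality over all $k$ with $\lfloor t_k\rfloor=m$, I should note there are at most $\lceil 2/\tau_{\min(m)}\rceil$-many such $k$ but each contributes the \emph{same} uniform moment bound $K(1+\|x\|^{cp\tilde q/2})$ independent of how fine the grid is — so a union bound over these still gives an $m^{-c/2}$-summable tail after slightly strengthening to, say, $c>2$ fixed and absorbing any polynomial-in-$m$ count via the strict inequality $c/2>1$; if this last count were not polynomially bounded one would instead run Borel--Cantelli directly on the subsequence $\{n_{(\tilde k)}\}$ and separately bound $\sup$ over each block by its $L^c$-norm, which is what the argument above ultimately does. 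This is the only delicate point; everything else is a direct application of Assumptions~\ref{a1}--\ref{a2}, Markov's inequality, and \eqref{equ_sumofTildeT}.
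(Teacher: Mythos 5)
Your moment bound is the right one (and is exactly the paper's): by the weighted power--mean inequality with weights $\tau_i$ summing to at most $1+2\bar\tau$ over a block, together with Assumption~\ref{a2}(i) and $\tfrac{cp}{2}\le q$, one gets a bound $K\|f\|^c_{p,\gamma}(1+\|x\|^{cp\tilde q/2})$ uniform in the block. The gap is in how you pass from this to almost sure convergence along \emph{all} $k$. First, the Borel--Cantelli step you state "along the subsequence" is vacuous as written: at a subsequence point $k=n_{(\tilde k)}$ the sum \eqref{equ_defR2} is empty, so $R^{x,\tau}_{N}=0$ there; the whole difficulty lives at the off-subsequence indices. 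Second, your main route for those indices --- a union bound over all $k$ with $\lfloor t_k\rfloor=m$ --- does not work: there is no $\underline\tau>0$, and the number of grid points in a unit-length block is in general not polynomially bounded in $m$ (for $\tau_k=1/k$ one has $t_k\approx\log k$, so the block $[m,m+1)$ contains on the order of $e^m$ indices), while $c>2$ only yields summability of $m^{-c/2}$ with no room to absorb a growing count. The conditional fallback you mention in your last paragraph is in fact the necessary (and the paper's) device, but you do not carry it out.

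The correct execution is: set $\bar R^{x,\tau}_{\tilde k}:=\sum_{i=n_{(\tilde k)}+1}^{n_{(\tilde k+1)}}\tau_i\,|f(Y^{x,\tau}_{t_i})-\mu(f)|$, so that $|R^{x,\tau}_{k}|\le \bar R^{x,\tau}_{\tilde k}$ \emph{pointwise for every} $k$ in the block $[n_{(\tilde k)},n_{(\tilde k+1)})$ --- this replaces any maximal inequality, since a single random variable dominates the supremum over the block. Your weighted Jensen computation applied to $\bar R^{x,\tau}_{\tilde k}$ gives $\sup_{\tilde k}\E\big[|\bar R^{x,\tau}_{\tilde k}|^c\big]\le K(1+2\bar\tau)^c\|f\|^c_{p,\gamma}(1+\|x\|^{cp\tilde q/2})$. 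Then Chebyshev and Borel--Cantelli are applied only along the block index $\tilde k$, using $\sum_{\tilde k}(t_{n_{(\tilde k)}})^{-c/2}<\infty$ from \eqref{equ_sumofTildeT} (here $c>2$ is used), yielding $(t_{n_{(\tilde k)}})^{-1/2}\bar R^{x,\tau}_{\tilde k}\to0$ a.s.; finally, since $t_k\ge t_{n_{(\tilde k)}}$ (equivalently, using $t_{n_{(\tilde k)}}/t_{n_{(\tilde k+1)}}\to1$ as in the proof of Theorem~\ref{thm_numerLIL}), this forces $(t_k)^{-1/2}|R^{x,\tau}_k|\to0$ a.s. With this replacement your argument closes; without it, the main path of your proof fails.
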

Building on Propositions \ref{prop_LILforMartingale}--\ref{prop_RemainderLimit2}, 
we now present the proof of \Cref{thm_numerLIL}.

 \begin{proof}[Proof of  \Cref{thm_numerLIL}]
 
 Under the assumptions of   \Cref{thm_numerLIL}, 
 we know that Proposition \ref{prop_RemainTermLimit} and Proposition \ref{prop_RemainderLimit2} hold for $c=4$.
 	Combining Propositions \ref{prop_LILforMartingale}--\ref{prop_RemainderLimit2},  we arrive at 
 	\begin{align}  \label{equ_MainthmUpperbound}
 	&\quad \limsup_{k\to \infty} \frac{	\sum_{i=1}^{k}\tau_i \big(f(Y^{x,\tau}_{t_i})-\mu(f)\big)}{\sqrt{2t_k\log\log t_k}} \nn\\
 	&= \limsup_{k\to \infty} \frac{\tilde{\mathscr M}^{x,\tau}_{\tilde{k}} +\tilde{\mathscr R}^{x,\tau}_{\tilde{k}}  }{\sqrt{2t_k\log\log t_k}}+ \lim_{k\to \infty} \frac{ R^{x,\tau}_{k}  }{\sqrt{2t_k\log\log t_k}} \nn \\
 		&\leq \limsup_{\tilde{k}\to \infty} \frac{\tilde{\mathscr M}^{x,\tau}_{\tilde{k}}  }{\sqrt{2t_{n_{(\tilde{k})}}\log\log t_{n_{(\tilde{k})}}}}+ \lim_{\tilde{k}\to \infty} \frac{\tilde{\mathscr R}^{x,\tau}_{\tilde{k}}  }{\sqrt{2t_{n_{(\tilde{k})}}\log\log t_{n_{(\tilde{k})}}}}   + \lim_{k\to \infty} \frac{ R^{x,\tau}_{k}  }{\sqrt{2t_k\log\log t_k}}= v, \quad \text{a.s.}
 	\end{align}
 	and
 	\begin{align}  \label{equ_MainthmLowerbound}
 		&\quad \limsup_{k\to \infty} \frac{\sum_{i=1}^{k}\tau_i \big(f(Y^{x,\tau}_{t_i})-\mu(f)\big)}{\sqrt{2t_k\log\log t_k}} \nn \\
 		&\geq  \limsup_{\tilde{k}\to \infty} \frac{\tilde{\mathscr M}^{x,\tau}_{\tilde{k}}  }{\sqrt{2t_{n_{(\tilde{k+1})}}\log\log t_{n_{(\tilde{k+1})}}}}+ \lim_{\tilde{k}\to \infty} \frac{\tilde{\mathscr R}^{x,\tau}_{\tilde{k}}  }{\sqrt{2t_{n_{(\tilde{k+1})}}\log\log t_{n_{(\tilde{k+1})}}}}  + \lim_{k\to \infty} \frac{ R^{x,\tau}_{k}  }{\sqrt{2t_k\log\log t_k}} \nn \\
 		&\geq \limsup_{\tilde{k}\to \infty} \frac{\tilde{\mathscr M}^{x,\tau}_{\tilde{k}}   }{\sqrt{2t_{n_{(\tilde{k})}}\log\log t_{n_{(\tilde{k})}}}} \frac{\sqrt{2t_{n_{(\tilde{k})}}\log\log t_{n_{(\tilde{k})}}}} {\sqrt{2t_{n_{(\tilde{k}+1)}}\log\log t_{n_{(\tilde{k}+1)}}}}  \nn \\
 		&\quad +\lim_{\tilde{k}\to \infty} \frac{\tilde{\mathscr R}^{x,\tau}_{\tilde{k}}   }{\sqrt{2t_{n_{(\tilde{k})}}\log\log t_{n_{(\tilde{k})}}}} \frac{\sqrt{2t_{n_{(\tilde{k})}}\log\log t_{n_{(\tilde{k})}}}} {\sqrt{2t_{n_{(\tilde{k}+1)}}\log\log t_{n_{(\tilde{k}+1)}}}}= v, \quad \text{a.s.}
	\end{align}
 	Here we use  $	\lim_{\tilde{k}\to \infty} t_{n_{(\tilde{k})}}/t_{n_{(\tilde{k}+1)}}= 1$, which follows from
 \begin{align*}
 	\lim_{\tilde{k}\to \infty} \frac{t_{n_{(\tilde{k})}}}{t_{n_{(\tilde{k}+1)}}} \geq	\lim_{\tilde{k}\to \infty} \frac{\tilde{k}-\bar{\tau}}{\tilde{k}+1} =1
 \end{align*}
 and
 $		\lim_{\tilde{k}\to \infty} t_{n_{(\tilde{k})}}/t_{n_{(\tilde{k}+1)}}\leq 1$. 
 Combining \eqref{equ_MainthmUpperbound} and \eqref{equ_MainthmLowerbound}  yields the $\limsup$ result in \Cref{thm_numerLIL}.
 	By taking $f$ as $-f$, we similarly obtain 
 	\begin{align*}
 		\liminf_{k\to \infty} \frac{ \sum^{k}_{j=1}\tau_j \big( f(Y^{x,\tau}_{t_j}) - \mu(f) \big)}{\sqrt{2t_k\log\log t_k}} = -v, \quad \text{a.s.},
 	\end{align*}
 	which gives the  $\liminf$ result in \Cref{thm_numerLIL}.
 \end{proof}
\section{Estimates of the  martingale difference sequence}\label{sec_martingaleProperties} 
In this section, we provide estimates of the martingale difference sequence. 
We first show that $\{\mathscr M^{x,\tau}_{k}\}_{k\in\mathbb N}$ 
 forms  a martingale (see \Cref{prop_martingaleIntegrability}), and then prove the moment boundedness for the corresponding martingale difference sequence (see \Cref{prop_ZkIntegrable}). Building on these estimates, we further derive the $L^2$-limit and the almost sure limit of the martingale difference sequence associated with the predominant martingale (see \Cref{prop_ZkSquareMoment} and \Cref{prop_almostsureZk}), which play a key role in the proof of \Cref{prop_LILforMartingale}. 

We first give some estimates about the measures.
Let $\xi$ denote the $E$-valued random variable such that $\mu=\mathbb{P}\circ\xi^{-1}$.
Using Assumptions \ref{a1}--\ref{a3} and the definition of the Wasserstein distance, for any $m ,k \in\N^+$ with $k\geq m$, we obtain the following estimates:
\begin{align*}
\mathbb{W}_2(\mu^{x}_{t_m, t_k},  \mu_{t_m, t_k}^{x,\tau})
&\leq \big(\E \big[\| X^{x}_{t_m, t_k} - Y^{x, \tau}_{t_m, t_k} \|^2\big]\big)^{\frac12} 
\leq K \big( 1+\|x\|^{\tilde{r}\vee\tilde{q}} \big) \tau_k^{\alpha} 
\end{align*}
and
\begin{align*}  
	\mathbb{W}_2(\mu^{x}_{t_m, t_k},  \mu) &\leq \E\Big[\big(\E \big[\| X^{x}_{t_m, t_k} - X^{u}_{t_m, t_k} \|^2\big]\big)^{\frac12}  \Big|_{u=\xi} \Big] \nn \\
	&\leq K \E \big[ \|\xi-x\| (1+\|\xi\|^\beta+\|x\|^{\beta} )\big] \rho(t_k-t_m)  \nn \\
	&\leq K(1+\|x\|^{1+\beta}) \rho(t_k-t_m).
\end{align*}
These estimates yield
\begin{align}  \label{equ_apprIM}
\mathbb{W}_2(\mu^{x, \tau}_{t_m, t_k},  \mu) &\leq \mathbb{W}_2(\mu^{x}_{t_m, t_k},  \mu_{t_m, t_k}^{x,\tau}) + \mathbb{W}_2(\mu^{x}_{t_m, t_k},  \mu)\nn \\
&\leq K(1+\|x\|^{(1+\beta)\vee\tilde{r}\vee\tilde{q}}) \max\{ \tau_k^{\alpha},  \rho(t_k-t_m)\}.
\end{align}
Using  \eqref{distanceOfTwoMeasure},  \eqref{equ_boundOfNormOfmu}, and  \eqref{equ_apprIM},  for any $k,m\in \N$ such that $k\geq m$, $u \in E$, and $f\in \mathcal{C}_{p,\gamma}$ with $p \in [1,r \wedge q]$ and $\gamma \in (0,1]$, we have 
	\begin{align} \label{equ_PtkApprmuFromtm}
		| P^\tau_{t_m, t_k} f(u) -\mu(f) |  &\leq \|f\|_{p,\gamma} \big(1+  \E\big[\|Y_{t_m, t_k}^{u,\tau}\|^p\big] +\mu(\|\cdot\|^{p})\big)^{\frac{1}{2}}(\mathbb W_{2}(\mu_{t_m, t_k}^{u,\tau},\mu))^{\gamma} \nn \\
		&\leq K  \|f\|_{p,\gamma}  (1+\|u\|^{\frac{p\tilde{q}}{2}+\tilde{d}\gamma})  \max\big\{ \tau_k^{\gamma \alpha}, (\rho(t_k-t_m))^\gamma \big\}.
	\end{align}
Specially,  when $m=0$, \eqref{equ_PtkApprmuFromtm} turns into 
\begin{equation} \label{equ_PtkApprmu}
	\begin{aligned}
		| P^\tau_{t_k} f(u) -\mu(f) |  
		&\leq K  \|f\|_{p,\gamma}  (1+\|u\|^{\frac{p\tilde{q}}{2}+\tilde{d}\gamma})  \max\big\{ \tau_k^{\gamma \alpha}, (\rho(t_k))^\gamma \big\}.
	\end{aligned}
\end{equation}
 Here we recall that $\tilde{d}= (1+\beta)\vee\tilde{r}\vee\tilde{q}$.

With these preparations, we first show that $\{\mathscr M^{x,\tau}_{k}\}_{k\in\mathbb N}$, as defined in \eqref{mar_eq}, is a martingale.

\begin{prop}\label{prop_martingaleIntegrability}
Let Assumptions \ref{a1}--\ref{a3} hold.   
 Suppose that these exist constants $p\in [1, r \wedge q]$ and $\gamma\in(0, 1]$  such that $\frac{p\tilde{q}}{2}+\tilde{d}\gamma\leq q$.
 If the step-size sequence $\tau$ satisfies  $\sum_{i=1}^{\infty} \tau_i^{1+\gamma \alpha}  <\infty$ and $ \sup_{k\in\N} \sum_{i=k}^{\infty} \tau_i   (\rho(t_i- t_k))^\gamma  <\infty$, 
then, for any $x\in E$ and $f\in\mathcal C_{p,\gamma}$,  the sequence $\{\mathscr M^{x,\tau}_{k}\}_{k\in\mathbb N}$ is an $\{\mathcal F_{t_k}\}_{k\in \mathbb N}$-adapted martingale with  $\mathscr M^{x,\tau}_{0}=0$. 
\end{prop}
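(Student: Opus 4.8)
The plan is to verify the two defining properties of a martingale for $\{\mathscr M^{x,\tau}_k\}_{k\in\N}$: integrability of each term, and the tower-property identity $\E[\mathscr M^{x,\tau}_{k+1}\mid\mathcal F_{t_k}]=\mathscr M^{x,\tau}_k$. First I would rewrite the definition \eqref{mar_eq} in a cleaner form. For $i<k$ the term $\E[f(Y^{x,\tau}_{t_i})\mid\mathcal F_{t_k}]=f(Y^{x,\tau}_{t_i})$ is $\mathcal F_{t_k}$-measurable, while for $i\ge k$ the Markov property gives $\E[f(Y^{x,\tau}_{t_i})\mid\mathcal F_{t_k}]=P^\tau_{t_k,t_i}f(Y^{x,\tau}_{t_k})$. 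Hence
\begin{align*}
\mathscr M^{x,\tau}_k=\sum_{i=0}^{k-1}\tau_i\big(f(Y^{x,\tau}_{t_i})-\mu(f)\big)+\sum_{i=k}^{\infty}\tau_i\big(P^\tau_{t_k,t_i}f(Y^{x,\tau}_{t_k})-\mu(f)\big)-\sum_{i=0}^{\infty}\tau_i\big(P^\tau_{t_i}f(x)-\mu(f)\big),
\end{align*}
where the last (deterministic) sum is just a centering constant that makes $\mathscr M^{x,\tau}_0=0$; note it is finite by \eqref{equ_PtkApprmu} together with the two summability hypotheses on $\tau$. The $\mathcal F_{t_k}$-adaptedness is then evident.

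Next I would establish integrability. Using \eqref{equ_PtkApprmuFromtm} (valid since $\tfrac{p\tilde q}{2}+\tilde d\gamma\le q$) one bounds $|P^\tau_{t_k,t_i}f(Y^{x,\tau}_{t_k})-\mu(f)|\le K\|f\|_{p,\gamma}(1+\|Y^{x,\tau}_{t_k}\|^{p\tilde q/2+\tilde d\gamma})\max\{\tau_i^{\gamma\alpha},(\rho(t_i-t_k))^\gamma\}$, and similarly the first sum is controlled by $\sum_{i<k}\tau_i\|f\|_{p,\gamma}(1+\|Y^{x,\tau}_{t_i}\|^{p/2})$. Taking expectations, invoking the uniform moment bound Assumption~\ref{a1}(i)/\ref{a2}(i) (the exponents here are $\le q$ by hypothesis), and then summing over $i$ using $\sum_i\tau_i^{1+\gamma\alpha}<\infty$ and $\sup_k\sum_{i\ge k}\tau_i(\rho(t_i-t_k))^\gamma<\infty$ shows $\E|\mathscr M^{x,\tau}_k|<\infty$; in fact this gives a uniform $L^1$ (indeed $L^2$) bound, which is needed anyway for the later LIL argument.

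For the martingale identity, I would apply $\E[\,\cdot\mid\mathcal F_{t_k}]$ to $\mathscr M^{x,\tau}_{k+1}$. The first sum $\sum_{i\le k}\tau_i(f(Y^{x,\tau}_{t_i})-\mu(f))$ splits as $\sum_{i<k}+\tau_k(f(Y^{x,\tau}_{t_k})-\mu(f))$; the $i<k$ part is $\mathcal F_{t_k}$-measurable, and the $i=k$ term equals $\tau_k(P^\tau_{t_k,t_k}f(Y^{x,\tau}_{t_k})-\mu(f))$ after conditioning is trivial since it is already $\mathcal F_{t_k}$-measurable — more to the point, I would keep it and combine. For the tail sum, the Markov property and the Chapman--Kolmogorov relation $P^\tau_{t_{k+1},t_i}$ composed with the transition from $t_k$ to $t_{k+1}$ give $\E[P^\tau_{t_{k+1},t_i}f(Y^{x,\tau}_{t_{k+1}})\mid\mathcal F_{t_k}]=P^\tau_{t_k,t_i}f(Y^{x,\tau}_{t_k})$ for each $i\ge k+1$; interchanging $\E[\cdot\mid\mathcal F_{t_k}]$ with the infinite sum is justified by the dominated convergence bound from the previous step. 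Therefore $\E[\mathscr M^{x,\tau}_{k+1}\mid\mathcal F_{t_k}]=\sum_{i\le k}\tau_i(f(Y^{x,\tau}_{t_i})-\mu(f))+\sum_{i\ge k+1}\tau_i(P^\tau_{t_k,t_i}f(Y^{x,\tau}_{t_k})-\mu(f))-(\text{const})$, and reassembling the $i=k$ term $\tau_k(P^\tau_{t_k,t_k}f(Y^{x,\tau}_{t_k})-\mu(f))$ into the tail sum recovers exactly $\mathscr M^{x,\tau}_k$. The main obstacle is the bookkeeping: one must justify the interchange of conditional expectation and the infinite series (hence the need for the $L^1$ domination established above) and carefully handle the boundary index $i=k$ so that the telescoping of the two representations is exact; the underlying probabilistic content — the Markov property plus Chapman--Kolmogorov — is routine.
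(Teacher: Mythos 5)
Your proof is correct and takes essentially the same route as the paper: integrability is obtained exactly as there, from \eqref{equ_PtkApprmuFromtm}--\eqref{equ_PtkApprmu}, the moment bound in Assumption \ref{a2}(i), and the two step-size summability conditions, while the martingale identity follows from the tower/Markov property (the paper simply conditions the defining series on $\mathcal F_{t_s}$ for $s\le k$, which sidesteps your explicit Chapman--Kolmogorov telescoping at the boundary index, though both need the same interchange of conditional expectation and series that you justify by domination). One parenthetical aside is inaccurate but not load-bearing: the bound one actually gets is of order $t_{k-1}\|f\|_{p,\gamma}(1+\|x\|^{\frac{p\tilde q^2}{2}+\tilde q\tilde d\gamma})$, hence not uniform in $k$, and an $L^2$ bound would require $p\tilde q+2\tilde d\gamma\le q$, which is not among the hypotheses of this proposition.
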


\begin{proof}
	The fact $\mathscr M^{x,\tau}_{0}=0$ is obvious. 
Combining \eqref{equ_PtkApprmuFromtm} and \eqref{equ_PtkApprmu}  with the expression of $\mathscr M^{x,\tau}_{k}$, we prove that
	\begin{align*}
|\mathscr M^{x,\tau}_{k}|
&\leq \Big| \sum_{i=0}^{k-1} \tau_i \big(f(Y_{t_i}^{x,\tau})-\mu(f)\big)\Big| 
 + \sum_{i=k}^{\infty} \tau_i \Big| P^\tau_{t_k,t_i}f(Y_{t_k}^{x,\tau})-\mu(f)\Big| + \sum_{i=0}^{\infty}\tau_i \Big| P^\tau_{t_i}f(x)-\mu(f)\Big| \\
	&\leq \sum_{i=0}^{k-1} \tau_i  \big|f(Y_{t_i}^{x,\tau})-\mu(f)\big| \\
	&\quad+K\|f\|_{p,\gamma}(1+\|Y_{t_k}^{x,\tau}\|^{\frac{p\tilde{q}}{2}+\tilde{d}\gamma})\sum_{i=k}^{\infty}\big(\tau_i  \max\big\{ \tau_i^{\gamma \alpha}, (\rho(t_i-t_k))^\gamma \big\} \big)\\
		&\quad +K\|f\|_{p,\gamma}(1+\|x\|^{\frac{p\tilde{q}}{2}+\tilde{d}\gamma} ) 
		\sum_{i=0}^{\infty} \big(\tau_i   \max\big\{ \tau_i^{\gamma \alpha}, (\rho(t_i))^\gamma \big\} \big).
	\end{align*}
	Using the conditions  $\frac{p\tilde{q}}{2}+\tilde{d}\gamma \leq q$,   $\sum_{i=0}^{\infty} \tau_i^{1+\gamma \alpha}  <\infty$, 
	$\sum_{i=k}^{\infty} \tau_i   (\rho (t_i- t_k))^\gamma  <\infty$,	
	and  Assumption \ref{a2} leads to
	\begin{align*}
		\E\Big[ \big|\mathscr M^{x,\tau}_{k} \big| \Big]
		&\leq \sum^{k-1}_{i=0} \tau_i \|f\|_{p,\gamma}(1+\E[\|Y^{x,\tau}_{t_i}\|^{\frac p2}]) \\
		&\quad + K\|f\|_{p,\gamma} \Big(1+\E [\|Y_{t_k}^{x,\tau}\|^{\frac{p\tilde{q}}{2}+\tilde{d}\gamma}] \Big)\sum_{i=k}^{\infty}\big(\tau_i  \max\big\{ \tau_i^{\gamma \alpha}, (\rho(t_i-t_k))^\gamma \big\} \big)\\
		&\quad +K\|f\|_{p,\gamma}\Big(1+\|x\|^{\frac{p\tilde{q}}{2}+\tilde{d}\gamma} \Big) 
		\sum_{i=0}^{\infty} \big(\tau_i   \max\big\{ \tau_i^{\gamma \alpha}, (\rho(t_i))^\gamma \big\} \big) \\
		&\leq Kt_{k-1}\|f\|_{p,\gamma}\Big(1+ \|x\|^{\frac{p\tilde{q}^2}{2}+\tilde{q}\tilde{d}\gamma}\Big)<\infty.
	\end{align*}
	For any $s\in \N$ such that $s\leq k$, we see that
	\begin{align*} 
	&\quad 	\E \Big[ \mathscr M^{x,\tau}_{k} \Big\vert \mathcal{F}_{t_s} \Big] \\ 
	&=   \E \bigg[  \sum_{i=0}^{\infty} \tau_i \Big(\E\big[f(Y_{t_i}^{x,\tau})|\mathcal F_{t_{k}}\big] -\mu(f)\Big)-\sum_{i=0}^{\infty}\tau_i \Big(\E\big[f(Y_{t_i}^{x,\tau})|\mathcal F_{0}\big]- \mu(f)\Big) \bigg\vert  \mathcal{F}_{t_s} \bigg] \\
		&=   \sum_{i=0}^{\infty} \tau_i \Big(\E\big[f(Y_{t_i}^{x,\tau})|\mathcal F_{t_{s}}\big]-\mu(f)\Big)-\sum_{i=0}^{\infty}\tau_i \Big(\E\big[f(Y_{t_i}^{x,\tau})|\mathcal F_{0}\big]-\mu(f)\Big) = \mathscr M^{x,\tau}_{s},
	\end{align*}
 which finishes the proof.
\end{proof}
Recalling that $\{\tilde{\mathscr M}^{x,\tau}_{k}\}_{k\in\N}$ is a subsequence of $\{\mathscr M^{x,\tau}_{k}\}_{k\in\N}$, \Cref{prop_martingaleIntegrability} implies that $\{\tilde{\mathscr M}^{x,\tau}_{k}\}_{k\in\N}$ is  a $\{\mathcal F_{t_{n_{(k)}}}\}_{k\in\N}$-adapted martingale.  Since $\{\mathscr M^{x,\tau}_{k}\}_{k\in\N}$ and $\{\tilde{\mathscr M}^{x,\tau}_{k}\}_{k\in\N}$ are martingales, we  define  the  corresponding martingale difference sequences
by $\mathcal Z^{x,\tau}_{0}= \tilde{\mathcal Z}^{x,\tau}_{0}=0$,  and 
\begin{align*}
	\mathcal Z^{x,\tau}_{k}:=\mathscr M^{x,\tau}_{k}-\mathscr M^{x,\tau}_{k-1}, \qquad 
	\tilde{\mathcal Z}^{x,\tau}_{k}:=\tilde{\mathscr M}^{x,\tau}_{k}-\tilde{\mathscr M}^{x,\tau}_{k-1}  
\end{align*}
for all $k\in\N^+$, respectively. 
The following proposition gives the moment boundness of  the  martingale difference sequences $\{\mathcal{Z}^{x,\tau}_k\}_{k\in\N}$ and $\{\tilde{\mathcal Z}^{x,\tau}_{k}\}_{k\in\N}$. 
\begin{prop}\label{prop_ZkIntegrable}
Let Assumptions \ref{a1}--\ref{a3} hold and $c\geq 1$. Suppose that there exist  $p\in[1, r \wedge q]$ and $\gamma\in (0 ,1]$ such that 
$c\big(\frac{p\tilde{q}}{2}+\tilde{d}\gamma\big)\leq q$.  
If the step-size sequence $\tau$ satisfies  $\sum_{i=1}^{\infty} \tau_i^{1+\gamma \alpha}  <\infty$ and $ \sup_{k\in\N} \sum_{i=k}^{\infty} \tau_i   (\rho(t_i- t_k))^\gamma  <\infty$, then for  $f\in \mathcal{C}_{p,\gamma}$, we have: 
	\begin{align} \label{equ_Momentboundedness1}
			\sup_{k\geq 0}\E[|\mathcal Z^{x,\tau}_{k}|^c]
		&\leq  K\|f\|^c_{p,\gamma} \Big(1+ \|x\|^{c(\frac{p\tilde{q}^2}{2}+\tilde{q}\tilde{d}\gamma)} \Big).
	\end{align}
Moreover, for  $n,m\in\N$ with $n<m$ such that $\sum^{m-1}_{k=n}  \tau_{k-1} \leq \tilde{K}$ with a positive constant $\tilde{K}$ independent of $n$ and $m$,  we have
	 \begin{align} \label{equ_Momentboundedness2}
	 	\E \Big[\big|\sum^{m-1}_{k=n} \mathcal Z^{x,\tau}_{k} \big|^c \Big]
	 	&\leq  K \tilde K^c \|f\|^c_{p,\gamma} \Big(1+  \|x\|^{c(\frac{p\tilde{q}^2}{2}+\tilde{q}\tilde{d}\gamma )} \Big).
	 \end{align}
\end{prop}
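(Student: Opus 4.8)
The plan is to obtain an $L^c$-bound on each martingale difference $\mathcal Z^{x,\tau}_k$ by writing it as a telescoping difference of the defining sums in \eqref{mar_eq} and controlling each piece with the transition-operator estimates already established. Concretely, from \eqref{mar_eq} one has, for $k\in\N^+$,
\begin{align*}
\mathcal Z^{x,\tau}_{k}
&= \tau_{k-1}\big(f(Y^{x,\tau}_{t_{k-1}})-\mu(f)\big)
 + \sum_{i=k}^{\infty}\tau_i\Big(\E\big[f(Y^{x,\tau}_{t_i})\,|\,\mathcal F_{t_k}\big]-\E\big[f(Y^{x,\tau}_{t_i})\,|\,\mathcal F_{t_{k-1}}\big]\Big),
\end{align*}
since the terms with $i\le k-1$ become $\mathcal F_{t_{k-1}}$-measurable and the $\mathcal F_0$-terms cancel. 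Using the Markov property, $\E[f(Y^{x,\tau}_{t_i})\,|\,\mathcal F_{t_k}]=P^\tau_{t_k,t_i}f(Y^{x,\tau}_{t_k})$ and $\E[f(Y^{x,\tau}_{t_i})\,|\,\mathcal F_{t_{k-1}}]=P^\tau_{t_{k-1},t_i}f(Y^{x,\tau}_{t_{k-1}})$, so by \eqref{equ_PtkApprmuFromtm} each summand is bounded in absolute value by
$K\|f\|_{p,\gamma}\big(1+\|Y^{x,\tau}_{t_k}\|^{\frac{p\tilde q}{2}+\tilde d\gamma}+\|Y^{x,\tau}_{t_{k-1}}\|^{\frac{p\tilde q}{2}+\tilde d\gamma}\big)\max\{\tau_i^{\gamma\alpha},(\rho(t_i-t_k))^\gamma\}$ (using $t_i-t_{k-1}\ge t_i-t_k$ and monotonicity absorbed into the constant, or simply bounding both transition terms separately by the triangle inequality through $\mu(f)$). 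Summing over $i\ge k$ and invoking the two summability hypotheses $\sum_i\tau_i^{1+\gamma\alpha}<\infty$ and $\sup_k\sum_{i\ge k}\tau_i(\rho(t_i-t_k))^\gamma<\infty$ controls the tail by a constant times $\|f\|_{p,\gamma}(1+\|Y^{x,\tau}_{t_k}\|^{\frac{p\tilde q}{2}+\tilde d\gamma}+\|Y^{x,\tau}_{t_{k-1}}\|^{\frac{p\tilde q}{2}+\tilde d\gamma})$; the first term $\tau_{k-1}|f(Y^{x,\tau}_{t_{k-1}})-\mu(f)|\le K\overline\tau\|f\|_{p,\gamma}(1+\|Y^{x,\tau}_{t_{k-1}}\|^{p/2})$ is similarly dominated. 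Raising to the $c$-th power, taking expectations, and applying Assumption \ref{a2}(i) together with the moment-compatibility condition $c(\frac{p\tilde q}{2}+\tilde d\gamma)\le q$ (which lets us bound $\E[\|Y^{x,\tau}_{t_k}\|^{c(\frac{p\tilde q}{2}+\tilde d\gamma)}]\le L_4(1+\|x\|^{\tilde q\cdot c(\frac{p\tilde q}{2}+\tilde d\gamma)})$, and the exponent $c(\frac{p\tilde q^2}{2}+\tilde q\tilde d\gamma)$ on the right is exactly $\tilde q$ times the former) yields \eqref{equ_Momentboundedness1}.

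For \eqref{equ_Momentboundedness2}, the plan is to exploit the martingale structure of $\{\mathscr M^{x,\tau}_k\}$, so that $\sum_{k=n}^{m-1}\mathcal Z^{x,\tau}_k=\mathscr M^{x,\tau}_{m-1}-\mathscr M^{x,\tau}_{n-1}$ is itself a martingale increment and the Burkholder--Davis--Gundy inequality (or, for $c\in[1,2]$, the von Bahr--Esseen inequality; for general $c\ge1$ one can also use the conditional form and Jensen) gives
$\E[|\sum_{k=n}^{m-1}\mathcal Z^{x,\tau}_k|^c]\le K\,\E\big[\big(\sum_{k=n}^{m-1}|\mathcal Z^{x,\tau}_k|^2\big)^{c/2}\big]$. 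When $c\ge2$, apply Hölder/Jensen in the form $(\sum_{k=n}^{m-1}a_k)^{c/2}\le(\sum_{k=n}^{m-1}w_k)^{c/2-1}\sum_{k=n}^{m-1}w_k^{1-c/2}a_k^{c/2}$ with weights $w_k=\tau_{k-1}$: since $|\mathcal Z^{x,\tau}_k|$ already carries a factor comparable to $\tau_{k-1}$ from the leading term and a factor comparable to $\sum_{i\ge k}\tau_i\max\{\dots\}$ — which by condition (ii) and the $\tau$-summability is itself uniformly bounded — one in fact has the finer pointwise bound $|\mathcal Z^{x,\tau}_k|\le K\tau_{k-1}^{?}\cdot(\text{moment factor})$; more robustly, simply write $|\mathcal Z^{x,\tau}_k|^2\le \tau_{k-1}\cdot(\tau_{k-1}^{-1}|\mathcal Z^{x,\tau}_k|^2)$ and bound $\tau_{k-1}^{-1}|\mathcal Z^{x,\tau}_k|^2$ via \eqref{equ_Momentboundedness1}-type estimates, so that $\sum_{k=n}^{m-1}|\mathcal Z^{x,\tau}_k|^2\le(\sum_{k=n}^{m-1}\tau_{k-1})\cdot\sup_k(\tau_{k-1}^{-1}|\mathcal Z^{x,\tau}_k|^2)$. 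Taking the $c/2$-power, then the expectation, using $\sum_{k=n}^{m-1}\tau_{k-1}\le\tilde K$ out front as $\tilde K^{c/2}$, and finally bounding the remaining $\E[\sup\text{-type term}]$ by summing the individual moment bounds from the first part — or directly applying the weighted Jensen so that the $c$-th moments of the individual $\mathcal Z^{x,\tau}_k$ appear with weights $\tau_{k-1}/\tilde K$ — produces the factor $\tilde K^c$ and the stated polynomial in $\|x\|$. For $c\in[1,2)$ one can simply bound $|\sum\mathcal Z^{x,\tau}_k|^c\le(\sum|\mathcal Z^{x,\tau}_k|)^c$ directly and use the same weighted-Hölder device with the first-part moment bounds, which is cruder but still gives $\tilde K^c$.

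The main obstacle I expect is the bookkeeping of exponents: one must verify that the polynomial growth exponent in $\|x\|$ produced by iterating Assumption \ref{a2}(i) inside the $c$-th moment is exactly $c(\frac{p\tilde q^2}{2}+\tilde q\tilde d\gamma)$ and not larger, which forces the estimate on $|\mathcal Z^{x,\tau}_k|$ to be arranged so that at most one application of the moment bound (with the single extra factor $\tilde q$ in the exponent) is needed, rather than nesting conditional expectations of nonlinear functionals. A secondary technical point is ensuring the infinite sum $\sum_{i\ge k}\tau_i(\E[\dots|\mathcal F_{t_k}]-\E[\dots|\mathcal F_{t_{k-1}}])$ converges in $L^c$ so that the termwise estimates are legitimate — this follows from dominated convergence using the already-established integrability of the dominating series and Assumption \ref{a2}(i), exactly as in the proof of \Cref{prop_martingaleIntegrability}.
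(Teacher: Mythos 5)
Your argument for \eqref{equ_Momentboundedness1} is essentially the paper's: decompose $\mathcal Z^{x,\tau}_k$ into the drift term $\tau_{k-1}(f(Y^{x,\tau}_{t_{k-1}})-\mu(f))$ plus the two conditional-expectation tails, bound each tail termwise through \eqref{equ_PtkApprmuFromtm} and the two summability hypotheses on $\tau$, and close with Assumption \ref{a2}(i); the exponent bookkeeping you worry about works out exactly as you describe, since only one application of the moment bound is needed. That part is fine.

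For \eqref{equ_Momentboundedness2} there is a genuine gap. Your route (BDG or a weighted H\"older on $\sum_{k=n}^{m-1}|\mathcal Z^{x,\tau}_k|^2$) hinges on the claim that $|\mathcal Z^{x,\tau}_k|$ ``carries a factor comparable to $\tau_{k-1}$'', i.e.\ on a pointwise bound of the form $|\mathcal Z^{x,\tau}_k|\le K\tau_{k-1}\cdot(\text{moment factor})$, or equivalently that $\tau_{k-1}^{-1}|\mathcal Z^{x,\tau}_k|^2$ has uniformly bounded moments. This is false: the only pointwise control available (and the one you yourself derive in part one) is $|\mathcal Z^{x,\tau}_k|\le K\|f\|_{p,\gamma}(1+\|Y^{x,\tau}_{t_{k-1}}\|^{\frac{p\tilde q}{2}+\tilde d\gamma}+\|Y^{x,\tau}_{t_k}\|^{\frac{p\tilde q}{2}+\tilde d\gamma})$, which is $O(1)$, because the two infinite tails $\sum_{i\ge k}\tau_i(P^\tau_{t_k,t_i}f(Y^{x,\tau}_{t_k})-\mu(f))$ and $\sum_{i\ge k-1}\tau_i(P^\tau_{t_{k-1},t_i}f(Y^{x,\tau}_{t_{k-1}})-\mu(f))$ are each of order one and their difference is not $O(\tau_{k-1})$ pathwise (morally it is of order $\sqrt{\tau_k}$, and even that is not established anywhere in your plan). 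Consequently your bound $\sum_{k=n}^{m-1}|\mathcal Z^{x,\tau}_k|^2\le(\sum_{k=n}^{m-1}\tau_{k-1})\sup_k(\tau_{k-1}^{-1}|\mathcal Z^{x,\tau}_k|^2)$ has an uncontrolled second factor, and the crude alternative only yields a factor $(m-n)^{c/2}$ (or $(m-n)^c$ for $c<2$), which is unbounded: since $\tau_k\to 0$, a window with $\sum_{k=n}^{m-1}\tau_{k-1}\le\tilde K$ may contain arbitrarily many steps. The same objection kills the $c\in[1,2)$ fallback $|\sum_k\mathcal Z^{x,\tau}_k|^c\le(\sum_k|\mathcal Z^{x,\tau}_k|)^c$.

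The paper avoids this entirely by summing before estimating: writing out $\sum_{k=n}^{m-1}\mathcal Z^{x,\tau}_k$ from \eqref{equ_splitOfZ}, the interior conditional-expectation tails telescope, leaving only the accumulated drift $\sum_{k=n}^{m-1}\tau_{k-1}(f(Y^{x,\tau}_{t_{k-1}})-\mu(f))$ together with two boundary tails at the indices $n-1$ and $m-1$. The drift sum carries the factor $\sum_{k=n}^{m-1}\tau_{k-1}\le\tilde K$ via H\"older with weights $\tau_{k-1}$ (giving $\tilde K^{c-1}$ times a weighted sum of $c$-th moments, hence $\tilde K^c$), while the two boundary tails are $O(1)$ and are handled by the same moment bound as in part one. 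If you insist on a martingale-inequality route, you would first need a genuine step-size-dependent moment estimate such as $\E[|\mathcal Z^{x,\tau}_k|^c\,|\,\mathcal F_{t_{k-1}}]\lesssim\tau_{k-1}^{c/2}(1+\|Y^{x,\tau}_{t_{k-1}}\|^{\cdots})$, which requires an argument of the type used for the second-moment identity \eqref{equ_GuExpression} (where the large terms cancel), not the crude pointwise bound; as written, your proposal does not supply it.
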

\begin{rem} \label{rem_momentboundedness}
	Note that 
	$ \tilde{\mathcal Z}^{x,\tau}_{k}=\sum^{n_{(k)}}_{j=n_{(k-1)}+1} 	\mathcal Z^{x,\tau}_{j}$.
	Using \eqref{equ_tildetOneStepBound}, we obtain
	\begin{align*}
		\sup_{N\in\N} \sum^{n_{(N)}}_{n_{(N-1)}+1} \tau_{k-1} 
		=	\sup_{N\in\N} | t_{n_{(N)}-1}-t_{n_{(N-1)}-1}| \leq 1+2\bar{\tau},
	\end{align*}
	which combining with Proposition \ref{prop_ZkIntegrable} shows that 
	 \begin{align} \label{equ_MomentboundednessTildeZ}
		\E \big[\big|\tilde{\mathcal Z}^{x,\tau}_{k} \big|^c \big]
		&\leq  K (1+2\bar{\tau})^c \|f\|^c_{p,\gamma} \Big(1+  \|x\|^{c(\frac{p\tilde{q}^2}{2}+\tilde{q}\tilde{d}\gamma )} \Big).
	\end{align}
\end{rem}
\begin{proof}[Proof of \Cref{prop_ZkIntegrable}]
First, we give the proof of \eqref{equ_Momentboundedness1}. 
	We split the martingale difference sequence $\{\mathcal{Z}^{x, \tau}_k\}_{k\in\N}$ as
		\begin{align} \label{equ_splitOfZ}
			\mathcal{Z}^{x, \tau}_k  
			&= \sum_{i=0}^{\infty} \tau_i \Big(\E\big[f(Y_{t_i}^{x,\tau})|\mathcal F_{t_{k}}\big] -\mu(f)\Big) -\sum_{i=0}^{\infty} \tau_i \Big(\E\big[f(Y_{t_i}^{x,\tau})|\mathcal F_{t_{k-1}}\big]-\mu(f)\Big) \nn \\
			&= \tau_{k-1} \big( f(Y_{t_{k-1}}^{x, \tau})-\mu(f)\big) +  \sum_{i=k}^{\infty} \tau_i \Big(\E\big[f(Y_{t_i}^{x,\tau})|\mathcal F_{t_{k}}\big] -\mu(f) \Big) \nn \\
			&\quad -  \sum_{i=k-1}^{\infty} \tau_i \Big(\E\big[f(Y_{t_i}^{x,\tau})|\mathcal F_{t_{k-1}}\big] -\mu(f)\Big).
		\end{align}
	It follows from 
 \eqref{equ_PtkApprmuFromtm}, 	\eqref{equ_PtkApprmu}, and the conditions of $\tau$ that
		\begin{align}\label{equ_integrableZk}
			|\mathcal Z^{x,\tau}_{k}| &\leq\tau_{k-1} \big|f(Y_{t_{k-1}}^{x,\tau})-\mu(f)\big|+\sum_{i=k}^{\infty}\tau_i \big|P^{\tau}_{t_k, t_i}f(Y_{t_k}^{x,\tau})-\mu(f)\big| \nn \\
			&\quad +\sum_{i=k-1}^{\infty}\tau_i  \big|P^{\tau}_{t_{k-1}, t_i}f(Y_{t_{k-1}}^{x,\tau})-\mu(f)\big| \nn\\
			&\leq K\|f\|_{p,\gamma}\tau_{k-1}\big(1+\|Y_{t_{k-1}}^{x,\tau}\|^{\frac{p}{2}}\big) \nn \\
			&\quad +  K\|f\|_{p,\gamma} \Big(1+ \|Y_{t_k}^{x,\tau}\|^{\frac{p\tilde{q}}{2}+\tilde{d}\gamma} \Big)\sum_{i=k}^{\infty}\big(\tau_i  \max\big\{ \tau_i^{\gamma \alpha}, (\rho(t_i-t_k))^\gamma \big\} \big) \nn \\
			&\quad +K\|f\|_{p,\gamma} \Big(1+ \|Y_{t_{k-1}}^{x,\tau}\|^{\frac{p\tilde{q}}{2}+\tilde{d}\gamma} \Big)\sum_{i=k-1}^{\infty}\big(\tau_i  \max\big\{ \tau_i^{\gamma \alpha}, (\rho(t_i-t_{k-1}))^\gamma \big\} \big) \nn \\
			&\leq  K\|f\|_{p,\gamma} \Big(1+ \|Y_{t_{k-1}}^{x,\tau}\|^{\frac{p}{2}}+  \|Y_{t_{k-1}}^{x,\tau}\|^{\frac{p\tilde{q}}{2}+\tilde{d}\gamma} +\|Y_{t_{k}}^{x,\tau}\|^{\frac{p\tilde{q}}{2}+\tilde{d}\gamma} \Big).
		\end{align}
	Combining \eqref{equ_integrableZk} with
	$c\big(\frac{p\tilde{q}}{2}+\tilde{d}\gamma\big)\leq q$ and  Assumption \ref{a2}, we obtain 
	\begin{align*}
		\E\big[	|\mathcal Z^{x,\tau}_{k}|^c\big] &\leq   K\|f\|^c_{p,\gamma} \big(1+ \|x\|^{\frac{cp\tilde{q}}{2}}+  \|x\|^{c(\frac{p\tilde{q}^2}{2}+\tilde{q}\tilde{d}\gamma )} \big) \\
		&\leq   K\|f\|^c_{p,\gamma} \big(1+ \|x\|^{c(\frac{p\tilde{q}^2}{2}+\tilde{q}\tilde{d}\gamma )} \big),
	\end{align*}
	where we use the fact that $\frac p2 \leq \frac{p\tilde{q}}{2}+\tilde{d}\gamma$.  
	
Next we give the proof of \eqref{equ_Momentboundedness2}, which shows that under the condition $\sum^{m-1}_{k=n} \tau_{k-1} \leq \tilde K$, the partial sum $\sum^{m-1}_{k=n} \mathcal Z^{x,\tau}_k$ is also integrable. 
	Using \eqref{equ_splitOfZ}, we write out the sum as follows:
	\begin{align*}
		\sum^{m-1}_{k=n} \mathcal Z^{x,\tau}_k 
		&= \sum^{m-1}_{k=n}   \tau_{k-1} \big( f(Y_{t_{k-1}}^{x, \tau})-\mu(f)\big) \\ 
		&\quad +\sum_{i=m-1}^{\infty} \tau_i \Big(\E\big[f(Y_{t_i}^{x,\tau})|\mathcal F_{t_{m-1}}\big] -\mu(f) \Big) 	 -\sum_{i=n-1}^{\infty} \tau_i \Big(\E\big[f(Y_{t_i}^{x,\tau})|\mathcal F_{t_{n-1}}\big] -\mu(f)\Big). 
	\end{align*}
	Hence, as in \eqref{equ_integrableZk}, we obtain the estimate of the absolute value as 
		\begin{align*}
			\big|\sum^{m-1}_{k=n} \mathcal Z^{x,\tau}_k \big| 
			&\leq K\|f\|_{p,\gamma}    \Big( \Big[ \sum^{m-1}_{k=n}  \tau_{k-1} \big(1+\|Y_{t_{k-1}}^{x,\tau}\|^{\frac{p}{2}}\big) \Big] + 1+  \|Y_{t_{m-1}}^{x,\tau}\|^{\frac{p\tilde{q}}{2}+\tilde{d}\gamma} +\|Y_{t_{n-1}}^{x,\tau}\|^{\frac{p\tilde{q}}{2}+\tilde{d}\gamma} \Big).
		\end{align*}
	For the case  $c=1$, it holds that $ \E[		|\sum^{m-1}_{k=n} \mathcal Z^{x,\tau}_k |] \leq K \tilde K \|f\|_{p,\gamma}(1+  \|x\|^{\frac{p\tilde{q}^2}{2}+\tilde{q}\tilde{d}\gamma} )$.
	For the case  $c>1$, we use the H\"{o}lder inequality to obtain 
		\begin{align} \label{equ_momentofsumZ}
			\E \Big[ \Big( \sum^{m-1}_{k=n}  \tau_{k-1} \big(1+\|Y_{t_{k-1}}^{x,\tau}\|^{\frac{p}{2}}\big) \Big)^c \Big] 
			&\leq   \Big( \sum^{m-1}_{k=n}  \tau_{k-1} \Big)^{c-1} \E \Big[ \sum^{m-1}_{k=n}  \tau_{k-1} (1+\| Y_{t_{k-1}}^{x,\tau}\|^{\frac{cp}{2}})\Big] \nn \\
			&\leq \Big( \sum^{m-1}_{k=n}  \tau_{k-1} \Big)^{c} \big(1+\sup_{k\geq 0}\E[\| Y_{t_{k}}^{x,\tau}\|^{\frac{cp}{2}}] \big) 
			\leq \tilde K^c\big(1+\|x\|^{\frac{cp\tilde{q}}{2}}\big),
		\end{align}
	where we use the condition that $\sum^{m-1}_{k=n}  \tau_{k-1} \leq \tilde K$. 
	Therefore, 
	using \eqref{equ_momentofsumZ}, we see that for any $n,m\in\N$ such that $\sum^{m-1}_{k=n}  \tau_{k-1} \leq \tilde K$ and $c>1$, 
		\begin{align*} 
			&\quad \E\big[		|\sum^{m-1}_{k=n} \mathcal Z^{x,\tau}_k | ^c\big] \\
			&\leq  K\|f\|^c_{p,\gamma}  \Big(1+ \E \Big[ \Big( \sum^{m-1}_{k=n}  \tau_{k-1} \big(1+\|Y_{t_{k-1}}^{x,\tau}\|^{\frac{p}{2}}\big) \Big)^c \Big] + \E\big[  \|Y_{t_{m-1}}^{x,\tau}\|^{c(\frac{p\tilde{q}}{2}+\tilde{d}\gamma)} \big] +\E\big[ \|Y_{t_{n-1}}^{x,\tau}\|^{c(\frac{p\tilde{q}}{2}+\tilde{d}\gamma)} \big]\Big) \\
			& \leq   K \tilde K^c \|f\|^c_{p,\gamma} \Big(1+  \|x\|^{c(\frac{p\tilde{q}^2}{2}+\tilde{q}\tilde{d}\gamma)} \Big). 
		\end{align*}
	The proof is complete. 
\end{proof}
In the following proposition, building on the approximation of the invariant measure $\mu$ 
and the moment boundedness of the martingale differences in \Cref{prop_ZkIntegrable}, we give the $L^2$-limit of the martingale difference sequence $\{\tilde{\mathcal Z}^{x,\tau}_{k} \}_{k\in\N}$.
\begin{prop}\label{prop_ZkSquareMoment}
Let Assumptions \ref{a1}--\ref{a3} hold. Suppose that there exist constants $p\in [1,r \wedge q]$ and $\gamma\in[\gamma_1 ,1]$ such that $\frac p2(1+\tilde{r})+(1+\beta)\gamma_1\leq r$, $p\tilde{q}+2\tilde{d}\gamma\leq q$, $\frac p2+\frac{p\tilde{q}}{2}+ \gamma(\tilde{d}  \vee \kappa)\leq  r \wedge q$ and $\frac p2+ ([(\frac{p}{2}+\gamma )(\tilde{r} \vee \tilde{q})]\vee [( \frac{p}{2} + l \gamma)\tilde{r}])\leq r$. 
  If the step-size sequence $\tau$  satisfies conditions (\romannumeral 1)--(\romannumeral 4) in Theorem \ref{thm_numerLIL}, 
 then for $f\in \mathcal{C}_{p,\gamma}$, we have
 \begin{align} \label{equ_SquareZLimit}
 \lim_{n\to \infty}\frac{1}{\tilde{t}_N}\sum^N_{k=1} \E  \Big[\big| \tilde{\mathcal Z}^{x,\tau}_{k} \big|^2\Big] = v^2.
 \end{align}
\end{prop}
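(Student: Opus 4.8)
The overall strategy parallels the continuous-time argument behind \Cref{thm_exactLIL}: use the orthogonality of the martingale differences to turn the left side of \eqref{equ_SquareZLimit} into a Ces\`aro average along the \emph{original} grid, compute that average exactly through a discrete Poisson-type function, and then identify the limit by comparing the non-homogeneous operators $P^\tau$ with the exact semigroup $P$. \emph{Step 1 (reduction).} Since $\{\mathscr M^{x,\tau}_k\}$ is a martingale (\Cref{prop_martingaleIntegrability}) and $\tilde{\mathcal Z}^{x,\tau}_k=\sum_{j=n_{(k-1)}+1}^{n_{(k)}}\mathcal Z^{x,\tau}_j$ with $n_{(0)}=0$, the $L^2$-orthogonality of $\{\mathcal Z^{x,\tau}_j\}$ (finite second moments by \Cref{prop_ZkIntegrable} with $c=2$, using $p\tilde q+2\tilde d\gamma\le q$) gives $\E[|\tilde{\mathcal Z}^{x,\tau}_k|^2]=\sum_{j=n_{(k-1)}+1}^{n_{(k)}}\E[|\mathcal Z^{x,\tau}_j|^2]$, hence $\sum_{k=1}^N\E[|\tilde{\mathcal Z}^{x,\tau}_k|^2]=\sum_{j=1}^{n_{(N)}}\E[|\mathcal Z^{x,\tau}_j|^2]$ while $\tilde t_N=t_{n_{(N)}}$. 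As $n_{(N)}\to\infty$, it therefore suffices to prove $\frac1{t_n}\sum_{j=1}^n\E[|\mathcal Z^{x,\tau}_j|^2]\to v^2$.

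\emph{Step 2 (Poisson function, variance identity, lower-order terms).} Set $F^\tau_k(y):=\sum_{i\ge k}\tau_i\big(P^\tau_{t_k,t_i}f(y)-\mu(f)\big)$; by \eqref{equ_PtkApprmuFromtm} and conditions (i)--(ii) this series converges with $|F^\tau_k(y)|\le K\|f\|_{p,\gamma}(1+\|y\|^{\frac{p\tilde q}{2}+\tilde d\gamma})$, so $a_k:=\E[(F^\tau_k(Y^{x,\tau}_{t_k}))^2]$ satisfies $\sup_k a_k<\infty$ by \Cref{a2}(i) and $p\tilde q+2\tilde d\gamma\le q$. The composition rule $P^\tau_{t_{k-1},t_k}P^\tau_{t_k,t_i}=P^\tau_{t_{k-1},t_i}$ yields $P^\tau_{t_{k-1},t_k}F^\tau_k=F^\tau_{k-1}-\tau_{k-1}(f-\mu(f))$, and combining this with \eqref{equ_splitOfZ} gives $\mathcal Z^{x,\tau}_j=F^\tau_j(Y^{x,\tau}_{t_j})-\E[F^\tau_j(Y^{x,\tau}_{t_j})\mid\mathcal F_{t_{j-1}}]$ with $\E[F^\tau_j(Y^{x,\tau}_{t_j})\mid\mathcal F_{t_{j-1}}]=F^\tau_{j-1}(Y^{x,\tau}_{t_{j-1}})-\tau_{j-1}(f(Y^{x,\tau}_{t_{j-1}})-\mu(f))$. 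Expanding the conditional variance produces
\begin{align*}
\E\big[|\mathcal Z^{x,\tau}_j|^2\big]=a_j-a_{j-1}+2\tau_{j-1}\,\E\big[F^\tau_{j-1}(Y^{x,\tau}_{t_{j-1}})(f(Y^{x,\tau}_{t_{j-1}})-\mu(f))\big]-\tau_{j-1}^2\,\E\big[(f(Y^{x,\tau}_{t_{j-1}})-\mu(f))^2\big].
\end{align*}
Summing over $j\le n$ and dividing by $t_n$: the telescoping term $(a_n-a_0)/t_n\to0$ because $\sup_k a_k<\infty$, $t_n\to\infty$; and $\frac1{t_n}\sum_{j\le n}\tau_{j-1}^2\E[(f(Y^{x,\tau}_{t_{j-1}})-\mu(f))^2]\to0$ by the Toeplitz lemma, since $\tau_{j-1}\to0$, $p\le q$ keeps the second moments bounded, and $\sum_{j\le n}\tau_{j-1}=t_{n-1}\to\infty$. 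Thus the claim reduces to $\frac2{t_n}\sum_{j=1}^n\tau_{j-1}\,\E[F^\tau_{j-1}(Y^{x,\tau}_{t_{j-1}})(f(Y^{x,\tau}_{t_{j-1}})-\mu(f))]\to v^2$.

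\emph{Step 3 (identification of the limit).} By the Markov property $P^\tau_{t_{j-1},t_i}f(Y^{x,\tau}_{t_{j-1}})=\E[f(Y^{x,\tau}_{t_i})\mid\mathcal F_{t_{j-1}}]$ for $i\ge j-1$, so Fubini and the tower property (legitimate under $\frac p2+\frac{p\tilde q}{2}+\gamma(\tilde d\vee\kappa)\le r\wedge q$ and conditions (i)--(ii)) give
\begin{align*}
\E\big[F^\tau_{j-1}(Y^{x,\tau}_{t_{j-1}})(f(Y^{x,\tau}_{t_{j-1}})-\mu(f))\big]=\sum_{i\ge j-1}\tau_i\,\E\big[(f(Y^{x,\tau}_{t_{j-1}})-\mu(f))(P^\tau_{t_{j-1},t_i}f(Y^{x,\tau}_{t_{j-1}})-\mu(f))\big],
\end{align*}
an autocovariance series. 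I then approximate each summand in three stages: (a) replace $\mu^{x,\tau}_{t_{j-1}}$ by $\mu$, using \eqref{distanceOfTwoMeasure}, \eqref{equ_apprIM} applied to the product $(f-\mu(f))(P^\tau_{t_{j-1},t_i}f-\mu(f))$, whose H\"older regularity (with growth exponents controlled by $p,\tilde q,\tilde d,\kappa$) comes from \Cref{a2}(ii) — this is where $\kappa$ enters; (b) replace $P^\tau_{t_{j-1},t_i}f$ by the homogeneous $P_{t_i-t_{j-1}}f$, with error bounded via \Cref{a3}; (c) recognize $\sum_{i\ge j-1}\tau_i\,\mu((f-\mu(f))(P_{t_i-t_{j-1}}f-\mu(f)))$ as a Riemann sum of $\int_0^\infty\mu((f-\mu(f))(P_sf-\mu(f)))\,\rd s=\tfrac12 v^2$ (finite by the estimate following \eqref{equ_apprExactPttoMu}, which uses $\frac p2(1+\tilde r)+(1+\beta)\gamma_1\le r$), the Riemann error controlled by the modulus of continuity of $s\mapsto P_sf$ from \Cref{a1}(iii) — this is where $\tilde l$ enters. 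Multiplying the accumulated errors by $\tau_{j-1}$, summing over $j\le n$ and dividing by $t_n$ produces exactly expressions of the forms appearing on the left sides of conditions (i)--(iv) of \Cref{thm_numerLIL}, which force them to $0$; the leading term equals $\frac{v^2}{t_n}\sum_{j=1}^n\tau_{j-1}=\frac{v^2 t_{n-1}}{t_n}\to v^2$, which is the assertion.

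The main difficulty is Step 3. Because $P^\tau$ is not time-homogeneous there is no unified functional representation and the limit cannot be extracted from a semigroup spectral gap, so the comparison must pass through the exact semigroup after paying the \Cref{a3} cost, and the Riemann-sum step must be carried out on $P_s$ rather than on $P^\tau$. Each of the three approximations trades a gain in a power of $\tau_i$ (or of $\rho$, $\rho^\tau$) against a polynomial growth in $\|Y^{x,\tau}_{t_{j-1}}\|$ whose degree mixes $p,\tilde q,\tilde d,\kappa,\tilde r,l$, and one must choose the H\"older conjugate exponents so that the moment bounds of Assumptions \ref{a1}(i) and \ref{a2}(i) absorb the growth while the residual rates remain summable / Ces\`aro-negligible after the $\tau_{j-1}$-weighted averaging. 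This ``coupling of the time-step sequence with the regularity parameters'' is precisely what dictates the long list of exponent inequalities and the step-size conditions (i)--(iv) in the statement.
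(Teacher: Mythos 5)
Your proposal is correct and follows essentially the same route as the paper: the orthogonality reduction to the original grid, the variance identity $\E[|\mathcal Z^{x,\tau}_j|^2]=a_j-a_{j-1}+2\tau_{j-1}\E[F^\tau_{j-1}(Y^{x,\tau}_{t_{j-1}})(f(Y^{x,\tau}_{t_{j-1}})-\mu(f))]-\tau_{j-1}^2\E[(f(Y^{x,\tau}_{t_{j-1}})-\mu(f))^2]$ is exactly the paper's decomposition into $-\uppercase\expandafter{\romannumeral1}_n+\uppercase\expandafter{\romannumeral2}_n+\uppercase\expandafter{\romannumeral3}_n$, and your three-stage identification of the cross-term limit (invariant-measure replacement via the regularity of the Poisson-type function, $P^\tau$-to-$P$ comparison via Assumption \ref{a3}, Riemann-sum error via Assumption \ref{a1}(iii)) matches the paper's $\I_{1,k}$/$\I_{2,k}$ estimates and their Ces\`aro absorption under conditions (\romannumeral1)--(\romannumeral4). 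The only deviations are cosmetic (a Toeplitz argument instead of $\sum_k\tau_k^2<\infty$ for the quadratic term, and termwise rather than summed measure replacement), and both are valid.
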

\begin{proof}
Using the property of  martingale differences that 
\begin{align} \label{equ_orthnormalZ}
\E [\mathcal{Z}^{x,\tau}_k \mathcal{Z}^{x,\tau}_j ]= \E [ \E[\mathcal{Z}^{x,\tau}_k \mathcal{Z}^{x,\tau}_j | \mathcal{F}_{j-1}]]=0, \quad  k<j,
\end{align} 
we obtain $\E [\mathcal{Z}^{x,\tau}_k \mathcal{Z}^{x,\tau}_j ]=0$ 
 for $k\neq j$. 
Since $\tilde{\mathcal{Z}}^{x,\tau}_{k}= \sum^{n_{(k)}}_{j=n_{(k-1)}+1} \mathcal{Z}^{x,\tau}_{j}$,
it follows that  $\sum^N_{k=1} \E  \big[\big| \tilde{\mathcal Z}^{x,\tau}_{k} \big|^2\big] = \sum^{n_{(N)}}_{k=1} \E  \big[\big| \mathcal Z^{x,\tau}_{k} \big|^2\big]$. Hence, to give the $L^2$-limit for  $\{ \tilde{\mathcal Z}^{x,\tau}_{k}\}_{k\in\N}$ as in \eqref{equ_SquareZLimit}, 
it suffices to show that 
\begin{align} \label{equ_SquareGeneralZLimit}
	 \lim_{n\to \infty} \frac{1}{t_n} \sum^n_{k=1} \E  \Big[\big| \mathcal Z^{x,\tau}_{k} \big|^2\Big]  = v^2.
\end{align}
	Based on \eqref{equ_splitOfZ}, 
	we obtain
	\begin{align} \label{equ_GuExpression}
	\E\Big[\big| \mathcal{Z}^{x,\tau}_{k} \big|^2 \Big\vert \mathcal{F}_{t_{k-1}}\Big] 
		&= -\tau_{k-1}^2|f(Y_{t_{k-1}}^{x,\tau})-\mu(f)|^2 +	\E \Big[ \Big| \sum^\infty_{i=k} \tau_i \big( \E \big[ f(Y_{t_{i}}^{x,\tau}) \vert \mathcal{F}_{t_k} \big] -\mu(f)\big)\Big|^2  \Big\vert \mathcal{F}_{t_{k-1}} \Big]  \nn\\
		&\quad - \Big| \sum^\infty_{i=k-1} \tau_i \big( \E \big[ f(Y_{t_{i}}^{x,\tau}) \big\vert \mathcal{F}_{t_{k-1}} \big] -\mu(f)\big)\Big|^2 \nn \\
		&\quad + 2\Big[\tau_{k-1} \big( f(Y_{t_{k-1}}^{x,\tau})-\mu(f)\big) \Big]   \Big[ \sum^\infty_{i=k-1} \tau_i \big( \E \big[ f(Y_{t_{i}}^{x,\tau}) \big\vert \mathcal{F}_{t_{k-1}} \big] -\mu(f)\big) \Big],
	\end{align}
which yields that 
		\begin{align*} 
	\E \Big[	\big|\mathcal Z^{x,\tau}_{k} \big|^2 \Big] &= \E \Big[ \E \Big[	\big|\mathcal Z^{x,\tau}_{k} \big|^2 \Big\vert  \mathcal{F}_{t_{k-1}}  \Big]	\Big] \\
	&=   -\tau_{k-1}^2 \E \Big[ \big| f(Y_{t_{k-1}}^{x,\tau})-\mu(f) \big|^2 \Big] +	\E \Big[ \Big| \sum^\infty_{i=k} \tau_i \big( \E \big[ f(Y_{t_{i}}^{x,\tau}) \vert \mathcal{F}_{t_k} \big] -\mu(f)\big)\Big|^2 \Big] \\
	&\quad - 	\E \Big[ \Big| \sum^\infty_{i=k-1} \tau_i \big( \E \big[ f(Y_{t_{i}}^{x,\tau}) \big\vert \mathcal{F}_{t_{k-1}}  \big] -\mu(f)\big)\Big|^2 \Big] \\
	&\quad + 2\tau_{k-1} \E\Big[ \big( f(Y_{t_{k-1}}^{x,\tau})-\mu(f)\big)  \Big( \sum^\infty_{i=k-1} \tau_i \big( \E \big[ f(Y_{t_{i}}^{x,\tau}) \big\vert \mathcal{F}_{t_{k-1}}  \big] -\mu(f)\big) \Big) \Big]. 
		\end{align*}
Therefore,
	\begin{align*} 
 &\quad \Big(\sum^n_{k=1} \tau_k\Big)^{-1} \Big( \sum^n_{k=1} \E \big[ |\mathcal Z^{x,\tau}_{k}|^2 \big]\Big) \\
 &=
  - \Big(\sum^n_{k=1} \tau_k\Big)^{-1} \Big( \sum^{n}_{k=1} \tau_{k-1}^2 \E \Big[ \big| f(Y_{t_{k-1}}^{x,\tau})-\mu(f) \big|^2 \Big] \Big) \\
  &\quad +\Big(\sum^n_{k=1} \tau_k\Big)^{-1}  \sum^{n}_{k=1}\Big(	\E \Big[ \Big| \sum^\infty_{i=k} \tau_i \big( P^\tau_{t_k, t_i}  f(Y_{t_{k}}^{x,\tau})-\mu(f)\big)\Big|^2 \Big] 
- 	\E \Big[ \Big| \sum^\infty_{i=k-1} \tau_i \big( P^\tau_{t_{k-1}, t_i}  f(Y_{t_{k-1}}^{x,\tau}) -\mu(f)\big)\Big|^2 \Big]  \Big)\\
 &\quad + 2 \Big(\sum^n_{k=1} \tau_k\Big)^{-1} \sum^{n}_{k=1} \Big( \tau_{k-1} \E\Big[ \big( f(Y_{t_{k-1}}^{x,\tau})-\mu(f)\big)  \big[ \sum^\infty_{i=k-1} \tau_i \big( P^\tau_{t_{k-1}, t_i}  f(Y_{t_{k-1}}^{x,\tau})-\mu(f)\big) \big] \Big] \Big) \\
 &=: -\uppercase\expandafter{\romannumeral1}_n+ \uppercase\expandafter{\romannumeral2}_n+\uppercase\expandafter{\romannumeral3}_n.
	\end{align*}
In the following, we split the proof into three steps.

{\textit {Step 1: Proving  $\lim_{n\to \infty}\uppercase\expandafter{\romannumeral1}_n = 0$.}}
For $\uppercase\expandafter{\romannumeral1}_n$, we have
	\begin{align}  \label{equ_estimateOfMainPart1}
		|	\uppercase\expandafter{\romannumeral1}_n |
		&=  \Big(\sum^n_{k=1} \tau_k\Big)^{-1} \Big( \sum^{n}_{k=1} \tau_{k-1}^2 \E \Big[ \big| f(Y_{t_{k-1}}^{x,\tau})-\mu(f) \big|^2 \Big] \Big) \nn \\
		&\leq K \|f\|^2_{p,\gamma} (1+\|x\|^{p\tilde{q}})   \Big(\sum^n_{k=1} \tau_k\Big)^{-1} \Big( \sum^{n}_{k=1} \tau_{k-1}^2  \Big),
	\end{align}
where we use
\begin{align} \label{equ_apprftomu}
 \E \Big[ \big| f(Y_{t_{k-1}}^{x,\tau})-\mu(f) \big|^2 \Big]  &\leq  K \E \Big[\|f\|^2_{p,\gamma} (1+\|Y_{t_{k-1}}^{x,\tau}\|^p) \Big] 
  \leq K \|f\|^2_{p,\gamma} (1+\|x\|^{p\tilde{q}}).
\end{align}
We notice that condition (\romannumeral 1) in \Cref{thm_numerLIL} implies that $\sum^{\infty}_{k=1} \tau_{k-1}^2  \leq  \bar{\tau}^{1-\gamma\alpha } \sum^{\infty}_{k=1} \tau_{k-1}^{1+\gamma\alpha} <\infty$.
Combining this with \eqref{equ_estimateOfMainPart1},  we prove that $\lim_{n\to \infty} |	\uppercase\expandafter{\romannumeral1}_n |=0$.

{\textit {Step 2: Proving  $\lim_{n\to \infty}\uppercase\expandafter{\romannumeral2}_n = 0$.}}
For $\uppercase\expandafter{\romannumeral2}_n$, we see that the summation part in  $\uppercase\expandafter{\romannumeral2}_n$ can be rewritten as
\begin{align*}
	 &\quad \sum^{n}_{k=1}\Big(	\E \Big[ \Big| \sum^\infty_{i=k} \tau_i \big( P^\tau_{t_{k}, t_i}  f(Y_{t_{k}}^{x,\tau})  -\mu(f)\big)\Big|^2 \Big] 
	- 	\E \Big[ \Big| \sum^\infty_{i=k-1} \tau_i \big( P^\tau_{t_{k-1}, t_i}  f(Y_{t_{k-1}}^{x,\tau})  -\mu(f)\big)\Big|^2 \Big]  \Big) \\
	&=  \E \Big[ \Big| \sum^\infty_{i=n} \tau_i \big(P^\tau_{t_{n}, t_i}  f(Y_{t_{n}}^{x,\tau})  -\mu(f)\big)\Big|^2 \Big] - \E \Big[ \Big| \sum^\infty_{i=0} \tau_i \big( P^\tau_{t_i} f(x) -\mu(f)\big)\Big|^2 \Big].
\end{align*}
Using \eqref{equ_PtkApprmuFromtm},  \eqref{equ_PtkApprmu}, and assumptions that $p\tilde{q}+2\tilde{d}\gamma \leq q$, 
	$\sum^\infty_{i=1}  \tau_{i}^{1+\gamma \alpha} <\infty $,  $\sum^\infty_{i=1}  \tau_{i} (\rho(t_{i}))^\gamma<\infty $,  and 
	$\sum^\infty_{i=n}  \tau_{i} (\rho(t_{i}-t_n))^\gamma<\infty$, we obtain 
	\begin{align*}  
	&\quad	\E \Big[ \Big| \sum^\infty_{i=0} \tau_i \big( P^\tau_{t_i} f(x) -\mu(f)\big)\Big|^2 \Big] \\
	&\leq   \E \Big[ \Big| \sum^\infty_{i=1} \big(\tau_i  K  \|f\|_{p,\gamma}  (1+\|x \|^{\frac{p\tilde{q}}{2}+\tilde{d}\gamma})  \max\big\{ \tau_{i}^{\gamma \alpha}, (\rho(t_{i}))^\gamma \big\} \big) \Big|^2 \Big] 
	\leq   K  \|f\|^2_{p,\gamma} \big(1+\|x \|^{p\tilde{q}^2+2\tilde{q}\tilde{d}\gamma} \big)
	\end{align*}
and
	\begin{align} \label{equ_summationOfSquareFromtm}
		&\quad	\E \Big[ \Big| \sum^\infty_{i=n} \tau_i \big( P^\tau_{t_{n}, t_i}  f(Y_{t_{n}}^{x,\tau})  -\mu(f)\big)\Big|^2 \Big] \nn \\
		&\leq   \E \Big[ \Big| \sum^\infty_{i=n} \big(\tau_i  K  \|f\|_{p,\gamma}  (1+\|Y_{t_{n}}^{x,\tau} \|^{\frac{p\tilde{q}}{2}+\tilde{d}\gamma})  \max\big\{ \tau_{i}^{\gamma \alpha}, (\rho(t_{i}-t_n))^\gamma \big\} \big) \Big|^2 \Big] \nn \\
		&\leq   K  \|f\|^2_{p,\gamma}  \big(1+\|x \|^{p\tilde{q}^2+2\tilde{q}\tilde{d}\gamma} \big).
	\end{align}
Hence, 
\begin{align*}
	 | \uppercase\expandafter{\romannumeral2}_n | 
	&\leq \Big(\sum^n_{k=1} \tau_k\Big)^{-1}\Big(   \E \Big[ \Big| \sum^\infty_{i=n} \tau_i \big(P^\tau_{t_{n}, t_i}  f(Y_{t_{n}}^{x,\tau})  -\mu(f)\big)\Big|^2 \Big] +  \E \Big[ \Big| \sum^\infty_{i=0} \tau_i \big( \E \big[ f(Y_{t_{i}}^{x,\tau}) \big] -\mu(f)\big)\Big|^2 \Big] \Big) \\
	&\leq  K  \|f\|^2_{p,\gamma}  (1+\|x \|^{p\tilde{q}^2+2\tilde{q}\tilde{d}\gamma}) \Big(\sum^n_{k=1} \tau_k\Big)^{-1}.
\end{align*}
Since the step-size sequence $\{\tau_k\}_{k\in\N}$ satisfies $\sum^n_{k=1} \tau_k \to \infty$ as $n\to \infty$, it shows that
$ | \uppercase\expandafter{\romannumeral2}_n |  \to 0 $ as $n\to \infty$.

{\textit {Step 3: Proving  $\lim_{n\to \infty}\uppercase\expandafter{\romannumeral3}_n = v^2$.}}
Note that
	\begin{align}  \label{equ_Mainpart3}
&\quad	\bigg|  2\E\Big[ \big( f(Y_{t_{k-1}}^{x,\tau})-\mu(f)\big)  \big[ \sum^\infty_{i=k-1} \tau_i \big( P^\tau_{t_{k-1}, t_i}  f(Y_{t_{k-1}}^{x,\tau})  -\mu(f)\big) \big] \Big] -v^2\bigg|\nn	\\
&\leq  	2\bigg|  \E\Big[ \big( f(Y_{t_{k-1}}^{x,\tau})-\mu(f)\big)  \big[ \sum^\infty_{i=k-1} \tau_i \big( P^\tau_{t_{k-1}, t_i}  f(Y_{t_{k-1}}^{x,\tau})  -\mu(f)\big) \big] \Big]\nn  \\
&\quad \quad -\mu\big((f-\mu(f)) \big[ \sum^\infty_{i=k-1} \tau_i \big( P^\tau_{t_{k-1},t_i} f-\mu(f)\big) \big]\big) \bigg| \nn\\
&\quad + 2\bigg| \mu\big((f-\mu(f)) \big[ \sum^\infty_{i=k-1} \tau_i \big( P^\tau_{t_{k-1},t_i} f-\mu(f)\big) \big]\big) - \mu\big((f-\mu(f))\int_{0}^{\infty}(P_{t}f-\mu(f))\mathrm dt\big)  \bigg|  \nn \\
&=: \I_{1,k}+\I_{2,k}.
	\end{align}

{ \bf{Estimate of $\I_{1,k}$.}}
For $\I_{1,k}$, we define the function 
\begin{align} \label{equ_definitionF}
	F^\tau_{k-1} (u) :=
	 \big( f(u)-\mu(f) \big) \big[ \sum^\infty_{i=k-1} \tau_i \big(P^\tau_{t_{k-1},t_i} f(u )-\mu(f)\big) \big]
\end{align}
for any $u\in E$. We claim that there exists some $\tilde{p}_F\geq 1$ such that $\|F^\tau_{k-1}\|_{\tilde{p}_F,\gamma} \leq K\|f\|^2_{p,\gamma}$. 
To prove this claim, we first apply \eqref{equ_PtkApprmuFromtm} and use the same technique as in \eqref{equ_integrableZk} to obtain
\begin{align*}
	| F^\tau_{k-1} (u)| &\leq  K\|f\|_{p,\gamma}^2 (1+\|u\|^{\frac p2+ \frac{p\tilde{q}}{2}+\tilde{d}\gamma}).
\end{align*}
This implies
\begin{align} \label{equ_FinCpgamma1}
	\frac{	| F^\tau_{k-1} (u)| }{1+\|u\|^{\frac p2+\frac{p\tilde{q}}{2}+\tilde{d}\gamma}} \leq K \|f\|_{p,\gamma}^2.
\end{align}
Using Assumption \ref{a2} (\romannumeral 2), we derive that for any $u_1, u_2 \in E$, $f\in \mathcal{C}_{p,\gamma}$ and $i\geq k-1$, 
	\begin{align}  \label{equ_PtkxApprPtky}
	&\quad \big|P^\tau_{t_{k-1},t_i} f(u_1 )-P^\tau_{t_{k-1},t_i} f(u_2)\big|  \nn\\
	&\leq   K \|f\|_{p,\gamma} \big( 1+\E \big[ \|Y^{u_1, \tau}_{t_{k-1}, t_i}\|^p \big]+\E\big[ \|Y^{u_2, \tau}_{t_{k-1}, t_i}\|^p \big]  \big)^{\frac 12} (\mathbb{W}_2 ( \mu^{u_1,\tau}_{ t_{k-1}, t_i},  \mu^{u_2,\tau}_{ t_{k-1}, t_i}) )^\gamma \nn \\
		&\leq K \|f\|_{p,\gamma} \|u_1-u_2\|^\gamma \big(1+ \|u_1\|^{\frac{p\tilde{q}}{2} +\gamma \kappa }+\|u_2\|^{\frac{p\tilde{q}}{2} +\gamma \kappa }\big) (\rho^\tau (t_i-t_{k-1}))^\gamma,
\end{align}
where we use
\begin{align*}
\mathbb{W}_2 ( \mu^{u_1,\tau}_{ t_{k-1}, t_i},  \mu^{u_2,\tau}_{ t_{k-1}, t_i}) &\leq\big( \E\big[ \|Y^{u_1, \tau}_{t_{k-1}, t_i}-Y^{u_2, \tau}_{t_{k-1}, t_i}\|^2 \big]\big)^{\frac 12} \\
&\leq K\| u_1-u_2\| \big(1+\|u_1\|^\kappa+ \|u_2\|^\kappa \big) \rho^\tau (t_i-t_{k-1}).
\end{align*} 
For $u_1, u_2 \in E$, using \eqref{equ_PtkApprmuFromtm}, \eqref{equ_PtkxApprPtky}, and conditions $\sum^\infty_{i=k-1} \tau_i(\rho(t_i-t_{k-1}))^\gamma <\infty$ and $\sum^\infty_{i=k-1} \tau_i(\rho^\tau(t_i-t_{k-1}))^\gamma <\infty$, we obtain
\begin{align*}  
 | F^\tau_{k-1} (u_1) -  F^\tau_{k-1} (u_2) | 
	&\leq \big| f(u_1)-f(u_2)\big| \Big(  \sum^\infty_{i=k-1} \tau_i \big|P^\tau_{t_{k-1},t_i} f(u_1 )-\mu(f)\big|  \Big) \\
	&\quad +   \big| (f(u_2)-\mu(f))\big| \Big(  \sum^\infty_{i=k-1} \tau_i \big|P^\tau_{t_{k-1},t_i} f(u_1 )-P^\tau_{t_{k-1},t_i} f(u_2)\big| \Big) \\
	&\leq  K \|f\|^2_{p,\gamma} \|u_1-u_2\|^\gamma\big(1+ \|u_1\|^{(\frac p2+\frac{p\tilde{q}}{2}+\tilde{d}\gamma ) \vee (\frac{p\tilde{q}}{2} +\gamma \kappa ) }+\|u_2\|^{\frac p2 +\frac{p\tilde{q}}{2} +\gamma \kappa }\big),
\end{align*}
which proves the claim by setting
$\tilde{p}_F:= \frac p2+\frac{p\tilde{q}}{2}+ \gamma(\tilde{d}  \vee \kappa)$.

With the assumption that $\tilde{p}_F\leq r\wedge q$, we use \eqref{equ_PtkApprmu} and arrive at
	\begin{align} \label{equ_Mainpart3_2}
		\I_{1,k}
		&= 2 \Big| P^\tau_{t_{k-1}}	F^\tau_{k-1} (x )  -\mu (F^\tau_{k-1}  )\Big| \nn\\
		&\leq  K  \|F^\tau_{k-1}\|_{\tilde{p}_F,\gamma}  (1+\|x\|^{\frac{\tilde{p}_F\tilde{q}}{2}+\tilde{d}\gamma})  \max\big\{ \tau_{k-1}^{\gamma \alpha}, (\rho(t_{k-1}))^\gamma \big\} \nn \\
		&\leq K\|f\|^2_{p,\gamma} (1+\|x\|^{\frac{\tilde{p}_F\tilde{q}}{2}+\tilde{d}\gamma})  \max\big\{ \tau_{k-1}^{\gamma \alpha}, (\rho(t_{k-1}))^\gamma \big\}.
	\end{align}
	
{ \bf{Estimate of $\I_{2,k}$.}}
For $\I_{2,k}$, we rewrite it as follows
\begin{align}\label{equ_expressI2k}
	\I_{2,k} &=2\bigg| \mu\big((f-\mu(f)) \big[ \sum^\infty_{i=k-1} \tau_i \big( P^\tau_{t_{k-1},t_i} f-\mu(f)\big) \big]\big) - \mu\big((f-\mu(f))\int_{t_{k-1}}^{\infty}(P_{t_{k-1},t} f-\mu(f))\mathrm dt\big)  \bigg| \nn \\
	&\leq 2 \mu \big(|f-\mu(f)| \sum^\infty_{i=k-1}\big[  \int^{t_{i+1}}_{t_{i}} \big| P^\tau_{t_{k-1},t_i} f-P_{t_{k-1},t_i}f  \big| +\big| P_{t_{k-1},t_i} f-P_{t_{k-1},t}f  \big|  \ \rd t \big] \big).
\end{align}
For any given $u\in E$, we know that  for $p \in [1, r \wedge q]$ and $\gamma \in [\gamma_1,1]$, 
\begin{align} \label{equ_apprPdeltaPfromtm}
	&\quad  \big| P^\tau_{t_{k-1},t_i} f(u)-P_{t_{k-1},t_i}f (u) \big| \nn \\
	 &\leq  \|f\|_{p,\gamma} \big(1+  \E\big[\|Y_{t_{k-1},t_i}^{u,\tau}\|^p\big] + \E\big[\|X_{t_{k-1},t_i}^{u}\|^p\big] \big)^{\frac{1}{2}}(\mathbb W_{2}(\mu_{t_{k-1},t_i}^{u,\tau},\mu_{t_{k-1},t_i}^{u}))^{\gamma}  \nn \\
	 &\leq K  \|f\|_{p,\gamma}  \big(1+\|u\|^{(\frac{p}{2}+\gamma) (\tilde{r} \vee \tilde{q})} \big)  \tau_i^{\gamma \alpha}. 
\end{align}
With Assumption \ref{a1} (\romannumeral 3) and the time-homogeneous Markov property of $\{X^{u}_{t}\}_{t\geq 0}$, we deduce that for any $t\geq t_i$ and $u\in E$, 
\begin{align*}
		\mathbb{W}_2 (\mu_{t_{k-1},t_i}^{u},  \mu_{t_{k-1},t}^{u}) &\leq \big(\E [ \| X^u_{t_{k-1},t_i}-X^u_{t_{k-1},t}\|^2] \big)^{\frac 12} \\
		&=\big(\E \big[ \| X^x_{t_{i},t}-x\|^2 \big|_{x=X^u_{t_{k-1},t_i}} \big]\big)^{\frac 12} \\
		&\leq K(1+\E[\|X^u_{t_{k-1},t_i}\|^l]) (t-t_i)^{\tilde{l}} 
		\leq K(1+\|u\|^{l\tilde{r}}) (t-t_i)^{\tilde{l}}.
\end{align*}
Then we have
	\begin{align} \label{equ_apprPsmalltimechange}
	&\quad  \big| P_{t_{k-1},t} f(u)-P_{t_{k-1},t_i}f (u) \big| \nn \\
	&\leq  \|f\|_{p,\gamma} \big(1+  \E\big[\|X^{u}_{t_{k-1},t}\|^p\big] + \E\big[\|X^{u}_{t_{k-1},t_i}\|^p\big] \big)^{\frac{1}{2}}(\mathbb W_{2}(\mu_{t_{k-1},t}^{u},\mu_{t_{k-1},t_i}^{u}))^{\gamma} \nn \\
	&\leq K   \|f\|_{p,\gamma} \big(1+\|u\|^{(\frac{p}{2}+l\gamma)\tilde{r}} \big)   (t-t_i)^{\tilde{l}\gamma}.  
\end{align}
For simplicity, we denote $\bar{p}:= [(\frac{p}{2}+\gamma) (\tilde{r} \vee \tilde{q}) ]\vee[(\frac{p}{2}+l\gamma)\tilde{r}]$.
By plugging  \eqref{equ_apprPdeltaPfromtm} and \eqref{equ_apprPsmalltimechange} into \eqref{equ_expressI2k}, 
we arrive at
\begin{align} \label{equ_Mainpart3_3}
	\I_{2,k} 
	&\leq   2  \mu \Big(|f-\mu(f)|  \big[ K  \|f\|_{p,\gamma} \big(1+\|\cdot \|^{\bar{p}}  \big)  \big] \sum^\infty_{i=k-1}\big[  \int^{t_{i+1}}_{t_{i}} \big(  \tau_i^{\gamma \alpha}+ (t-t_i)^{\tilde{l}\gamma}  \big)  \rd t \big] \Big)\nn \\
	&\leq    2 \mu \Big(\|f\|_{p,\gamma}(1+\|\cdot\|^{\frac p2})  \big[K  \|f\|_{p,\gamma} \big(1+\|\cdot \|^{\bar{p}}  \big) \big]    \Big( \sum^\infty_{i=k-1}  \tau_{i+1} \tau_{i}^{\gamma \alpha} +  \sum^\infty_{i=k} \tau_i^ {1+\tilde{l}\gamma} \Big)  \Big)\nn \\
	&\leq  K  \|f\|^2_{p,\gamma}   \Big( \sum^\infty_{i=k-1} \tau_{i+1} \tau_{i}^{\gamma \alpha} +  \sum^\infty_{i=k} \tau_i^ {1+\tilde{l}\gamma} \Big),
\end{align}
where  in the last step we use  $\frac p2+\bar{p} \leq r$ and  \eqref{equ_boundOfNormOfmu}.

Combining \eqref{equ_Mainpart3}, \eqref{equ_Mainpart3_2}  and  \eqref{equ_Mainpart3_3}, it holds that 

\begin{align*}
&\quad 	| \uppercase\expandafter{\romannumeral3}_n-v^2 | \\
	&\leq \Big(\sum^n_{k=1} \tau_k\Big)^{-1} \sum^{n}_{k=1} \big[ \tau_{k-1} \big( \I_{1,k}+\I_{2,k} \big) \big] \\
	&\leq K  \|f\|^2_{p,\gamma}  (1+\| x \|^{\frac{\tilde{p}_F\tilde{q}}{2}+\tilde{d}\gamma} )  \\
	&\quad \times \Big(\sum^n_{k=1} \tau_k\Big)^{-1} \Big( \sum^{n}_{k=1}  \tau_{k-1}^{1+\gamma \alpha} +  \sum^{n}_{k=1}  \tau_{k-1}(\rho(t_{k-1}))^\gamma+ \sum^{n}_{k=1} \tau_{k-1}\big( \sum^\infty_{i=k-1} \tau_{i+1} \tau_{i}^{\gamma \alpha} +  \sum^\infty_{i=k} \tau_i^ {1+\tilde{l}\gamma} \big)\Big).
\end{align*}
With assumptions of the step-size sequence $\tau$, we see that 
	$\lim_{n\to \infty} (\sum^n_{k=1} \tau_k)^{-1}  (\sum^{n}_{k=1}  \tau_{k-1}^{1+ \gamma \alpha}) =0$ and 
	$\lim_{n\to \infty} (\sum^n_{k=1} \tau_k)^{-1}  (\sum^{n}_{k=1}  \tau_{k-1}(\rho(t_{k-1}))^\gamma) =0$. 
	Combining with condition (\romannumeral 4) in \Cref{thm_numerLIL},
	we have
$
\lim_{n\to \infty}  	|\uppercase\expandafter{\romannumeral3}_n -v^2| =0.
$

Combining the above estimates, we finish the proof. 
\end{proof}
Next, based on Propositions  \ref{prop_ZkIntegrable} and \ref{prop_ZkSquareMoment}, we prove the almost sure limit of $\{\tilde{\mathcal{Z}}^{x,\tau}_{N} \}_{N\in\N}$ in the following proposition.
\begin{prop} \label{prop_almostsureZk}
	Let Assmptions \ref{a1}--\ref{a3} hold. 
	Suppose that there exists an  integer $c\geq 2$ and constants $p\in[1, r\wedge q]$, $\gamma \in[\gamma_1,1]$ such that $\frac p2(1+\tilde{r})+(1+\beta)\gamma_1\leq r$, 
	$4c(\frac{p\tilde{q}}{2}+\tilde{d}\gamma) \leq q$  and $\frac p2+ ([(\frac{p}{2}+\gamma )(\tilde{r} \vee \tilde{q})]\vee [( \frac{p}{2} + l \gamma)\tilde{r}])\leq r$. 
	Let $\tilde{p}_F:= \frac p2+\frac{p\tilde{q}}{2}+ \gamma(\tilde{d}  \vee \kappa)$ 
	and assume that  $\tilde{p}_F\leq r \wedge q$ and
	$2c (\frac{\tilde{p}_F\tilde{q}}{2}+\tilde{d}\gamma) \leq q$.
	If the step-size sequence $\tau$ satisfies conditions (\romannumeral 1)--(\romannumeral 4) in \Cref{thm_numerLIL}, 
	then, for
	$f\in \mathcal{C}_{p,\gamma}$, we have 
	\begin{align*}
			\lim_{N\to \infty} \frac{1}{\tilde{t}_N} \Big(\sum^{N}_{k=1} |\tilde{\mathcal Z}^{x,\tau}_k|^2\Big) =v^2,\quad \text{a.s.}
	\end{align*}
\end{prop}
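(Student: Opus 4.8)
The plan is to isolate from $\sum_{k=1}^N|\tilde{\mathcal Z}^{x,\tau}_k|^2$ a principal term whose expectation converges to $v^2\tilde t_N$ by \Cref{prop_ZkSquareMoment}, and to show that all the remaining terms are $o(\tilde t_N)$ almost surely; throughout only the bound $N-\bar\tau\le\tilde t_N\le N$ is used. Put $\mathcal G_k:=\mathcal F_{t_{n_{(k)}}}$ (so $\{\tilde{\mathscr M}^{x,\tau}_N\}$ is a $\{\mathcal G_N\}$-martingale and $\tilde{\mathcal Z}^{x,\tau}_k$ is $\mathcal G_k$-measurable), $G_k:=\E[|\tilde{\mathcal Z}^{x,\tau}_k|^2\mid\mathcal G_{k-1}]$, and $\mathscr A^{x,\tau}_n:=\sum_{i\ge n}\tau_i\big(P^\tau_{t_n,t_i}f(Y^{x,\tau}_{t_n})-\mu(f)\big)$. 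Applying the conditional identity underlying the proof of \Cref{prop_ZkSquareMoment} (cf.~\eqref{equ_GuExpression}) to each $\mathcal Z^{x,\tau}_j$, summing over one block, telescoping the ``$(\mathscr A^{x,\tau})^2$''-increments within a block and re-indexing in $k$, one obtains $\sum_{k=1}^N|\tilde{\mathcal Z}^{x,\tau}_k|^2=\mathrm I_N+\mathrm{II}_N-\mathrm{III}_N+\mathrm{IV}_N-\mathrm V_N$ with
\begin{align*}
&\mathrm I_N:=\sum_{k=1}^N\big(|\tilde{\mathcal Z}^{x,\tau}_k|^2-G_k\big),\qquad
\mathrm{II}_N:=\E\big[(\mathscr A^{x,\tau}_{n_{(N)}})^2\mid\mathcal G_{N-1}\big]-(\mathscr A^{x,\tau}_{0})^2,\\
&\mathrm{III}_N:=\sum_{k=1}^{N-1}\Big((\mathscr A^{x,\tau}_{n_{(k)}})^2-\E\big[(\mathscr A^{x,\tau}_{n_{(k)}})^2\mid\mathcal G_{k-1}\big]\Big),\qquad
\mathrm V_N:=\sum_{k=1}^N\sum_{j=n_{(k-1)}+1}^{n_{(k)}}\tau_{j-1}^2\,\E\big[\,|f(Y^{x,\tau}_{t_{j-1}})-\mu(f)|^2\mid\mathcal G_{k-1}\big],\\
&\mathrm{IV}_N:=2\sum_{k=1}^N\sum_{j=n_{(k-1)}+1}^{n_{(k)}}\tau_{j-1}\,\E\big[F^\tau_{j-1}(Y^{x,\tau}_{t_{j-1}})\mid\mathcal G_{k-1}\big],
\end{align*}
$F^\tau_{j-1}$ being as in~\eqref{equ_definitionF}. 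It thus suffices to prove $\mathrm I_N/\tilde t_N,\ \mathrm{II}_N/\tilde t_N,\ \mathrm{III}_N/\tilde t_N,\ \mathrm V_N/\tilde t_N\to0$ and $\mathrm{IV}_N/\tilde t_N\to v^2$ almost surely.

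The term $\mathrm I_N$ is a $\{\mathcal G_k\}$-martingale with $\E[\,|\,|\tilde{\mathcal Z}^{x,\tau}_k|^2-G_k|^{2c}\,]\le 2^{2c}\E[|\tilde{\mathcal Z}^{x,\tau}_k|^{4c}]\le K$ uniformly in $k$, by \Cref{rem_momentboundedness} applied with exponent $4c$ (here $4c(\tfrac{p\tilde q}{2}+\tilde d\gamma)\le q$ is exactly what is needed); Rosenthal's inequality together with $(\sum_{k=1}^N a_k)^c\le N^{c-1}\sum_{k=1}^N a_k^c$ applied to its predictable quadratic variation gives $\E[|\mathrm I_N|^{2c}]\le KN^c$, hence $\PP(|\mathrm I_N|>\varepsilon\tilde t_N)\le K\varepsilon^{-2c}N^{-c}$ for large $N$, which is summable since $c\ge2$, so Borel--Cantelli and $\varepsilon\downarrow0$ give $\mathrm I_N/\tilde t_N\to0$ a.s. The term $\mathrm{III}_N$ is likewise a $\{\mathcal G_k\}$-martingale whose increments are bounded in $L^{2c}$ — because $u\mapsto\sum_{i\ge n}\tau_i(P^\tau_{t_n,t_i}f(u)-\mu(f))$ belongs, uniformly in $n$, to a fixed class $\mathcal C_{\bar p,\gamma}$ with growth exponent at most $\tfrac{p\tilde q}{2}+\tilde d\gamma$ (by~\eqref{equ_PtkApprmuFromtm}, \eqref{equ_PtkxApprPtky} and conditions (i)--(iii)) — and the same argument gives $\mathrm{III}_N/\tilde t_N\to0$ a.s.; the same class membership bounds $\E[|\mathrm{II}_N|^{2c}]\le K$, so $\PP(|\mathrm{II}_N|>\varepsilon\tilde t_N^{1/2})\le K\varepsilon^{-2c}\tilde t_N^{-c}$ is summable and $\mathrm{II}_N/\tilde t_N\to0$ a.s. For $\mathrm V_N$, the partial sums are nondecreasing in $N$ with total expectation $\sum_j\tau_{j-1}^2\,\E[|f(Y^{x,\tau}_{t_{j-1}})-\mu(f)|^2]\le K\sum_j\tau_{j-1}^2<\infty$ by condition (i), so $\sup_N\mathrm V_N<\infty$ a.s.\ and $\mathrm V_N/\tilde t_N\to0$ a.s. Finally, taking expectations in the decomposition and using $\E[\mathrm I_N]=\E[\mathrm{III}_N]=0$, the boundedness of $\E[\mathrm{II}_N]$ and $\E[\mathrm V_N]$, and \Cref{prop_ZkSquareMoment} ($\tilde t_N^{-1}\sum_{k=1}^N\E[|\tilde{\mathcal Z}^{x,\tau}_k|^2]\to v^2$), we get $\E[\mathrm{IV}_N]/\tilde t_N\to v^2$.

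It remains to show $(\mathrm{IV}_N-\E[\mathrm{IV}_N])/\tilde t_N\to0$ a.s. Writing $\Psi^\tau_j:=P^\tau_{t_{n_{(k-1)}},t_{j-1}}F^\tau_{j-1}$ for $j$ in block $k$, and recalling from the proof of \Cref{prop_ZkSquareMoment} that $F^\tau_{j-1}\in\mathcal C_{\tilde p_F,\gamma}$ with uniformly bounded norm while $t_{j-1}-t_{n_{(k-1)}}\le1+2\bar\tau$, one checks that $\Psi^\tau_j$ lies in a fixed class $\mathcal C_{\bar p,\gamma}$ with $\sup_j\|\Psi^\tau_j\|_{\bar p,\gamma}<\infty$, so that $\mathrm{IV}_N-\E[\mathrm{IV}_N]=2\sum_{k=1}^N\sum_{j}\tau_{j-1}\big(\Psi^\tau_j(Y^{x,\tau}_{t_{n_{(k-1)}}})-\E[\Psi^\tau_j(Y^{x,\tau}_{t_{n_{(k-1)}}})]\big)$ is a weighted sum of centered functionals of the nearly unit-spaced Markov chain $\{Y^{x,\tau}_{t_{n_{(k)}}}\}_k$. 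The crux is the moment estimate
\[
\E\Big[\,\Big|\sum_{k=1}^N\sum_{j=n_{(k-1)}+1}^{n_{(k)}}\tau_{j-1}\big(\Psi^\tau_j(Y^{x,\tau}_{t_{n_{(k-1)}}})-\E[\Psi^\tau_j(Y^{x,\tau}_{t_{n_{(k-1)}}})]\big)\Big|^{2c}\,\Big]\le K\Big(\sum_{j=1}^{n_{(N)}}\tau_{j-1}\Big)^{c}\le K\,\tilde t_N^{c},
\]
obtained by expanding the $2c$-th power, ordering the indices, and repeatedly conditioning so as to extract, for $m<k$, the factor $|P^\tau_{t_{n_{(m)}},t_{n_{(k-1)}}}\Psi^\tau_j(\cdot)-\mu(\Psi^\tau_j)|\le K\|\Psi^\tau_j\|_{\bar p,\gamma}(1+\|\cdot\|^{\frac{\bar p\tilde q}{2}+\tilde d\gamma})\max\{\tau_{n_{(k-1)}}^{\gamma\alpha},(\rho(t_{n_{(k-1)}}-t_{n_{(m)}}))^\gamma\}$ from~\eqref{equ_PtkApprmuFromtm}; since the $\tau$-weights carried by the sum mirror those in the step-size conditions, conditions (i)--(iii) and the $\sum_i\tau_i^{\gamma\alpha}\tau_{i+1}$ part of (iv), together with $2c(\tfrac{\tilde p_F\tilde q}{2}+\tilde d\gamma)\le q$ (which keeps the relevant $2c$-th moments of $\Psi^\tau_j$ and of $P^\tau\Psi^\tau_j$ finite), make every series in the expansion converge. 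Thus $\E[|\mathrm{IV}_N-\E[\mathrm{IV}_N]|^{2c}]\le K\tilde t_N^{-c}$, summable in $N$ for $c\ge2$, and Borel--Cantelli gives $\mathrm{IV}_N/\tilde t_N\to v^2$ a.s.; combining the five contributions finishes the proof.

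The main obstacle is the displayed moment estimate: in the time-homogeneous case one encodes $F^\tau$ through the solution of a single Poisson equation and reads the required decay off a spectral gap, whereas here one must work with the whole family $\{F^\tau_{j-1}\}$ and the two-parameter operators $P^\tau_{t_m,t_n}$, and the delicate part is the combinatorial bookkeeping that matches the local step sizes $\tau_{j-1}$ appearing in the sum against the decay furnished by $\rho$ and $\rho^\tau$ — this is precisely where conditions (i)--(iv) of \Cref{thm_numerLIL}, the moment inequalities of \Cref{prop_ZkIntegrable}, and the Wasserstein estimates \eqref{equ_PtkApprmuFromtm}--\eqref{equ_PtkxApprPtky} must be used in concert. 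A secondary nuisance is checking that $\mathscr A^{x,\tau}_n$, $F^\tau_{j-1}$ and $\Psi^\tau_j$ indeed belong to fixed classes $\mathcal C_{\bar p,\gamma}$ with uniform norm bounds, which requires tracking the inflation of growth exponents caused by composing the transition operators with functions in $\mathcal C_{p,\gamma}$.
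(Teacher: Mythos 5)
Your decomposition $\sum_{k\le N}|\tilde{\mathcal Z}^{x,\tau}_k|^2=\mathrm I_N+\mathrm{II}_N-\mathrm{III}_N+\mathrm{IV}_N-\mathrm V_N$ is algebraically correct (the block-wise telescoping of the $(\mathscr A^{x,\tau})^2$-increments under $\E[\,\cdot\,|\mathcal G_{k-1}]$ does work), and your treatment of $\mathrm I_N$, $\mathrm{II}_N$, $\mathrm{III}_N$, $\mathrm V_N$ via Rosenthal, the moment bounds of \Cref{prop_ZkIntegrable}/\Cref{rem_momentboundedness} (the hypothesis $4c(\frac{p\tilde q}{2}+\tilde d\gamma)\le q$ is indeed what makes the $4c$-th moments finite) and Borel--Cantelli is sound, as is the observation that $\E[\mathrm{IV}_N]/\tilde t_N\to v^2$ follows from \Cref{prop_ZkSquareMoment} by taking expectations in the decomposition. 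This is a genuinely different organization from the paper, which never isolates a compensator: it expands $\E\big[|\sum_{k\le N}(|\tilde{\mathcal Z}^{x,\tau}_k|^2-\tilde\tau_kv^2)|^{2c}\big]$ directly and runs a recursive H\"older argument in which the whole difficulty is concentrated in showing that $\sum_{k_{2c}>k_{2c-1}}\E\big[(|\tilde{\mathcal Z}^{x,\tau}_{k_{2c}}|^2-\tilde\tau_{k_{2c}}v^2)\,\big|\,\mathcal F_{t_{n_{(k_{2c-1})}}}\big]$ is bounded in $L^{2c}$ uniformly.

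The gap in your argument is precisely the step you compress into one sentence: the bound $\E\big[|\mathrm{IV}_N-\E[\mathrm{IV}_N]|^{2c}\big]\le K\tilde t_N^{\,c}$. As you set it up, the centered summands are $\tau_{j-1}\big(\Psi^\tau_j(Y^{x,\tau}_{\tilde t_{k-1}})-\E[\Psi^\tau_j(Y^{x,\tau}_{\tilde t_{k-1}})]\big)$ with $\sum_{j\in\text{block }k}\tau_{j-1}\approx 1$, so after "ordering the indices and conditioning" the decay factors you extract, $\max\{\tau_{n_{(k-1)}}^{\gamma\alpha},(\rho(\tilde t_{k-1}-\tilde t_m))^\gamma\}$, are carried with \emph{unit} block weights and must be summable over the coarse index by themselves. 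Conditions (\romannumeral 1)--(\romannumeral 4) of \Cref{thm_numerLIL} do not give this: they only control fine-grid sums in which each decay factor is paired with a small weight $\tau_i$ (e.g.\ $\sup_k\sum_{i\ge k}\tau_i(\rho(t_i-t_k))^\gamma<\infty$, $\sum_i\tau_i^{1+\gamma\alpha}<\infty$); neither $\sum_k\tau_{n_{(k)}}^{\gamma\alpha}<\infty$ nor $\sup_m\sum_{k>m}(\rho(\tilde t_{k-1}-\tilde t_m))^\gamma<\infty$ is assumed or follows ($\rho$ is not monotone, and $\sum_k(\rho(k))^{\gamma_1}<\infty$ concerns integer arguments and the exponent $\gamma_1$), so "every series in the expansion converges" is not justified — already the second-moment version of your estimate fails to follow from the stated hypotheses. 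The repair is to avoid passing to $\Psi^\tau_j$ and instead apply \eqref{equ_PtkApprmuFromtm} to $F^\tau_{j-1}$ over the interval $[\tilde t_{m},t_{j-1}]$, so that the decay factor stays attached to the weight $\tau_{j-1}$; then $\sum_{k'>k}\big|\E[W_{k'}\mid\mathcal G_{k-1}]-\E[W_{k'}]\big|$ is uniformly bounded by conditions (\romannumeral 1)--(\romannumeral 2) (this is exactly the role of the conditional $\mathrm I/\mathrm{II}/\mathrm{III}$ estimates in the paper's Step 1), and one still needs the full recursive bookkeeping (the paper's $\mathcal I^{x,\tau}_c(N)$ recursion, or an equivalent re-centering after each extraction) to convert that uniform conditional bound into the $2c$-th moment estimate of order $\tilde t_N^{\,c}$; your sketch supplies neither of these two ingredients, and they are the technical core of the result.
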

\begin{proof}
Recalling that $\{\tilde{\tau}_N\}_{N\in\N}$ is the time-step sequence corresponding to $\{\tilde{t}_N\}_{N\in\N}$ and that $n_{(k)}$ is the index satisfying $\tilde{t}_k=t_{n_{(k)}}$. 
	We first expand $	\E [ | \frac{1}{\tilde{t}_m} \sum^m_{k=1} | \tilde{\mathcal Z}^{x,\tau}_k|^2-v^2|^{2c}]$ for $1\leq m\leq N$ and $c\in\N^+$  and obtain
	\begin{align} \label{equ_Zalmostsure1}
	&\quad \E \Big[ \Big| \frac{1}{\tilde{t}_m} \sum^m_{k=1} \big| \tilde{\mathcal Z}^{x,\tau}_k\big|^2-v^2\Big|^{2c}\Big] \nn\\
	&=\frac{1}{(\tilde{t}_m)^{2c}}\E\Big[\sum^m_{k_{2c}=1}\cdots\sum^m_{k_{2}=1}\sum^m_{k_{1}=1} \Big(\big| \tilde{\mathcal Z}^{x,\tau}_{k_1}\big|^2-\tilde{\tau}_{k_1}v^2 \Big)\cdots \Big(\big| \tilde{\mathcal Z}^{x,\tau}_{k_{2c}}\big|^2-\tilde{\tau}_{k_{2c}}v^2 \Big)  \Big] \nn \\
	&=(2c)! \frac{1}{(\tilde{t}_m)^{2c}}\E\Big[\sum^m_{k_{2c}=1}\sum^{k_{2c}}_{k_{2c-1}=1}\Big( \big(\big|\tilde{\mathcal Z}^{x,\tau}_{k_{2c-1}}\big|^2-\tilde{\tau}_{k_{2c-1}}v^2\big) \E\big[\big(\big|\tilde{\mathcal Z}^{x,\tau}_{k_{2c}} \big|^2-\tilde{\tau}_{k_{2c}}v^2 \big) \big| \mathcal{F}_{t_{n_{(k_{2c-1})}}}\big]\Big) \nn \\
	&\quad \times \Big(\sum^{k_{2c-1}}_{k_{2c-2}=1} \cdots\sum^{k_2}_{k_{1}=1} \big(\big| \tilde{\mathcal Z}^{x,\tau}_{k_1}\big|^2-\tilde{\tau}_{k_1}v^2 \big)\cdots \big(\big| \tilde{\mathcal Z}^{x,\tau}_{k_{2c-2}}\big|^2-\tilde{\tau}_{k_{2c-2}}v^2 \big) \Big) \Big] \nn \\
	&=(2c)(2c-1)  \frac{1}{(\tilde{t}_m)^{2c}}\E\bigg[\sum^{m}_{k_{2c-1}=1} \Big[ \big(
	\big|\tilde{\mathcal Z}^{x,\tau}_{k_{2c-1}}\big|^2-\tilde{\tau}_{k_{2c-1}}v^2\big) \Big( \sum^m_{k_{2c}=k_{2c-1}} \E\big[\big(\big|\tilde{\mathcal Z}^{x,\tau}_{k_{2c}} \big|^2-\tilde{\tau}_{k_{2c}}v^2 \big) \big| \mathcal{F}_{t_{n_{(k_{2c-1})}}}\big]\Big) \nn \\
	&\quad \times \Big| \sum^{k_{2c-1}}_{k=1} \big(|\tilde{\mathcal Z}^{x,\tau}_k|^2-\tilde{\tau}_kv^2 \big)\Big|^{2(c-1)} \Big] \bigg],
	\end{align}
	where we use the properties that $1\leq k_1 \leq k_2 \leq \cdots \leq k_{2c}\leq m$ and the martingale difference $\tilde{\mathcal Z}^{x,\tau}_{k}$ is $\mathcal{F}_{t_{n_{(k)}}}$-measuable. 
	By using the H\"{o}lder inequality for \eqref{equ_Zalmostsure1}, we have 
	\begin{align} \label{equ_tildeZsplit}
	&\quad \E \Big[ \Big| \frac{1}{\tilde{t}_m} \sum^m_{k=1} \big| \tilde{\mathcal Z}^{x,\tau}_k\big|^2-v^2\Big|^{2c}\Big]\nn \\
	&\leq K \frac{1}{(\tilde{t}_m)^{2c}}   \sum^m_{k_{2c-1}=1}\bigg[ \Big(\E\Big[ \big|\big(\big|\tilde{\mathcal Z}^{x,\tau}_{k_{2c-1}}\big|^2-\tilde{\tau}_{k_{2c-1}}v^2\big) \big( \sum^m_{k_{2c}=k_{2c-1}}  \E\big[\big(\big|\tilde{\mathcal Z}^{x,\tau}_{k_{2c}} \big|^2-\tilde{\tau}_{k_{2c}}v^2 \big) \big|\mathcal{F}_{t_{n_{(k_{2c-1})}}} \big] \big)\big|^{c}   \Big] \Big)^{\frac 1c}\nn \\
	&\quad \times
	 \Big(\E\Big[\big| \sum^{k_{2c-1}}_{k=1} \big(|\tilde{\mathcal Z}^{x,\tau}_k|^2-\tilde{\tau}_kv^2 \big)\big|^{2c} \Big]\Big)^{\frac{c-1}{c}} \bigg].
	\end{align}
	Here, the constant $K$ only depends on the order $c$.
	Define
	$$\mathcal{I}^{x,\tau}_c(N):=\sup_{1\leq m\leq N}\E \big[ \big|\sum^{m}_{k=1} \big(|\tilde{\mathcal Z}^{x,\tau}_k|^2-\tilde{\tau}_kv^2 \big) \big|^{2c} \big].$$  By using  \eqref{equ_tildeZsplit}, we obtain
	\begin{align*}
	 &\quad \mathcal{I}^{x,\tau}_c(N) \\
	 &=\sup_{1\leq m\leq N}  \bigg[(\tilde{t}_m)^{2c} \ \E \Big[ \Big| \frac{1}{\tilde{t}_m} \sum^m_{k=1} \big| \tilde{\mathcal Z}^{x,\tau}_k\big|^2-v^2\Big|^{2c}\Big] \bigg] \\
	 &\leq K  \sum^N_{k_{2c-1}=1} \sup_{1\leq m\leq N} \bigg[ \Big(\E\Big[ \big|\big(\big|\tilde{\mathcal Z}^{x,\tau}_{k_{2c-1}}\big|^2-\tilde{\tau}_{k_{2c-1}}v^2\big) \big( \sum^m_{k_{2c}=k_{2c-1}}  \E\big[\big(\big|\tilde{\mathcal Z}^{x,\tau}_{k_{2c}} \big|^2-\tilde{\tau}_{k_{2c}}v^2 \big) \big| \mathcal{F}_{t_{n_{(k_{2c-1})}}}\big] \big)\big|^{c}   \Big] \Big)^{\frac 1c} \\
	 &\quad \times
		\Big(\mathcal{I}^{x,\tau}_c(N)\Big)^{\frac{c-1}{c}} \bigg],
	\end{align*}
	which leads to
	\begin{align*} 
	&\quad \E \Big[ \Big|\sum^{N}_{k=1} \big(|\tilde{\mathcal Z}^{x,\tau}_k|^2-\tilde{\tau}_kv^2 \big) \Big|^{2c} \Big] \leq 	\mathcal{I}^{x,\tau}_c(N) 
	\leq  K \Big[\sum^N_{k_{2c-1}=1} G^{x,\tau,c}_{k_{2c-1}} \Big]^{c}
	\end{align*}
	with 
	\begin{align*}
	G^{x,\tau,c}_{k_{2c-1}} &:= \sup_{1\leq m\leq N}\Big(\E\Big[ \big||\tilde{\mathcal Z}^{x,\tau}_{k_{2c-1}}|^2-\tilde{\tau}_{k_{2c-1}}v^2\big|^{2c} \Big] \\
	&\quad + \E\Big[ \big||\tilde{\mathcal Z}^{x,\tau}_{k_{2c-1}}|^2-\tilde{\tau}_{k_{2c-1}}v^2\big|^c  \big| \sum^m_{k_{2c}=k_{2c-1}+1}  \E\big[\big(|\tilde{\mathcal Z}^{x,\tau}_{k_{2c}} |^2-\tilde{\tau}_{k_{2c}}v^2 \big) \big| \mathcal{F}_{t_{n_{(k_{2c-1})}}}\big] \big|^{c}   \Big] \Big)^{\frac 1c}.
	\end{align*}
	Here we claim that the expression on the right-hand side grows as $(\tilde{t}_N)^c$, which suffices to show that $G^{x,\tau,c}_{k_{2c-1}}$ is uniformly bounded. 
 We divide the proof of this claim into three steps.
 
 {\textit{Step 1:  Estimate of   $\, \sum^m_{k_{2c}=k_{2c-1}+1}  \E\big[\big(\big|\tilde{\mathcal Z}^{x,\tau}_{k_{2c}} \big|^2-\tilde{\tau}_{k_{2c}}v^2 \big) \big| \mathcal{F}_{t_{n_{(k_{2c-1})}}}\big] $.}}
By applying \eqref{equ_GuExpression} together with the tower property of conditional expectation, for $k\geq n_{ (k_{2c-1})}+1$, we express $\E[| \mathcal Z^{x,\tau}_{k}|^2 | \mathcal{F}_{t_{n_{(k_{2c-1})}}}] $  as
		\begin{align} \label{equ_ZkExpressionCondition}
			&\quad \E \Big[	\big|\mathcal Z^{x,\tau}_{k} \big|^2 \Big|\mathcal{F}_{t_{n_{(k_{2c-1})}}} \Big] \nn\\
			&=   -\tau_{k-1}^2 \E \Big[ \big| f(Y_{t_{k-1}}^{x,\tau})-\mu(f) \big|^2 \Big|\mathcal{F}_{t_{n_{(k_{2c-1})}}}	\Big] +	\E \Big[ \Big| \sum^\infty_{i=k} \tau_i \big( \E \big[ f(Y_{t_{i}}^{x,\tau}) \vert \mathcal{F}_{t_k} \big] -\mu(f)\big)\Big|^2 \Big| \mathcal{F}_{t_{n_{(k_{2c-1})}}}	\Big]  \nn \\
			&\quad - 	\E \Big[ \Big| \sum^\infty_{i=k-1} \tau_i \big( \E \big[ f(Y_{t_{i}}^{x,\tau}) \big\vert \mathcal{F}_{t_{k-1}}  \big] -\mu(f)\big)\Big|^2\Big|\mathcal{F}_{t_{n_{(k_{2c-1})}}}	 \Big]\nn \\
			&\quad + 2\tau_{k-1} \E\Big[ \big( f(Y_{t_{k-1}}^{x,\tau})-\mu(f)\big)  \Big( \sum^\infty_{i=k-1} \tau_i \big( \E \big[ f(Y_{t_{i}}^{x,\tau}) \big\vert \mathcal{F}_{t_{k-1}}  \big] -\mu(f)\big) \Big) \Big|\mathcal{F}_{t_{n_{(k_{2c-1})}}}	\Big]. 
		\end{align}
	Using \eqref{equ_ZkExpressionCondition}, for each fixed $k_{2c-1}$, we rewrite the sum as
		\begin{align} \label{equ_splitofZconditional}
		&\quad 	\sum^m_{k_{2c}=k_{2c-1}+1}  \E\big[\big(\big|\tilde{\mathcal Z}^{x,\tau}_{k_{2c}} \big|^2-\tilde{\tau}_{k_{2c}}v^2 \big) \big| \mathcal{F}_{t_{n_{(k_{2c-1})}}} \big] \nn\\
		&= - \bigg(\sum^{n_{(m)}}_{k=n_{(k_{2c-1})}+1} \tau_{k-1}^2 \E \Big[ \big| f(Y_{t_{k-1}}^{x,\tau})-\mu(f) \big|^2 \Big|\mathcal{F}_{t_{n_{(k_{2c-1})}}}	\Big] \bigg) \nn\\
		&\quad+ \bigg(	\E \Big[ \Big| \sum^\infty_{i=n_{(m)}} \tau_i \big( \E \big[ f(Y_{t_{i}}^{x,\tau}) \vert \mathcal{F}_{t_{n_{(m)}}} \big] -\mu(f)\big)\Big|^2 \Big|\mathcal{F}_{t_{n_{(k_{2c-1})}}}	\Big] \nn\\
		&\quad - 	\E \Big[ \Big| \sum^\infty_{i=n_{(k_{2c-1})}} \tau_i \big( \E \big[ f(Y_{t_{i}}^{x,\tau}) \big\vert \mathcal{F}_{t_{n_{(k_{2c-1})}} } \big] -\mu(f)\big)\Big|^2\Big|\mathcal{F}_{t_{n_{(k_{2c-1})}}} \Big] \bigg) \nn\\
		&\quad +  \sum^{n_{(m)}}_{k=n_{(k_{2c-1})}+1}  \tau_{k-1}\bigg(2 \E\Big[ \big( f(Y_{t_{k-1}}^{x,\tau})-\mu(f)\big)  \Big( \sum^\infty_{i=k-1} \tau_i \big( \E \big[ f(Y_{t_{i}}^{x,\tau}) \big\vert \mathcal{F}_{t_{k-1}}  \big] -\mu(f)\big) \Big) \Big| \mathcal{F}_{t_{n_{(k_{2c-1})}}}	\Big] -v^2 \bigg) \nn\\
		&\quad +  (\tau_{n_{(k_{2c-1})}}- \tau_{n_{(m)}}  ) v^2 \nn\\
		&=: -\uppercase\expandafter{\romannumeral1}_{k_{2c-1},m }+ \uppercase\expandafter{\romannumeral2}_{k_{2c-1},m }+\uppercase\expandafter{\romannumeral3}_{k_{2c-1},m }+  (\tau_{n_{(k_{2c-1})}}- \tau_{n_{(m)}}  ) v^2.
		\end{align}
	
	{\bf{Estimate of $\uppercase\expandafter{\romannumeral1}_{k_{2c-1},m}.$}}
	It follows from \eqref{equ_apprftomu} that  
	\begin{align} \label{equ_I1m}
		|\uppercase\expandafter{\romannumeral1}_{k_{2c-1},m }| &\leq K\|f\|^2_{p,\gamma} \sum^{n_{(m)}}_{k=n_{(k_{2c-1})}+1} \tau_{k-1}^2  \big(1+\E \big[ \| Y_{t_{k-1}}^{x,\tau}\|^p \big|\mathcal{F}_{t_{n_{(k_{2c-1})}}}	\big] \big).
	\end{align}
	
		{\bf{Estimate of $\uppercase\expandafter{\romannumeral2}_{k_{2c-1},m}.$}}
	In the same manner as \eqref{equ_summationOfSquareFromtm}, we derive  
		\begin{align*}
		&\quad	\E \Big[ \Big| \sum^\infty_{i=n_{(m)}} \tau_i \big( P^\tau_{t_{n_{(m)}}, t_i}  f(Y_{t_{n_{(m)}}}^{x,\tau})  -\mu(f)\big)\Big|^2  \Big| \mathcal{F}_{t_{n_{(k_{2c-1})}}}\Big] \\
		&\leq   \E \Big[ \Big| \sum^\infty_{i=n_{(m)}} \big(\tau_i  K  \|f\|_{p,\gamma}  (1+\|Y_{t_{n_{(m)}}}^{x,\tau} \|^{\frac{p\tilde{q}}{2}+\tilde{d}\gamma})  \max\big\{ \tau_{i}^{\gamma \alpha}, (\rho(t_{i}-t_{n_(m)}))^\gamma \big\} \big) \Big|^2  \Big| \mathcal{F}_{t_{n_{(k_{2c-1})}}}\Big] \\
		&\leq K  \|f\|^2_{p,\gamma}  (1+ \E[\| Y_{t_{n_{(m)}}}^{x,\tau} \|^{p\tilde{q}+2\tilde{d}\gamma}  |\mathcal{F}_{t_{n_{(k_{2c-1})}}} ])
	\end{align*}
	and
	\begin{align*}
			&\quad	\E \Big[ \Big| \sum^\infty_{i=n_{(k_{2c-1})}} \tau_i \big( P^\tau_{t_{n_{(k_{2c-1})}}, t_i}  f(Y_{t_{n_{(k_{2c-1})}}}^{x,\tau})  -\mu(f)\big)\Big|^2  \Big| \mathcal{F}_{t_{n_{(k_{2c-1})}}}\Big] \\
		&\leq K  \|f\|^2_{p,\gamma}  \big(1+ \| Y_{t_{n_{(k_{2c-1})}}}^{x,\tau} \|^{p\tilde{q}+2\tilde{d}\gamma} \big).
	\end{align*}
	These two estimates yield
	\begin{align}  \label{equ_I2m}
		|\uppercase\expandafter{\romannumeral2}_{k_{2c-1},m }| \leq  K  \|f\|^2_{p,\gamma}  \big(1+ \| Y_{t_{n_{(k_{2c-1})}}}^{x,\tau} \|^{p\tilde{q}+2\tilde{d}\gamma}+ \E[\| Y_{t_{n_{(m)}}}^{x,\tau} \|^{p\tilde{q}+2\tilde{d}\gamma}  | \mathcal{F}_{t_{n_{(k_{2c-1})}}}] \big).
	\end{align}
	
{\bf{Estimate of $\uppercase\expandafter{\romannumeral3}_{k_{2c-1},m}.$}}
For $\uppercase\expandafter{\romannumeral3}_{k_{2c-1},m }$, we apply the same decomposition as in \eqref{equ_Mainpart3} for each $k\in[n_{(k_{2c-1})}+1, n_{(m)}]$ and obtain
\begin{align*}
		|\uppercase\expandafter{\romannumeral3}_{k_{2c-1},m }| 
	\leq  \sum^{n_{(m)}}_{k=n_{(k_{2c-1})}+1} [ \tau_{k-1}( \tilde{\I}_{1,k} +\I_{2,k})], 
\end{align*}
where
$$	\tilde{\I}_{1,k}
:=2 \Big| \E\big[ 	F^\tau_{k-1} (Y_{t_{k-1}}^{x,\tau} ) \big| \mathcal{F}_{t_{n_{(k_{2c-1})}}}\big] -\mu (F^\tau_{k-1}  )\Big| ,$$ with $F^\tau_{k-1}$  defined in \eqref{equ_definitionF}, and $\I_{2,k}$ as given in \eqref{equ_Mainpart3}.
Combining
	\begin{align*}
			\tilde{\I}_{1,k}
			&\leq  K \|f\|^2_{p,\gamma}  (1+\|Y_{t_{n_{(k_{2c-1})}}}^{x,\tau}\|^{\frac{\tilde{p}_F\tilde{q}}{2}+\tilde{d}\gamma})  \max\big\{ \tau_{k-1}^{\gamma \alpha}, (\rho(t_{k-1}-t_{n_{(k_{2c-1})}}))^\gamma \big\}
	\end{align*}
and \eqref{equ_Mainpart3_3}, we have 
	\begin{align}  \label{equ_I3m}
	|\uppercase\expandafter{\romannumeral3}_{k_{2c-1},m }| 
	\leq  K \|f\|^2_{p,\gamma}  (1+\|Y_{t_{n_{(k_{2c-1})}}}^{x,\tau}\|^{\frac{\tilde{p}_F\tilde{q}}{2}+\tilde{d}\gamma}).
	\end{align}
	
By plugging \eqref{equ_I1m}--\eqref{equ_I3m} into \eqref{equ_splitofZconditional}, we arrive at
	\begin{align*}
	&\quad 	\sum^m_{k_{2c}=k_{2c-1}+1}  \E\big[\big(\big|\tilde{\mathcal Z}^{x,\tau}_{k_{2c}} \big|^2-\tilde{\tau}_{k_{2c}}v^2 \big) \big|\mathcal{F}_{t_{n_{(k_{2c-1})}}} \big] \\
		&\leq K\|f\|^2_{p,\gamma} \sum^{n_{(m)}}_{k=n_{(k_{2c-1})}+1} \tau_{k-1}^2  \big(1+\E \big[ \| Y_{t_{k-1}}^{x,\tau}\|^p \big|\mathcal{F}_{t_{n_{(k_{2c-1})}}}	\big] \big) \\
		&\quad +K  \|f\|^2_{p,\gamma}  \big(1+ \| Y_{t_{n_{(k_{2c-1})}}}^{x,\tau} \|^{(p\tilde{q}+2\tilde{d}\gamma) \vee (\frac{\tilde{p}_F\tilde{q}}{2}+\tilde{d}\gamma) }+ \E\big[\| Y_{t_{n_{(m)}}}^{x,\tau} \|^{p\tilde{q}+2\tilde{d}\gamma}  | \mathcal{F}_{t_{n_{(k_{2c-1})}}} \big] \big),
	\end{align*}
	which leads to 
	\begin{align*}
\E \Big[\Big| \sum^m_{k_{2c}=k_{2c-1}+1}  \E\big[\big(\big|\tilde{\mathcal Z}^{x,\tau}_{k_{2c}} \big|^2-\tilde{\tau}_{k_{2c}}v^2 \big) \big| \mathcal{F}_{t_{n_{(k_{2c-1})}}} \big]  \Big|^{2c} \Big] 
		\leq K \|f\|^{4c}_{p,\gamma} (1+\|x\|^{2c\tilde{q}[(p\tilde{q}+2\tilde{d}\gamma) \vee (\frac{\tilde{p}_F\tilde{q}}{2}+\tilde{d}\gamma)] }).
	\end{align*}
	Here, we use  $2c[(p\tilde{q}+2\tilde{d}\gamma) \vee (\frac{\tilde{p}_F\tilde{q}}{2}+\tilde{d}\gamma)] \leq q$ and 
	\begin{align*}
	&\quad	\E \big[ \big|\sum^{n_{(m)}}_{k=n_{(k_{2c-1})}+1} \tau_{k-1}^2  \big(1+\E \big[ \| Y_{t_{k-1}}^{x,\tau}\|^p \big| \mathcal{F}_{t_{n_{(k_{2c-1})}}} \big] \big) \big|^{2c} \big]  \\
	&\leq \Big(\sum^{n_{(m)}}_{k=n_{(k_{2c-1})}+1} \tau^2_{k-1} \Big)^{2c} \sup_{k\geq 0} \E\big[  \| Y_{t_{k-1}}^{x,\tau}\|^{2cp} \big] 
	\leq K (1+\|x\|^{2cp\tilde{q}}).
	\end{align*}

{\textit{Step 2: Estimate of $ \E \Big[\big|	|\tilde{\mathcal Z}^{x,\tau}_{i}|^2-\tilde{\tau}_{i}v^2 \big|^{2c} \Big] $.}}
Applying  \eqref{equ_MomentboundednessTildeZ} with the order $4c$, we obtain
	\begin{align*}
		 \E \Big[\big|	|\tilde{\mathcal Z}^{x,\tau}_{i}|^2-\tilde{\tau}_{i}v^2 \big|^{2c} \Big] 
		&\leq \E \big[ 	|\tilde{\mathcal Z}^{x,\tau}_{i}|^{4c} \big] + \tilde{\tau}_{i}^{2c}v^{4c} 
		\leq K\|f\|^{4c}_{p,\gamma}  (1+\|x\|^{4c(\frac{p\tilde{q}^2}{2}+\tilde{q}\tilde{d}\gamma)})
	\end{align*}
		for $4c(\frac{p\tilde{q}}{2}+\tilde{d}\gamma) \leq q$. 
		
{\textit{Step 3: Uniformly boundedness of $G^{x,\tau,c}_{k_{2c-1}}$.}}
		By combining  the estimates obtained in \textit{Steps 1--2}, we deduce that for $1\leq i\leq j\leq N$ and $c\geq 1$,
		the following estimate holds:
		\begin{align*} 
&\quad G^{x,\tau,c}_{k_{2c-1}}  \\
	&\leq \bigg(	\E\Big[ \big||\tilde{\mathcal Z}^{x,\tau}_{k_{2c-1}}|^2-\tilde{\tau}_{k_{2c-1}}v^2\big|^{2c} \Big] \\
			&\quad + \sup_{1\leq m\leq N} \Big(	\E\Big[ \big||\tilde{\mathcal Z}^{x,\tau}_{k_{2c-1}}|^2-\tilde{\tau}_{k_{2c-1}}v^2\big|^{2c} \Big] \Big)^{\frac 12} \Big( \E \Big[ \Big| \sum^m_{k_{2c}=k_{2c-1}+1}  \E\big[\big(|\tilde{\mathcal Z}^{x,\tau}_{k_{2c}} |^2-\tilde{\tau}_{k_{2c}}v^2 \big) \big| \mathcal{F}_{t_{n_{(k_{2c-1})}}} \big] \Big|^{2c}   \Big] \Big)^{\frac 12} \bigg)^{\frac 1c}\\
			&\leq K \|f\|^{4}_{p,\gamma} (1+\|x\|^{4(\frac{p\tilde{q}^2}{2}+\tilde{q}\tilde{d}\gamma) \vee (2(\frac{p\tilde{q}^2}{2}+\tilde{q}\tilde{d}\gamma)+\tilde{q}(\frac{\tilde{p}_F\tilde{q}}{2}+\tilde{d}\gamma))}).
		\end{align*}
		
Combining \textit{Steps 1--3} proves the claim, based on which we have
	\begin{align*}
		\E \Big[ \Big| \frac{1}{\tilde{t}_N} \sum^{N}_{k=1} \big(|\tilde{\mathcal Z}^{x,\tau}_k|^2-v^2 \big) \Big|^{2c} \Big] &=\frac{1}{(\tilde{t}_N)^{2c}} \E \Big[ \Big|\sum^{N}_{k=1} \big(|\tilde{\mathcal  Z}^{x,\tau}_k|^2-\tilde{\tau}_kv^2 \big) \Big|^{2c} \Big] \leq \frac{1}{(\tilde{t}_N)^{2c}} \Big( \sum^N_{k_{2c-1}=1} G^{x,\tau,c}_{k_{2c-1}}\Big)^c  \\
		&\leq K \frac{N^{c}}{(\tilde{t}_N)^{2c}}\|f\|^{4c}_{p,\gamma} (1+\|x\|^{4c(\frac{p\tilde{q}^2}{2}+\tilde{q}\tilde{d}\gamma) \vee (2c(\frac{p\tilde{q}^2}{2}+\tilde{q}\tilde{d}\gamma)+c\tilde{q}(\frac{\tilde{p}_F\tilde{q}}{2}+\tilde{d}\gamma))}),
	\end{align*}
	and 
	for any $\varepsilon>0$,
	\begin{align*}
		&\quad \sum^\infty_{N=1} \mathbb{P} \Big\{ \Big| \frac{1}{\tilde{t}_N} \Big(\sum^{N}_{k=1} |\tilde{\mathcal Z}^{x,\tau}_k|^2\Big) -v^2  \Big|>\varepsilon \Big\}  \\
		&\leq  \frac{1}{\varepsilon^{2c}}\sum^\infty_{N=1} 	\E \Big[ \Big| \frac{1}{\tilde{t}_N} \sum^{N}_{k=1} \big(|\tilde{\mathcal Z}^{x,\tau}_k|^2-v^2 \big) \Big|^{2c} \Big] \\
		&\leq  K \frac{1}{\varepsilon^{2c}}\sum^\infty_{N=1}  \Big( \frac{N^{c}}{(\tilde{t}_N)^{2c}} \|f\|^{4c}_{p,\gamma} (1+\|x\|^{4c(\frac{p\tilde{q}^2}{2}+\tilde{q}\tilde{d}\gamma) \vee (2c(\frac{p\tilde{q}^2}{2}+\tilde{q}\tilde{d}\gamma)+c\tilde{q}(\frac{\tilde{p}_F\tilde{q}}{2}+\tilde{d}\gamma))}) \Big) \\
		&= K \frac{1}{\varepsilon^{2c}}\|f\|^{4c}_{p,\gamma} (1+\|x\|^{4c(\frac{p\tilde{q}^2}{2}+\tilde{q}\tilde{d}\gamma) \vee (2c(\frac{p\tilde{q}^2}{2}+\tilde{q}\tilde{d}\gamma)+c\tilde{q}(\frac{\tilde{p}_F\tilde{q}}{2}+\tilde{d}\gamma))}) \Big( \sum^\infty_{N=1}  \frac{N^{c}}{(\tilde{t}_N)^{2c}} \Big) <\infty.
	\end{align*}
	Here \eqref{equ_sumofTildeT} and  the fact that 
	\begin{align*}
		 \sum^\infty_{N=1}  \frac{N^c}{(\tilde{t}_N)^{2c}} = \sum^\infty_{N=1}  \frac{(N-\bar{\tau})^c}{(\tilde{t}_N)^{2c}} \frac{N^c}{(N-\bar{\tau})^c} \leq K \sum^\infty_{N=1}  \frac{1}{(\tilde{t}_N)^{c}} <\infty
	\end{align*}
	for any $c> 1$ are used.
	Therefore, the Borel--Cantelli lemma leads to 
$	\lim_{N\to \infty} \frac{1}{\tilde{t}_N} \big(\sum^{N}_{k=1} |\tilde{Z}^{x,\tau}_k|^2\big) =v^2$ almost surely, which completes the proof.
\end{proof}
 \section{Proofs of Propositions in \Cref{sec_numLIL}} \label{sec_ProofofChap3}
Based on the estimates in  \Cref{sec_martingaleProperties}, we now prove Propositions~\ref{prop_LILforMartingale}--\ref{prop_RemainderLimit2}.
For \Cref{prop_LILforMartingale}, we apply the LIL criterion for martingales given in \cite[Theorem 1]{Heyde1973},  which is stated as follows.
\begin{lemma} \label{lemma_Heyde1973}
	Assume that $\{M_n\}_{n\in\N}$ is a square integrable martingale, $\{Z_n\}_{n\in\N}$ 
	is the martingale difference sequence associated with $\{M_n\}_{n\in\N}$, and $S_n := \big(\mathbb{E}[(M_n)^2] \big)^{\frac 12} \to \infty $ as $n \to \infty$. If 
	\begin{equation} \label{equ_LILCondition1}
		\sum_{n=1}^{\infty} S_n^{-4} \mathbb{E}(|Z_n|^4 \mathbf{1}_{\{|Z_n|< \delta S_n\}}) < \infty, \qquad 	\text{for some}\  \delta>0,
	\end{equation} 
	\begin{equation}\label{equ_LILCondition1_1}
		\sum_{n=1}^{\infty} S_n^{-1} \mathbb{E}(|Z_n| \mathbf{1}_{\{|Z_n| \geq \varepsilon S_n\}}) < \infty, \qquad 	\text{for all}\  \varepsilon>0,
	\end{equation}
	and 
	\begin{equation}  \label{equ_LILCondition2}
		\lim_{n \to \infty} S_n^{-2} \sum_{k=1}^{n} Z_k^2 = 1, \qquad \text{a.s}.
	\end{equation}
	Then the sequence $\{\Lambda_n(\cdot)\}_{n > g(e)}$ of random variables on $\mathcal{C}([0,1]; \mathbb{R})$ defined by
	\[
	\Lambda_n(t) = \sum_{k=0}^{n-1} \mathbf{1}_{\{S_k^2 \leq t S_n^2 \leq S_{k+1}^2\}} \frac{M_k + (S_n^2 t - S_k^2)(S_{k+1}^2 - S_k^2)^{-1} Z_{k+1}}{\sqrt{2 S_n^2 \log \log S_n^2}}, \quad t \in [0,1]
	\]
 with $g(t):=\sup_{n\in\N^+} \{n: S_n^2 \leq t\}$ 
is almost surely relatively compact, and the set of its limit points coincides with $\mathcal{H}$. Here,
	$$
	\mathcal{H} := \Big\{  h\in \mathcal{C}([0,1]; \mathbb{R}) :h(0)=0,  \ h_t' \text{\ exists a.e. } t\in[0,1], \int^1_0 |h'_t|^2 \rd t \leq 1\Big\}.
	$$
\end{lemma}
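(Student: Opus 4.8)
The statement is exactly Heyde's functional law of the iterated logarithm for square-integrable martingales, so the most economical route is to invoke \cite[Theorem 1]{Heyde1973} directly. For the sake of a self-contained argument, the plan below records the strategy that underlies that result; I would follow it if I had to reprove the lemma from scratch.

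The plan is to reduce the martingale case to the Brownian case via a Skorokhod–Strassen embedding. First, after enlarging the probability space if necessary, one constructs a standard Brownian motion $\{B_t\}_{t\ge0}$ and a nondecreasing sequence of stopping times $\{T_n\}_{n\in\N}$ with $M_n=B_{T_n}$ for all $n$, where the conditional increments satisfy $\E[T_n-T_{n-1}\mid\mathcal F_{n-1}]=\E[Z_n^2\mid\mathcal F_{n-1}]$ together with higher-moment bounds controlled by the corresponding moments of $Z_n$; the truncation/moment hypotheses \eqref{equ_LILCondition1} and \eqref{equ_LILCondition1_1} are precisely the Lindeberg– and Kolmogorov–type conditions that keep the large jumps of $\{Z_n\}$ negligible at the scale $S_n^2$. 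Second, I would show $T_n/S_n^2\to1$ almost surely: the predictable bracket $\sum_{k\le n}Z_k^2$ differs from $T_n$ only by a martingale remainder that is controlled, via Borel–Cantelli, by \eqref{equ_LILCondition1}–\eqref{equ_LILCondition1_1}, while \eqref{equ_LILCondition2} gives $S_n^{-2}\sum_{k\le n}Z_k^2\to1$ a.s.; combining the two yields $T_n\sim S_n^2$ almost surely.

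Third, I would apply Strassen's functional LIL for Brownian motion, which states that the rescaled paths $t\mapsto B_{S_n^2 t}/\sqrt{2S_n^2\log\log S_n^2}$, $t\in[0,1]$, are almost surely relatively compact in $\mathcal C([0,1];\RR)$ with set of limit points exactly $\mathcal H$. Finally, one transfers this to the interpolated process $\{\Lambda_n\}$: using $T_n\sim S_n^2$ and the uniform modulus of continuity of Brownian motion, together with the jump control coming again from \eqref{equ_LILCondition1}–\eqref{equ_LILCondition1_1}, one proves $\sup_{t\in[0,1]}\big|\Lambda_n(t)-B_{S_n^2 t}/\sqrt{2S_n^2\log\log S_n^2}\big|\to0$ almost surely, so that $\{\Lambda_n\}$ inherits the relative compactness and the limit set $\mathcal H$. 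The main obstacle is the last two steps: quantifying the discrepancy between the martingale clock $\sum_{k\le n}Z_k^2$ (equivalently $T_n$) and the deterministic clock $S_n^2$, and then propagating that control through the supremum norm at the delicate normalization $\sqrt{2S_n^2\log\log S_n^2}$, which is where all four hypotheses on $\{Z_n\}$ are used in an essential way.
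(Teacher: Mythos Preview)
Your proposal is correct and matches the paper exactly: the paper does not prove this lemma at all but simply cites it as \cite[Theorem~1]{Heyde1973}, which is precisely what you do in your first sentence. Your additional sketch of the Skorokhod--Strassen embedding argument is a reasonable outline of Heyde's original proof, but it goes beyond what the paper itself provides.
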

The proof of \Cref{prop_LILforMartingale} is given below.
\begin{proof}[Proof of \Cref{prop_LILforMartingale}]
We first verify that $\{\tilde{\mathscr{M}}^{x,\tau}_{N}\}_{N\in\N}$ satisfies the conditions of Lemma \ref{lemma_Heyde1973}.
From \eqref{equ_orthnormalZ}, it follows that  $
	\E [\tilde{\mathcal{Z}}^{x,\tau}_{k} \tilde{\mathcal{Z}}^{x,\tau}_{j} ] = 0
	$   for $k\neq j$.  Together with
	 Proposition \ref{prop_ZkSquareMoment}, this yields
	\begin{align} \label{equ_L2limitofMn}
		\lim_{N \to \infty} \frac{1}{\tilde{t}_{N}} (\tilde{S}^{x,\tau}_{N} )^2 &=  \lim_{N \to \infty} \frac{1}{\tilde{t}_{N}} \mathbb{E}[( \tilde{\mathscr{M}}^{x,\tau}_{N})^2] =  \lim_{N \to \infty}  \frac{1}{\tilde{t}_{N}} \sum^{N}_{k=1} \E [| \tilde{ \mathcal{Z}}^{x,\tau}_k |^2] =v^2,
	\end{align}
	where we define $\tilde{S}^{x,\tau}_{N}:= ( \E[(\tilde{\mathscr{M}}^{x,\tau}_{N} )^2])^{\frac 12}$. 
	Since $\tilde{t}_{N} \to \infty$ as ${N} \to \infty$, \eqref{equ_L2limitofMn} implies that  $\tilde{S}^{x,\tau}_{N} \to \infty$.
	Moreover, by applying  \eqref{equ_L2limitofMn} and using \Cref{prop_almostsureZk} with  $c=2$, 
		we obtain
		\begin{align*}
			\lim_{N \to \infty}  (\tilde{S}^{x,\tau}_{N})^{-2}    \sum^N_{k=1} |\tilde{\mathcal{Z}}^{x,\tau}_k |^2 &=  	\lim_{N \to \infty}  \Big( \frac{1}{\tilde{t}_N} (\tilde{S}^{x,\tau}_{N} )^{2}  \Big)^{-1} \Big( \frac{1}{\tilde{t}_N}  \sum^N_{k=1} |\tilde{\mathcal{Z}}^{x,\tau}_k |^2 \Big) =1, \quad \text{a.s.}
		\end{align*}
		This shows that $\{\tilde{\mathscr{M}}^{x,\tau}_{N} \}_{N\in\N}$ satisfies  condition \eqref{equ_LILCondition2}.
		Combining  \Cref{prop_ZkSquareMoment} with \eqref{equ_L2limitofMn}, we next show  that  \eqref{equ_LILCondition1} holds for $\delta=1$ and that  \eqref{equ_LILCondition1_1} is satisfied. 
	Applying  \eqref{equ_MomentboundednessTildeZ}  with  $c=4$ yields $  \E [ |\tilde{\mathcal{Z}}^{x,\tau}_N |^4]= \E [| \sum^{n_{(N)}}_{n_{(N-1)}+1}Z^{x,\tau}_N |^4 ]<\infty$, establishing the required moment boundedness. 
		Then we derive
		\begin{align*}
			\sum^\infty_{N=1}   (\tilde{S}^{x,\tau}_{N} )^{-4}  \E \Big[ |\tilde{\mathcal{Z}}^{x,\tau}_N |^4  \mathbf{1}_{\{|\tilde{\mathcal{Z}}^{x,\tau}_N| < \tilde{S}^{x,\tau}_{N}\}}\Big] &\leq  	\sum^\infty_{N=1}   (\tilde{S}^{x,\tau}_{N})^{-4}  \E \Big[ |\tilde{\mathcal{Z}}^{x,\tau}_{N} |^4  \Big] \\
			&\leq K+	K \sum^\infty_{N=\lceil \bar{\tau} \rceil}   (\tilde{t}_N)^{-2}  \E \Big[ |\tilde{\mathcal{Z}}^{x,\tau}_{N} |^4  \Big] < \infty,
		\end{align*}
		which verifies \eqref{equ_LILCondition1}.
		Using 
		$\sum^{\infty}_{N=1} (\tilde{t}_N)^{-\frac 32}<\infty$ and the Chebyshev inequality,  for any $\varepsilon>0$, we obtain
		\begin{align*}
			&\quad \sum_{N=1}^{\infty} (\tilde{S}^{x,\tau}_{N} )^{-1} \mathbb{E}\Big[|\tilde{\mathcal{Z}}^{x,\tau}_{N}| \mathbf{1}_{\{|\tilde{\mathcal{Z}}^{x,\tau}_N| \geq \varepsilon \tilde{S}^{x,\tau}_{N} \}}\Big] \\
			&\leq  	\sum_{N=1}^{\infty} (\tilde{S}^{x,\tau}_{N})^{-1}  \Big( \mathbb{E}\Big[ |\tilde{\mathcal{Z}}^{x,\tau}_N|^2 \Big]\E\Big[| \mathbf{1}_{\{|\tilde{\mathcal{Z}}^{x,\tau}_N| \geq \varepsilon \tilde{S}^{x,\tau}_{N} \}}|^2 \Big] \Big)^{\frac 12} \\
			&\leq  \sum_{N=1}^{\infty} \frac{1}{\varepsilon^2} (\tilde{S}^{x,\tau}_{N} )^{-1}  \Big( \mathbb{E}\Big[ |\tilde{\mathcal{Z}}^{x,\tau}_{N}|^4 \Big] \Big)^{\frac 14}  \Big( (\tilde{S}^{x,\tau}_{N} )^{-4}  \E[ |\tilde{\mathcal{Z}}^{x,\tau}_{N}|^4 ] \Big)^{\frac 12} \\
			&\leq   \sum_{N=1}^{\infty} \frac{1}{\varepsilon^2} (\tilde{S}^{x,\tau}_{N} )^{-3}     \Big(   \E \big[ |\tilde{\mathcal{Z}}^{x,\tau}_{N}|^4 \big] \Big)^{\frac 34} \leq K+K   \sum_{N=\lceil \bar{\tau} \rceil}^{\infty} (\tilde{t}_N)^{-\frac 32}  \Big(     \E[ |\tilde{\mathcal{Z}}^{x,\tau}_N|^4 ]  \Big)^{\frac 34} <\infty,
		\end{align*}
		which verifies \eqref{equ_LILCondition1_1}. Therefore, we use Lemma \ref{lemma_Heyde1973} and  deduce that for $t\in[0,1]$,
		\begin{equation} \label{equ_LambdatExpression}
			\begin{aligned}
				\Lambda^{x,\tau}_N(t) &= \sum_{k=0}^{N-1}\Bigg( \mathbf{1}_{\{(\tilde{S}^{x,\tau}_{k})^2 \leq t (\tilde{S}^{x,\tau}_{N})^2 \leq (\tilde{S}^{x,\tau}_{k+1})^2\}} \\
				&\qquad\quad  \cdot \frac{\tilde{\mathscr{M}}^{x,\tau}_{k} + ((\tilde{S}^{x,\tau}_{N})^2 t - (\tilde{S}^{x,\tau}_{k})^2)((\tilde{S}^{x,\tau}_{k+1})^2 - (\tilde{S}^{x,\tau}_{k})^2)^{-1} \tilde{\mathcal{Z}}^{x,\tau}_{k+1}}{\sqrt{2 (\tilde{S}^{x,\tau}_{N})^2 \log \log( \tilde{S}^{x,\tau}_{N})^2}} \Bigg),
			\end{aligned} 
		\end{equation}
		 is almost surely relatively compact,  and the set of its limit points coincides with $\mathcal{H}$. 	 Since 
		$$\| h\|_{\mathcal{H}} =\sup_{t\in[0,1]} |h(t)| \leq\sup_{t\in[0,1]} \Big\{\sqrt{t} \Big(\int^t_0 |h'(s)|^2 \rd s\Big)^{\frac 12} \Big\}\leq \sup_{t\in[0,1]} \sqrt{t}  \leq 1$$ for any $h\in\mathcal{H}$, it holds that
		$$
	 \sup_{t\in[0,1]} \limsup_{N\to \infty}  \Lambda^{x,\tau}_N(t)  \leq 1, \quad \text{a.s.}
		$$
From \eqref{equ_LambdatExpression}, it follows that
		\begin{align*}
			\Lambda^{x,\tau}_N(1)&= \frac{\tilde{\mathscr{M}}^{x,\tau}_{N-1} +  \tilde{\mathcal{Z}}^{x,\tau}_{N}}{\sqrt{2 (\tilde{S}^{x,\tau}_{N})^2 \log \log( \tilde{S}^{x,\tau}_{N})^2}} 
			=   \frac{\tilde{\mathscr{M}}^{x,\tau}_{N} }{\sqrt{2 (\tilde{S}^{x,\tau}_{N})^2 \log \log( \tilde{S}^{x,\tau}_{N})^2}},
		\end{align*}
		which implies
		\begin{equation} \label{equ_limsupOfMn}
			\begin{aligned}
				\limsup_{N\to \infty} \frac{\tilde{\mathscr{M}}^{x,\tau}_{N} }{\sqrt{2 \tilde{t}_N \log \log \tilde{t}_N}} 
				&=   	 \limsup_{N\to \infty}  \frac{\tilde{\mathscr{M}}^{x,\tau}_{N} }{\sqrt{2 (\tilde{S}^{x,\tau}_{N})^2 \log \log( \tilde{S}^{x,\tau}_{N})^2}} \frac{\sqrt{2 (\tilde{S}^{x,\tau}_{N})^2 \log \log( \tilde{S}^{x,\tau}_{N})^2} }{\sqrt{2 \tilde{t}_N \log \log \tilde{t}_N }} \\
				&= v  ( \limsup_{N\to \infty} \Lambda^{x,\tau}_N(1) )\leq v, \quad \text{a.s.}
			\end{aligned}
		\end{equation}
		On the other hand, since the set of limit points of $\{ 	\Lambda^{x,\tau}_N(\cdot)\}_{N\in\N}$ in  $\mathcal{C}([0,1];\mathbb{R})$ coincides with $\mathcal{H}$, 
		there exists a subsequence $\{N_k\}_{k\in\N}$ such that $N_k \to \infty$ as $k\to\infty$ and 
		$$
		\lim_{k \to \infty} \sup_{t\in [0,1]} |  	\Lambda^{x,\tau}_{N_k}(t) -t|=0, \quad \text{a.s.},
		$$
		where we use the fact that $\tilde{h}(t):=t$ for  $t\in[0,1]$ belongs to $\mathcal{H}$. 
		This and \eqref{equ_limsupOfMn} yield
		$$
		\limsup_{N\to \infty} \frac{ \tilde{\mathscr{M}}^{x,\tau}_{N} }{\sqrt{2  \tilde{t}_N\log \log  \tilde{t}_N }} =v, \quad \text{a.s.},
		$$
		which provides the $\limsup$ result in \Cref{prop_LILforMartingale}.
		Replacing $f$ by $-f$, this formula turns into
		$$
		\liminf_{N\to \infty} \frac{ \tilde{\mathscr{M}}^{x,\tau}_{N} }{\sqrt{2 \tilde{t}_N  \log \log \tilde{t}_N  }} =-v, \quad \text{a.s.},
		$$
		which provides the $\liminf$ result  in \Cref{prop_LILforMartingale}.
	\end{proof}
	
	\begin{proof}[Proof of Proposition \ref{prop_RemainTermLimit}]
		Using  \eqref{equ_PtkApprmuFromtm} and \eqref{equ_PtkApprmu}, we have 
			\begin{align*}
				| \mathscr R^{x,\tau}_{k} | &\leq \sum_{i=k+1}^{\infty}\tau_i \Big| P^\tau_{t_k,t_i} f(Y_{t_k}^{x,\tau})-\mu(f)\Big| + \sum_{i=0}^{\infty}\tau_i\Big|P_{t_i}^{\tau} f(x)-\mu(f)\Big| \\
				&\leq K\|f\|_{p,\gamma} \Big(1+ \|Y_{t_k}^{x,\tau}\|^{\frac{p\tilde{q}}{2}+\tilde{d}\gamma} \Big)\sum_{i=k+1}^{\infty}\big(\tau_i  \max\big\{ \tau_i^{\gamma \alpha}, (\rho(t_i-t_k))^\gamma \big\} \big) \\
				&\quad + K\|f\|_{p,\gamma} \Big(1+ \|x\|^{\frac{p\tilde{q}}{2}+\tilde{d}\gamma} \Big)\sum_{i=0}^{\infty}\big(\tau_i  \max\big\{ \tau_i^{\gamma \alpha}, (\rho(t_i))^\gamma \big\} \big) \\
				&\leq  K\|f\|_{p,\gamma} \Big(1+ \|Y_{t_k}^{x,\tau}\|^{\frac{p\tilde{q}}{2}+\tilde{d}\gamma}+ \|x\|^{\frac{p\tilde{q}}{2}+\tilde{d}\gamma} \Big),
			\end{align*}
which combining with $c( \frac{p\tilde{q}}{2}+\tilde{d}\gamma) \leq q$ yields 
		\begin{align*}
			\sup_{k\in\N}	\E [ 	| \mathscr R^{x,\tau}_{k} |^c ] \leq   K\|f\|^c_{p,\gamma}  \big(1+\|x\|^{c(\frac{p\tilde{q}^2}{2}+\tilde{q}\tilde{d}\gamma)} \big).
		\end{align*}
	This implies the moment boundedness of $ \tilde{\mathscr R}^{x,\tau}_{N}$, since $ \tilde{\mathscr R}^{x,\tau}_{N}= \mathscr R^{x,\tau}_{n_{(N)}}$. 
		Using \eqref{equ_sumofTildeT}, we know that  $\sum^\infty_{N=1} (\tilde{t}_N)^{-\frac c2}$ converges with any $c>2$. Hence for any $\varepsilon>0$,
		\begin{align*}
			\sum^{\infty}_{N=1} \mathbb{P} \{  \frac{1}{(\tilde{t}_N)^{\frac 12}}| \tilde{\mathscr R}^{x,\tau}_{N}| > \varepsilon \} &\leq \frac{1}{\varepsilon^c} 	\sum^{\infty}_{N=1}  \frac{1}{(\tilde{t}_N)^{\frac c2}}\E [ 	| \tilde{\mathscr R}^{x,\tau}_{N} |^c ] \\
			&\leq \frac{K}{\varepsilon^c} \|f\|^c_{p,\gamma}  \big(1+\|x\|^{c(\frac{p\tilde{q}^2}{2}+\tilde{q}\tilde{d}\gamma)} \big)  	\sum^{\infty}_{N=1}(\tilde{t}_N)^{-\frac c2} <\infty.
		\end{align*}
		By utilizing the Borel--Cantelli Lemma, we prove that $\lim_{N\to \infty}	(\tilde{t}_N)^{-\frac 12}	\tilde{\mathscr R}^{x,\tau}_{N} =0$ almost surely, which ends the proof.
	\end{proof}
	 \begin{proof}[Proof of \Cref{prop_RemainderLimit2}]
		Let $\bar{R}^{x,\tau}_{\tilde{k}}:= \sum^{n_{(\tilde{k}+1)}}_{i=n_{(\tilde{k})}+1} \tau_i | f(Y^{x,\tau}_{t_i}) - \mu(f)|$. Combining this with \eqref{equ_defR2}, we arrive at
		\begin{align*}
			| R^{x,\tau}_{k} | 
			\leq \sum^{k}_{i=n_{(\tilde{k})}+1}\tau_i \big| f(Y^{x,\tau}_{t_i}) - \mu(f) \big| 
			\leq \bar{R}^{x,\tau}_{\tilde{k}}. 
		\end{align*}
		Using $\frac{cp}{2}\leq q$, we have
		\begin{align*}
			\E \big[ \big|\bar{R}^{x,\tau}_{\tilde{k}}\big|^c \big]  &= \E \Big[ \Big(  \sum^{n_{(\tilde{k}+1)}}_{i=n_{(\tilde{k})}+1}\tau_i \big| f(Y^{x,\tau}_{t_i}) - \mu(f) \big|\Big)^c \Big] \\
			&\leq  \big(  \sum^{n_{(\tilde{k}+1)}}_{i=n_{(\tilde{k})}+1}\tau_i \big)^{c-1} \Big(K\|f\|_{p,\gamma}^c  \sum^{n_{(\tilde{k}+1)}}_{i=n_{(\tilde{k})}+1}\tau_i  \Big(1+\E \big[ \| Y^{x,\tau}_{t_i}\|^{\frac{cp}{2}} \big]\Big)\Big)  \\
			&\leq  K\|f\|_{p,\gamma}^c \big(1+\|x\|^{\frac{cp\tilde{q}}{2}} \big) \big(  \sum^{n_{(\tilde{k}+1)}}_{i=n_{(\tilde{k})}+1}\tau_i \big)^{c}  
			\leq K \|f\|_{p,\gamma}^c  (1+2\bar{\tau})^c  (1+\|x\|^{\frac{cp\tilde{q}}{2}} ),
		\end{align*}
	which implies that  for any $\varepsilon>0$, 
		\begin{align*}
			\mathbb{P} \Big\{  \frac{1 }{(t_{n_{(\tilde{k})}})^{\frac 12}}  \big| \bar{R}^{x,\tau}_{\tilde{k}} \big|> \varepsilon \Big\} &\leq \frac{1}{\varepsilon^c} \frac{1}{ (t_{n_{(\tilde{k})}})^{\frac c2}} \E \big[ \big|\bar{R}^{x,\tau}_{\tilde{k}} \big|^c \big] 
			\leq   \frac{1}{\varepsilon^c} \frac{1}{ (t_{n_{(\tilde{k})}})^{\frac c2}} K \|f\|_{p,\gamma}^c   (1+\|x\|^{\frac{cp\tilde{q}}{2} }).
		\end{align*}
	Using \eqref{equ_sumofTildeT}, we obtain 
		\begin{align*}
			\sum^\infty_{\tilde{k}=1} 	\mathbb{P} \Big\{  \frac{1 }{(t_{n_{(\tilde{k})}})^{\frac 12}}  \big| \bar{R}^{x,\tau}_{\tilde{k}}  \big|> \varepsilon \Big\} \leq  	\Big(\sum^\infty_{\tilde{k}=1} 	\frac{1}{ (t_{n_{(\tilde{k})}})^{\frac c2}} \Big)   \frac{1}{\varepsilon^c}K \|f\|_{p,\gamma}^c  (1+\|x\|^{\frac{cp\tilde{q}}{2} }) <\infty,
		\end{align*}
		which together with the Borel--Cantelli lemma yields 
		\begin{align}\label{equ_limitR=0}
			\lim_{\tilde{k}\to \infty}  \frac{1}{(t_{n_{(\tilde{k})}})^{\frac 12}}  \big| \bar{R}^{x,\tau}_{\tilde{k}} \big|  =0, \quad \text{a.s.}
		\end{align}
		Combining \eqref{equ_limitR=0} with the fact that $k\in [n_{(\tilde{k})}, n_{(\tilde{k}+1)})$, 
		we have
		\begin{align*}
			\lim_{k\to \infty}  \frac{1}{(t_{k})^{\frac 12}}  \big| R^{x,\tau}_{k} \big| \leq   	\lim_{\tilde{k}\to \infty}  \frac{1}{(t_{n_{(\tilde{k})}})^{\frac 12} }  \big| \bar{R}^{x,\tau}_{\tilde{k}} \big| \Big(\frac{ t_{n_{(\tilde{k})}}} {t_{n_{(\tilde{k+1})}} } \Big)^{\frac 12} =0.
		\end{align*}
		Here we use  $	\lim_{\tilde{k}\to \infty} t_{n_{(\tilde{k})}}/t_{n_{(\tilde{k}+1)}}= 1$ as in the proof of \Cref{thm_numerLIL}. 
	 	This completes the proof.
	\end{proof}
\section{Examples} \label{sec_examples}
Finally, we apply our theoretical results to specific SODE and SPDE,  illustrating the applicability  of our results. 
\subsection{SODE} \label{subsection_SODE}
In this subsection, we consider the case of  SODE  as a concrete example of our general result.
To be specific, we consider the SODE in the following form
\begin{align}\label{ex_sode}
	\mathrm d X_t=b(X_t)\mathrm dt+\sigma(X_t) \mathrm dW(t),\quad X_0=x \in \RR^d,
\end{align} 
where $\{W(t),t\ge 0\}$ is an $m$-dimensional standard Brownian motion on a filtered probability space $(\Omega,\mathcal F,\{\mathcal F_t\}_{t\ge 0},\mathbb P)$.  Here, $b: \RR^d \rightarrow \RR^d$ and  $\sigma:\RR^d \rightarrow  \RR^{d\times m}$ satisfy the following assumptions.
 
\begin{assp}\label{ode_a1}
	There exist positive constants $c_1, c_2$ such that
	\begin{align*}
		\|\sigma(u_1)-\sigma(u_2)\|_{HS}\le c_1|u_1-u_2|, \quad &\forall~u_1,u_2\in \RR^d,\\
		\|\sigma(u)\|_{HS}\le c_2, \quad &\forall~u\in\RR^d.
	\end{align*}
\end{assp}

\begin{assp}\label{ode_a2}
	There exist $c_3>\frac{15}{2} c_1^2, \ c_4>0$, and $\bar{q}\ge 1$ such that
	\begin{align*}
	\langle u_1-u_2,b(u_1)-b(u_2)\rangle \le -c_3|u_1-u_2|^2, \quad\forall~u_1,u_2\in\RR^d,\\
		|b(u_1)-b(u_2)|\le c_4(1+|u_1|^{\bar{q}-1}+|u_2|^{\bar{q}-1})|u_1-u_2|, \quad\forall~u_1,u_2\in\RR^d.
	\end{align*}
\end{assp} 
As mentioned in \cite{Jin2025}, Assumptions \ref{ode_a1}--\ref{ode_a2} lead to 
\begin{align*}
 2		\langle u_1-u_2,b(u_1)-b(u_2)\rangle + 15	\|\sigma(u_1)-\sigma(u_2)\|_{HS}^2 \le -c_5|u_1-u_2|^2, \quad &\forall~u_1, u_2\in\RR^d, \\
 2 \langle u,b(u)\rangle +  c_6 \| \sigma(u) \|^2_{HS} \leq -c_3|u|^2 + c_7, \quad &\forall~u\in\RR^d,
\end{align*}
for any $c_6>0$. Here, $c_5 :=2c_3-15c_1^2$,  $c_7 := \frac{1}{c_3} |b(0)|^2 +c_6c_2^2$, and
$\|A\|_{HS} := \sqrt{\langle A, A \rangle_{HS}}$ with $\langle A, B \rangle_{HS} :=\sum^d_{i=1} \sum^m_{j=1} A_{ij}B_{ij}$ for matrices $A=(A_{ij})_{d \times m} \in \RR^{d \times m}$ and $B=(B_{ij})_{d \times m} \in \RR^{d \times m}$.
We additionally assume that $c_5 \leq 13$.

Using  results in \cite[Proposition 2.2]{Jin2025}, we find that
Assumption \ref{a1} (\romannumeral1)--(\romannumeral 3)  are satisfied with $\tilde r=1$,  $r\ge 2, \beta=0$,  $\rho(t)=e^{-\frac{c_5}{2}t}$,  $\gamma_1\in (0,1]$, $l=\bar{q}$, and $\tilde{l}=\frac 12$. Then using Proposition \ref{thm_exactLIL},  we prove that the exact solution $\{X^x_t\}_{t\ge 0}$  admits a unique invariant measure $\mu$ and obeys the LIL for $f \in \mathcal{C}_{p,\gamma}$ with any $p\geq1$ and $\gamma\in(0,1]$. To be specific,
\begin{align*}
	\limsup_{t\to \infty} \frac{ \int^t_0 (f(X^x_s)-\mu(f)) \rd s}{\sqrt{2t\log\log t} } =v,\quad a.s. \quad \text{and} \quad 
	\liminf_{t\to \infty} \frac{ \int^t_{0} (f(X^x_s)-\mu(f)) \rd s}{\sqrt{2t\log\log t}  } = -v, \quad a.s.
\end{align*}

Let   $\tau=\{\tau_n=\frac 1n\}_{n\in\N^+}$ be the time-step  sequence. 
The backward EM (BEM) method of \eqref{ex_sode} with time-step sequence $\tau$  is given by 
\begin{align} \label{equ_SDEBEM}
Y^{x,\tau}_{t_{i+1}}=Y^{x,\tau}_{t_i}+b(Y^{x,\tau}_{t_{i+1}})\tau_{i+1}+\sigma(Y^{x,\tau}_{t_{i}})\Delta W_i,\quad \Delta W_i=W(t_{i+1})-W(t_i).
\end{align}
For the BEM method, we verify Assumptions \ref{a2}--\ref{a3}
 as follows. 
 
{ \textit{ \Cref{a2} (\romannumeral 1) holds for any $q\geq 2$ and $\tilde{q}=1$.}} 
For any fixed $m,n\in \N$ such that $m\leq n-1$ 
and any $u\in\RR^d$, 
\begin{equation} \label{sode_a1step1}
\begin{aligned}
&\quad  \E[ |Y^{u, \tau}_{t_m, t_{n}}|^2] - \E[ |Y^{u, \tau}_{t_{m}, t_{n-1}}|^2] +  \E[ |Y^{u, \tau}_{t_m, t_{n}} - Y^{u, \tau}_{t_{m}, t_{n-1}}|^2] \\
 &= 2 \E \langle   Y^{u, \tau}_{t_m, t_{n}} - Y^{u, \tau}_{t_{m}, t_{n-1}}, Y^{u, \tau}_{t_m, t_{n}} \rangle \\
	&\leq -\tau_n c_3 \E [ |Y^{u, \tau}_{t_m, t_{n}}|^2] +c_7\tau_n -c_6\tau_n  \E [ \|\sigma(Y^{u, \tau}_{t_m, t_{n}} )\|^2_{HS}] +  \tau_n  \E [ \|\sigma(Y^{u, \tau}_{t_{m}, t_{n-1}} )\|_{HS}^2] + \E[ |Y^{u, \tau}_{t_m, t_{n}} - Y^{u, \tau}_{t_{m}, t_{n-1}}|^2]. 
\end{aligned}
\end{equation}
This leads to 
\begin{align*}
	(1+c_3\tau_{n}) \E[ |Y^{u, \tau}_{t_m, t_{n}}|^2] +c_6\tau_n  \E [ \|\sigma(Y^{u, \tau}_{t_m, t_{n}} )\|_{HS}^2] 
&	\leq c_7\tau_n + \E [|Y^{u, \tau}_{t_m,t_{n-1}}|^2]+ \tau_{n} \E [ \|\sigma(Y^{u, \tau}_{t_{m}, t_{n-1}} )\|_{HS}^2]. 
\end{align*}
By the arbitrariness of $c_6>0$, choosing $c_6 \geq 1+c_3 \bar{\tau}$ yields
\begin{align*}
	&\quad 
		\E[ |Y^{u, \tau}_{t_m, t_{n}}|^2]  +  \tau_n  \E [ \|\sigma(Y^{u, \tau}_{t_m, t_{n}} )\|_{HS}^2] -\frac{c_7}{c_3} \\
		&\leq \frac{1}{1+c_3\tau_{n}} \Big( \E [|Y^{u, \tau}_{t_m,t_{n-1}}|^2]+ \tau_{n-1} \E [ \|\sigma(Y^{u, \tau}_{t_{m}, t_{n-1}} )\|_{HS}^2 ] -\frac{c_7}{c_3} \Big) \\
		&\leq  \frac{1}{\prod_{i=m+1}^{n}(1+c_3\tau_i)} \Big( |u|^2+ \tau_{m}  \|\sigma(u )\|_{HS}^2  -\frac{c_7}{c_3} \Big) 
		\leq K(1+|u|^2).
\end{align*}
Similar to \cite[Theorem 4.1]{Jin2025},  Assumption \ref{a2} (\romannumeral 1) holds for any $q\geq 2$ and $\tilde{q}=1$ by induction.

\textit{\Cref{a2} (\romannumeral 2) holds for $\kappa=0, \rho^\tau (t)=e^{-\frac{c_8}{2} t}$.}
We
define $D_{m,n}^{x,y}:=Y^{x, \tau}_{t_m, t_{n}}-Y^{y, \tau}_{t_m, t_{n}}$ for any   $n,m\in \N$ such that $n\geq m$, with initial value $D_{m,m}^{x,y}=x-y$. 
Using the same technique in \eqref{sode_a1step1}, we obtain
\begin{align*}
	&\quad (1+c_5 \tau_{n})\E\big[ |D_{m,n}^{x,y}|^2\big] +15 \tau_{n} \E \big[\big\|\sigma\big(Y^{x, \tau}_{t_m, t_{n}}\big)-\sigma\big(Y^{y, \tau}_{t_m, t_{n}}\big)\big\|_{HS}^2\big]\\
	&\leq \E\big[ |D_{m,n-1}^{x,y}|^2]+ \tau_{n} \E\big[\big\|\sigma\big(Y^{x, \tau}_{t_m, t_{n-1}}\big)-\sigma\big(Y^{y, \tau}_{t_m, t_{n-1}}\big)\big\|_{HS}^2\big],
\end{align*}
which together with  $1+c_5\bar{\tau}\leq 1+c_5 \leq 15$, $a<e^{-(1-a)}$ for $a\in (0,1)$, and  $c_8:=  \frac{c_5}{1+c_5\bar{\tau}}$ implies 
\begin{align*}
	&\quad \E[ |D_{m,n}^{x,y}|^2]  +  \tau_{n} \E \big[\big\|\sigma\big(Y^{x, \tau}_{t_m, t_{n}}\big)-\sigma\big(Y^{y, \tau}_{t_m, t_{n}}\big)\big\|_{HS}^2 \big] \\
	&\leq \frac{1}{1+c_5 \tau_{n}} \Big( \E[ |D_{m,n-1}^{x,y}|^2]+ \tau_{n-1} \E[\big\|\sigma\big(Y^{x, \tau}_{t_m, t_{n-1}}\big)-\sigma\big(Y^{y, \tau}_{t_m, t_{n-1}}\big)\big\|_{HS}^2] \Big) \\
	&\leq \exp\Big\{-\frac{c_5}{1+c_5\tau_n} \tau_n\Big\}  \Big( \E[ |D_{m,n-1}^{x,y}|^2]+ \tau_{n-1} \E[\big\|\sigma\big(Y^{x, \tau}_{t_m, t_{n-1}}\big)-\sigma\big(Y^{y, \tau}_{t_m, t_{n-1}}\big)\big\|_{HS}^2] \Big) \\
&\leq K \exp\Big\{-\frac{c_5}{1+c_5\bar{\tau}} \sum^n_{i=m+1} \tau_i \Big\}  |x-y|^2 
	\leq  K \exp\big\{-c_8 (t_n-t_m) \big\} |x-y|^2.
\end{align*}
Hence, Assumption \ref{a2} (\romannumeral 2) is verified with $\kappa=0, \rho^\tau (t)=e^{-\frac{c_8}{2} t}$. 

\textit{\Cref{a3} holds for $\alpha= \frac{c_5}{2}$ when $\frac{c_5}{2}<1$, $\alpha=1-\varepsilon$ for any $\varepsilon\in (0,1)$ when $\frac{c_5}{2}=1$,  and $\alpha=1$ when $\frac{c_5}{2}>1$.}
We define the error  by $e_{m,n}:= X^{x}_{t_m, t_{n}} - Y^{x, \tau}_{t_m, t_{n}}$ for $m,n\in\N$ such that $n>m$ and $e_{m,m}=0$. Then we obtain
\begin{align*}
	e_{m,n}-e_{m,n-1} 
	&=\int^{t_n}_{t_{n-1}} b(X_{t_m,s}^x) \rd s + \int^{t_n}_{t_{n-1}} \sigma(X_{t_m,s}^x) \rd W(s) 
- \tau_{n} b(Y^{x, \tau}_{t_m, t_{n}}) -\sigma(Y^{x, \tau}_{t_m, t_{n-1}}) \Delta W_{n-1},
\end{align*}
which combining the Young's inequality with $\varepsilon_n\in(0,1)$ implies
\begin{align} \label{equ_odea2decom}
	&\quad 2\E[ \langle e_{m,n}-e_{m,n-1}, e_{m,n} \rangle] \nn \\
	&=2\tau_n \E \langle (b(X^{x}_{t_m, t_n} )-b( Y^{x, \tau}_{t_m, t_{n}} )), e_{m,n} \rangle +2 \E \langle \int^{t_n}_{t_{n-1}} (b(X^{x}_{t_m,s} )-b( X^{x}_{t_m, t_{n}} )) \rd s, e_{m,n} \rangle \nn\\
	&\quad + 2\E \langle \int^{t_n}_{t_{n-1}} (\sigma(X^{x}_{t_m,s} )-\sigma( X^{x}_{ t_m, t_{n-1}} )) \rd W(s), e_{m,n} \rangle \nn\\
	&\quad +2 \E \langle (\sigma(X^{x}_{t_m, t_{n-1}} )-\sigma( Y^{x, \tau}_{t_m, t_{n-1}} )) \Delta W_{n-1}, e_{m,n} \rangle \nn \\
	&\leq -c_5\tau_n \E [|e_{m,n}|^2 ] -15\tau_n \E[ \| \sigma(X^{x}_{t_m, t_{n}} )-\sigma( Y^{x, \tau}_{t_m, t_{n}} )\|_{HS}^2] + \varepsilon_n  \E[ |e_{m,n}|^2]  \nn \\
	&\quad + \frac{1}{\varepsilon_n} \E \Big[\Big| \int^{t_n}_{t_{n-1}} (b(X^{x}_{t_m,s} )-b( X^{x}_{t_m,t_{n}} )) \rd s\Big|^2\Big] + 2\E\Big[\Big|\int^{t_n}_{t_{n-1}} \sigma(X^{x}_{t_m,s} )-\sigma( X^{x, \tau}_{t_m, t_{n-1}} ) \rd W_s \Big|^2 \Big] \nn \\
	&\quad + 2\tau_n \E[ \| \sigma(X^{x}_{t_m, t_{n-1}} )-\sigma( Y^{x, \tau}_{t_m, t_{n-1}} )\|_{HS}^2]  +\E[| e_{m,n}-e_{m,n-1} |^2].
\end{align}
Using \Cref{a1} (\romannumeral 1), \Cref{ode_a2} 
and the It\^{o} isometry, it holds that 
\begin{align*}
	&\quad \E \Big[\Big|  \int^{t_n}_{t_{n-1}} (b(X^{x}_{t_m,s} )-b( X^{x}_{t_m, t_{n}} ))\, \rd s\Big|^2\Big] \\
	&\leq K \tau_n\E \Big[ \int^{t_n}_{t_{n-1}} |b(X^{x}_{t_m,s} )-b( X^{x}_{t_m, t_{n}} )|^2\rd s\Big] \\
	&\leq K \tau_n\E \Big[ \int^{t_n}_{t_{n-1}} \big( 1+  |X^{x}_{t_m,s}|^{2(\bar{q}-1)}+  | X^{x}_{t_m, t_{n}} |^{2(\bar{q}-1)}  \big) | X^{x}_{t_m,s}-X^{x}_{t_m, t_{n}}|^2 \rd s\Big] 
	  \leq K (1+|x|^{4\bar{q}-2}) \tau_n^3
\end{align*}
and
\begin{align*}
	\E\Big[\Big|\int^{t_n}_{t_{n-1}} \sigma(X^{x}_{t_m,s} )-\sigma( X^{x, \tau}_{t_m, t_{n-1}} ) \rd W_s \Big|^2 \Big]  \leq K \tau_n^2,
\end{align*}
where the continuity property of $\{X^x_t\}_{t\geq 0}$ such that $(\E [ | X^x_t -X^x_s|^p])^{\frac 1p} \leq K(1+|x|^{\bar{q}} ) |t-s|^{1/2}$ for any $p\geq 2$ is used.
Using the identity $\E[ |e_{m,n}|^2] -\E[ |e_{m,n-1}|^2] +\E[| e_{m,n}-e_{m,n-1} |^2] =2\E[ \langle e_{m,n}-e_{m,n-1}, e_{m,n} \rangle] $ and cancelling the same terms on both sides of \eqref{equ_odea2decom}, we arrive at 
\begin{align*}
&\quad 	\E[ |e_{m,n}|^2] -\E[ |e_{m,n-1}|^2] + 15\tau_n \E [\| \sigma(X^{x}_{t_m,t_n} )-\sigma( Y^{x, \tau}_{ t_m, t_{n}} )\|_{HS}^2] \\
&\leq (-c_5\tau_n +\varepsilon_n) \E [|e_{m,n}|^2] +K\frac{1}{\varepsilon_n}(1+|x|^{4\bar{q}-2}) \tau_n^3 +K \tau_n^2 \\
&\quad + 2\tau_n \E [\| \sigma(X^{x}_{t_m,t_{n-1}} )-\sigma( Y^{x, \tau}_{ t_m, t_{n-1}} )\|_{HS}^2]. 
\end{align*}
By iteration, we have
\begin{align}  \label{equ_odeEmn1}
	&\quad \E[ |e_{m,n}|^2] +2 \tau_n \E [\| \sigma(X^{x}_{t_m, t_n} )-\sigma( Y^{x, \tau}_{t_m, t_{n}} )\|_{HS}^2] \nn\\
	&\leq \frac{1}{1+ c_5\tau_n -\varepsilon_n} ( \E[ |e_{m, n-1}|^2] +2 \tau_n \E [\| \sigma(X^{x}_{t_m, t_{n-1}} )-\sigma( Y^{x, \tau}_{t_m, t_{n-1}} )\|_{HS}^2] ) \nn \\
	&\quad +  K(1+|x|^{4\bar{q}-2})\frac{\tau_n^2}{1+ c_5\tau_n -\varepsilon_n} \nn \\
	&\leq \frac{1}{\prod^n_{i=m+1} (1+ c_5\tau_i -\varepsilon_i)} ( \E [|e_{m,m}|^2]+2\tau_m \E[\| \sigma(x)-\sigma(x) \|_{HS}^2]  ) \nn \\
	&\quad + K(1+|x|^{4\bar{q}-2})\sum^n_{k=m+1} \Big(\frac{1}{ \prod^n_{i=k} (1+ c_5\tau_i -\varepsilon_i)} \tau_k^2 \Big),
\end{align}
where we take $\varepsilon_n = \frac{c_5\tau_n}{2}$ such that  $1 \leq 1+ c_5\tau_n -\varepsilon_n\leq \frac{15}{2}$. 
Combining $2\tau_n \E [\| \sigma(X^{x}_{t_m, t_n} )-\sigma( Y^{x, \tau}_{t_m, t_{n}} )\|_{HS}^2]>0$ and \eqref{equ_odeEmn1}, we  arrive at 
\begin{align*}
	\E[ |e_{m,n}|^2]  \leq K(1+|x|^{4\bar{q}-2})\sum^n_{k=m+1} \Big(\frac{1}{ \prod^n_{i=k} (1+ \frac{c_5}{2}\frac 1i)} \frac{1}{k^2} \Big).
\end{align*}
Here,
\begin{align*}
	\prod^n_{i=k} \big(1+ \frac{c_5}{2}\frac 1i \big) &\geq \exp \Big\{ \frac{c_5}{2} (\log(2(n+1)+c_5)- \log(2k+c_5)) \Big\} 
	= \Big(\frac{2(n+1)+c_5}{2k+c_5} \Big)^{c_5/2},
\end{align*}
since 
\begin{align*}
	\log \Big( \prod^n_{i=k} \big(1+ \frac{c_5}{2}\frac 1i \big) \Big) &=\sum^n_{i=k} \log  \Big( 1+ \frac{c_5}{2}\frac 1i  \Big) \geq \sum^n_{i=k} \frac{  \frac{c_5}{2i}}{1+  \frac{c_5}{2i}} =\sum^n_{i=k} \frac{c_5}{2i+c_5} \\
	&\geq \frac{c_5}{2} (\log(2(n+1)+c_5)- \log(2k+c_5)).
\end{align*}
For the case $\frac{c_5}{2}=1$, for any $n> m$  and $\varepsilon \in(0,1)$, we have 
\begin{align} \label{equ_ode_emn=1}
		\E[ |e_{m,n}|^2]  &\leq K(1+|x|^{4\bar{q}-2}) \sum^n_{k=m+1} \Big( \frac{k+\frac{c_5}{2}}{(n+1)+\frac{c_5}{2}}  \frac{1}{k^2} \Big)\nn \\
		&= K(1+|x|^{4\bar{q}-2}) \Big( \frac{1}{n}   \Big(\sum^n_{k=m+1} \frac{1}{k} \Big) + \frac{\frac{c_5}{2}}{n}  \Big(\sum^n_{k=m+1} \frac{1}{k^2} \Big) \Big) \nn\\
		&\leq K (1+|x|^{4\bar{q}-2})\Big(\frac{1}{n}  (\log n+1) +\frac{1}{n} \Big)  \nn  \\
	&	\leq  K(1+|x|^{4\bar{q}-2})\frac{1}{n} (\log n \vee 1)
	\leq K(1+|x|^{4\bar{q}-2}) \tau_n^{1-\varepsilon}.
\end{align}
For the case  $\frac{c_5}{2}> 1$, we obtain
\begin{align}  \label{equ_ode_emn>1}
	\E[ |e_{m,n}|^2]  &\leq K(1+|x|^{4\bar{q}-2}) \sum^n_{k=m+1} \Big(\big( \frac{k+\frac{c_5}{2}}{(n+1)+\frac{c_5}{2}} \big)^{\frac{c_5}{2}}  \frac{1}{k^2} \Big) \nn \\
	&\leq K(1+|x|^{4\bar{q}-2}) \frac{1}{n^{\frac{c_5}{2}}} \Big(  \sum^n_{k=m+1} \big( k^{\frac{c_5}{2}}  +1 \big) \frac{1}{k^2} \Big) 
	\leq K(1+|x|^{4\bar{q}-2})\tau_n.
\end{align}
For the case $0<\frac{c_5}{2}<1$,  we similarly have
\begin{align}  \label{equ_ode_emn<1}
	\E[ |e_{m,n}|^2]  &\leq K(1+|x|^{4\bar{q}-2}) \sum^n_{k=m+1} \Big(\big( \frac{2k}{(n+1)+\frac{c_5}{2}} \big)^{\frac{c_5}{2}}  \frac{1}{k^2} \Big) \nn\\
	&\leq K(1+|x|^{4\bar{q}-2})  \frac{1}{n^{\frac{c_5}{2}}}    \Big(\sum^n_{k=m+1} \frac{1}{k^{2-\frac{c_5}{2}}} \Big)   
	\leq  K(1+|x|^{4\bar{q}-2})\tau_n^{\frac{c_5}{2}}.
\end{align}
Combining \eqref{equ_ode_emn=1}--\eqref{equ_ode_emn<1}, the numerical method satisfies Assumption \ref{a3} with $\alpha= \frac{c_5}{2}$ when $\frac{c_5}{2}<1$, $\alpha=1-\varepsilon$ for any $\varepsilon\in (0,1)$ when $\frac{c_5}{2}=1$ and $\alpha=1$ when $\frac{c_5}{2}>1$, respectively. 

Conditions (\romannumeral 1) and (\romannumeral 4) in  Theorem \ref{thm_numerLIL} hold since $\sum^\infty_{k=1} (\tau_k)^{1+\delta} <\infty$ for any $\delta > 0$. 
Conditions (\romannumeral 2) and (\romannumeral 3) hold since for any $\gamma\in(0,1]$ and $k\geq 0$, 
$$
\sum^\infty_{i=k} \tau_i (\rho^\tau(t_i-t_{k}))^\gamma \leq \sum^\infty_{i=k} \tau_i (\rho(t_i-t_{k}))^\gamma \leq 1+ \int^\infty_{t_{k}} e^{-\frac{c_5}{2}\gamma (t-t_{k})} \rd t =2,
$$
where the fact that $c_8\geq c_5$ is used.
Therefore,  the numerical solution $\{ Y^{x,\tau}_{t_n}\}_{n\in\N}$ obeys the  LIL in Theorem \ref{thm_numerLIL} with any $f\in \mathcal{C}_{p,\gamma}$ with $p\geq 1$ and $\gamma \in(0,1]$.

\subsection{SPDE} 
For SPDEs, one typically needs to analyze the properties of solutions in various spaces. To accommodate this need, we introduce a subspace refinement of the original Assumptions  \ref{a1}--\ref{a3}, under which Proposition \ref{thm_exactLIL} and Theorem \ref{thm_numerLIL} remain valid. We next formulate these assumptions and state the corresponding LIL results.

Denote by $\tilde{E}$ a subspace of $E$ and $\| \cdot \|_{\tilde{E}}$ the corresponding norm. We further assume that $\| x \| \leq \| x \|_{\tilde{E}}$ for any $x\in \tilde{E}$.
 \begin{assp}\label{a_general1}
	Assume that the time-homogeneous  Markov process $\{X^x_t\}_{t\ge 0}$ satisfies the following conditions.
	\begin{itemize}
		\item[(\romannumeral 1)] There exist  constants $r\ge 2,\tilde r\ge 1$, and $\tilde{L}_1>0$ such that for any $x\in \tilde{E}$,
		\begin{align*}
			&\sup_{t\geq0}\E [\|X^{x}_t\|_{\tilde{E}}^{r}]\leq \tilde{L}_1(1+\|x\|_{\tilde{E}}^{\tilde rr}).
		\end{align*}
		\item[(\romannumeral 2)] There exist constants $\gamma_1\in(0,1]$, $\beta\in[0,r-1],\, \tilde{L}_2>0$,  and a  continuous function $\rho:[0,+\infty)\to[0,+\infty)$  with $\int_{0}^{\infty} (\rho(t))^{\gamma_1} \mathrm dt<\infty$  and $\sum^\infty_{k=1} (\rho(k))^{\gamma_1} <\infty$ 
		such that for any $x,y\in \tilde{E},$
		\begin{align*}
			\big(\E[\|X^{x}_{t}-X^{y}_{t}\|^2]\big)^{\frac{1}{2}}\leq \tilde{L}_2\|x-y\|(1+\|x\|_{\tilde{E}}^{\beta}+\|y\|_{\tilde{E}}^{\beta})\rho(t).
		\end{align*}
		\item[(\romannumeral 3)] There exist constants $l\in[0, r]$ and $\tilde{l}>0$ such that for $t\geq 0$,
		$$
		\big(\E[\|X^{x}_{t}-x\|^2]\big)^{\frac{1}{2}}\leq  \tilde{L}_3 (1+\|x\|_{\tilde{E}}^l) t^{\tilde{l}}.
		$$
	\end{itemize}
\end{assp} 
 \begin{assp}\label{a_general2}
	Assume that $\{Y^{x,\tau}_{t_k}\}_{k\in\mathbb N}$ satisfies  the following conditions.
	\begin{itemize}
		\item[\textup{(i)}] 
		There exist  constants $q\ge 2,\tilde q\ge 1$, and $\tilde{L}_4>0$ such that
		\begin{align*}
			\sup_{k\geq m}\E [\|Y^{x,\tau}_{t_m, t_{k}}\|_{\tilde{E}}^{q}]\leq \tilde{L}_4 (1+\|x\|_{\tilde{E}}^{\tilde qq})
		\end{align*}
		for any fixed $m\in \N$.
		
		\item[\textup{(ii)}] There exist constants 
		$\kappa \in[0,q-1], \tilde{L}_5>0$, and a    function $\rho^{\tau}: [0,+\infty)\to[0,+\infty)$
		such that for any $x, y\in \tilde{E}$, $m,n \in\N$ with $n\geq m$,
		\begin{align*}
			\big(\E[\|Y^{x,\tau}_{t_m,t_n}-Y^{y,\tau}_{t_m,t_n}\|^2]\big)^{\frac{1}{2}}&\leq \tilde{L}_5\|x-y\|(1+\|x\|_{\tilde{E}}^{\kappa}+\|y\|_{\tilde{E}}^{\kappa})\rho^{\tau}(t_n-t_m).
		\end{align*}
	\end{itemize}
\end{assp} 
\begin{assp}\label{a_general3}
	Assume that there exist $\a\in\mathbb R_+$ and  $L_6>0$ such that for any $x\in \tilde{E}$ and $m,n \in \mathbb{N}^+$ such that $n\geq m$,  
	$$
	\big(\E[\|X^{x}_{t_m, t_{n}}-Y^{x,\tau}_{t_m, t_n}\|^2]\big)^{\frac{1}{2}}\leq L_6(1+\|x\|_{\tilde{E}}^{\tilde r\vee \tilde q}) 
	\tau_n^{\alpha}.$$
\end{assp}
Then Proposition \ref{thm_exactLIL} and Theorem \ref{thm_numerLIL} remain valid when replacing 
 Assumptions \ref{a1}--\ref{a3} by Assumptions \ref{a_general1}--\ref{a_general3}.

In this subsection, we check Assumptions \ref{a_general1}--\ref{a_general3} and show the LILs  for the case of SPDE. 
Consider the following infinite-dimensional stochastic evolution equation 
\begin{align}\label{ex_spde}
	\mathrm dX^x_t=AX^x_t\mathrm dt+F(X^x_t)\mathrm dt+\mathrm dW(t), \quad X^x_0=x \in H,
\end{align}
where  $H:=L^2((0,1);\mathbb R)$ endowed with the inner product $\langle \cdot, \cdot \rangle$ and the norm $\|\cdot \|$, 
$A:\textrm{Dom}(A)\subset H\to H$ is the Dirichlet Laplacian with homogeneous Dirichlet boundary conditions, and  $\{W(t)\}_{t\ge 0}$ is a general $Q$-Wiener process on a filtered probability space $(\Omega,\mathcal F,\{\mathcal F_t\}_{t\ge 0},\mathbb P)$.
Let $\| \cdot\|_{\mathcal L(H)}$ and $\| \cdot\|_{\mathcal L^2(H)}$ denote the usual operator norm and the Hilbert--Schmidt operator norm on $H$, respectively.
Assume that $Q$ and $A$ commute, and  that $\|(-A)^{\frac{\beta_1-1}{2}}Q^{\frac12}\|_{\mathcal L_2}<\infty$ for $\beta_1\in(0,1]$. 
Denote by  $\{\lambda_j\}_{j\in\N^+}=\{j^2 \pi^2\}_{j\in\N^+}$  the  eigenvalues of $-A$.
For any $x, y\in H$, let the drift coefficient  $F$  satisfy
\begin{align*}
	\langle x-y, F(x)-F(y)\rangle \leq c_9 \|x-y\|^2
\end{align*}
with $c_9< \lambda_1$ and $\|F(x)-F(y)\| \leq L_F\|x-y\|.$ 
For this equation, we choose the subspace $\dot{H}^{\beta_1}:= \text{Dom}((-A)^{\frac{\beta_1}{2}})$ and denote the corresponding norm by $\|\cdot\|_{\beta_1}$. 

Assumption \ref{a_general1} (\romannumeral1)--(\romannumeral3)  are satisfied with any $r\ge 2$,  $\tilde r=1$, $ \beta=0,$  $\rho(t)=e^{-(\lambda_1-c_9)t}$, $\gamma_1\in(0,1], l=1, \tilde{l}=\frac{\beta_1}{2}$; 
see \cite[Proposition 2.1]{Chen2023} for details.  
Using Proposition \ref{thm_exactLIL}, the exact solution 
$\{X^x_t\}_{t\ge 0}$ admits a unique invariant measure $\mu$ and fulfills the LIL for $f \in \mathcal{C}_{p,\gamma}$ with any $p\geq1$ and $\gamma\in(0,1]$.

Consider the temporal semi-discretization $\{Y^{x,\tau}_{t_i}\}_{i\in\mathbb N}$  based on the exponential Euler method:
\begin{align*}
	Y^{x,\tau}_{t_{k+1}}=S(\tau_{k+1})(Y^{x,\tau}_{t_k}+F(Y^{x,\tau}_{t_k})\tau_{k+1}+\Delta W_k),\quad Y^{x,\tau}_0=x,
\end{align*}
where 
$\Delta W_k=W(t_{k+1})-W(t_k)$ and $ S(t):=e^{At}$.
Here we further assume $\frac{\lambda_1-c_9}{4 L^2_F}\geq 1$ for simplicity and define 
$\{\tau_n \}_{n\in\N^+}=\{ \frac 1n\}_{n\in\N^+}$. For the case that $\frac{\lambda_1-c_9}{4 L^2_F} \in (0,1)$, we have similar results by defining $\{\tau_n\}_{n\in\N^+} = \{ \frac 1n \wedge (\frac{\lambda_1-c_9}{4 L^2_F})\}_{n\in\N^+}.$ 

Similar to  \cite[Propositions 2.3]{Chen2023}, 
we prove that
Assumption \ref{a_general2} (\romannumeral 1) is satisfied with any $q\ge 2$ and $\tilde q=1$. Combining
\begin{align*}
	\E [ \| Y^{x,\tau}_{t_m, t_n}-Y^{y,\tau}_{t_m, t_n}\|^{2}] &\leq e^{-2\lambda_1\tau_{n}} (1+2c_9\tau_{n}+L^2_F\tau_{n}^2) \E [ \| Y^{x,\tau}_{t_m, t_{n-1}}-Y^{y,\tau}_{t_m, t_{n-1}}\|^{2}] \\
		&\leq  e^{-2\lambda_1 (t_{n} -t_m)}  \prod_{i=m+1}^{n} ( 1+2c_9\tau_{i}+L^2_F\tau_{i}^2) \E[  \| x-y\|^{2} ] \\
		&\leq e^{(2c_9-2\lambda_1) (t_{n} -t_m) +L_F^2 (\sum^{n}_{i=m+1} \tau^2_i) }   \E[  \| x-y\|^{2} ]
	\end{align*}
with the facts that $c_9-\lambda_1<0$, $\sum^\infty_{i=1} \tau_i^2$ converges, and $t_n$ tends to infinity as $n\to \infty$, we prove Assumption \ref{a_general2} (\romannumeral 2) with
$\kappa=0$, $\rho^{\tau}(t)=e^{-ct}$ for some $c>0$. 

For Assumption \ref{a_general3}, we 
denote the continuous version of $Y^{x,\tau}_{t_m, t_n}$ by
$$
Y^{x,\tau}_{t_m, t} =  S(t-t_m) x + \int^t_{t_m} S(t-[s]_{\tau}) F(Y^{x,\tau}_{t_m, [s]_{\tau}}) \rd s +\int^t_{t_m} S(t-[s]_{\tau}) \rd W(s)
$$
and 
define an auxiliary process $\bar{Y}^{x,\tau}_{t_m, t}$ such that 
$$
\rd \bar{Y}^{x,\tau}_{t_m, t} =A \bar{Y}^{x,\tau}_{t_m, t} \rd t +S(t-[t]_\tau) F(Y^{x,\tau}_{t_m, [t]_\tau}) \rd t+\rd W(t), \quad \bar{Y}^{x,\tau}_{t_m, t_m}=x,
$$
where $[t]_\tau :=t_n$ denotes the largest point in the sequence $\{t_n\}_{n\in\N}$ such that $t_n\leq t$.
We use  \cite[Lemma B.9]{Kruse2014} and obtain the error estimate between the numercial solution $Y^{x,\tau}_{t_m, t_n}$ and $\bar{Y}^{x,\tau}_{t_m, t_n}$ as
\begin{align} \label{pde_dissipateError}
	&\quad	\E \big[ \| Y^{x,\tau}_{t_m,t_n} -\bar{Y}^{x,\tau}_{t_m, t_n} \|^2 \big] \nn\\
	& \leq \int^{t_n}_{t_m} \| S(t_n-s) (S(s-[s]_{\tau})-I) Q^{\frac 12} \|^2_{\mathcal{L}_2(H)} \rd s \nn\\
	&\leq   \int^{t_n}_{t_m} \| (-A)^{-\frac{\beta_1}{2}} ( S(s-[s]_{\tau})-I)\|^2_{\mathcal{L}(H)} \| (-A)^{\frac 12} S(t_n-s) (-A)^{\frac {\beta_1-1}{2}} Q^{\frac 12} \|^2_{\mathcal{L}_2(H)} \rd s \nn \\
	&\leq K \sum^{n-1}_{i=m} \int^{t_{i+1}}_{t_i} \tau_{i+1}^{\beta_1} \big\| \sum^\infty_{j=1} (-A)^{\frac 12} S(t_n-s) (-A)^{\frac {\beta_1-1}{2}} Q^{\frac 12} e_j  \big\|^2 \rd s  \nn \\
	&= K \sum^{n-1}_{i=m}  \sum^\infty_{j=1}    \int^{t_{i+1}}_{t_i} \tau_{i+1}^{\beta_1} \|(-A)^{\frac {\beta_1-1}{2}} Q^{\frac 12} e_j \|^2  \lambda_j e^{-2\lambda_j(t_n-s) } \rd s  \nn \\
	&\leq K  \sum^\infty_{j=1}   \|(-A)^{\frac {\beta_1-1}{2}} Q^{\frac 12} e_j \|^2  \Big(  \sum^{n-1}_{i=m} \big[   \tau_{i+1}^{\beta_1} \big(e^{-2\lambda_j(t_n-t_{i+1}) } - e^{-2\lambda_j(t_n-t_i) } \big) \big] \Big).
\end{align}
We have estimates that
\begin{align} \label{pde_eEstimate}
	e^{-2\lambda_j(t_n-t_{i+1}) } - e^{-2\lambda_j(t_n-t_i) } &\leq e^{-2\lambda_j(\log (n+1)-\log (i+2) ) } - e^{-2\lambda_j(\log n-\log i ) } \nn\\
	&= \Big(\frac{i+2}{n+1} \Big)^{2\lambda_j} - \Big(\frac{i}{n} \Big)^{2\lambda_j}. 
\end{align}
With the fact that for $\theta \in(0,1)$ and $x >0$,
\begin{align} \label{pde_Taylorexpand1}
	(1+x)^{\theta}  \leq 1+\theta x \quad\text{and} \quad  	(1-x)^{\theta} \geq 1-\theta x,
\end{align}
using \eqref{pde_eEstimate}, \eqref{pde_Taylorexpand1}, and the fact that $2\lambda_j>\beta_1+1$ for all $j\in\N$, we further obtain the upper bound of the sum over $i$ as
\begin{align} \label{pde_sumofi}
	&\quad \sum^{n-1}_{i=m}  \tau_{i+1}^{\beta_1} \big(e^{-2\lambda_j(t_n-t_{i+1}) } - e^{-2\lambda_j(t_n-t_i) } \big) \nn  \\
	&\leq   \sum^{n-1}_{i=m} \Big( \frac{1}{ (i+1)^{\beta_1}} \Big(\frac{i+2}{n+1} \Big)^{2\lambda_j} - \frac{1}{ (i+1)^{\beta_1}} \Big(\frac{i}{n} \Big)^{2\lambda_j}  \Big) \nn \\ 
	&=  \frac{1}{ (n+1)^{2\lambda_j }}  \sum^{n-1}_{i=m} \Big((i+2)^{2\lambda_j-\beta_1} \Big(\frac{i+2}{i+1}\Big)^{\beta_1} -  i^{2\lambda_j-\beta_1} \Big(\frac{i}{i+1}\Big)^{\beta_1} \Big(\frac{n+1}{n}\Big)^{2\lambda_j }\Big) \nn \\
	&\leq   \frac{1}{ (n+1)^{2\lambda_j }}  \sum^{n-1}_{i=m} \Big( (i+2)^{2\lambda_j-\beta_1} ( 1+\beta_1\frac{1}{i+1}) - i^{2\lambda_j-\beta_1}  ( 1-\beta_1\frac{1}{i+1})  \Big) \nn \\
	&\leq   \frac{1}{ (n+1)^{2\lambda_j }}  \Big( \sum^{n-1}_{i=m} \big( (i+2)^{2\lambda_j-\beta_1}-  i^{2\lambda_j-\beta_1}  \big)+ 2\beta_1 \sum^{n-1}_{i=m}   (i+2)^{2\lambda_j-\beta_1-1} +\beta_1 \sum^{n-1}_{i=m}  (i+1)^{2\lambda_j-\beta_1-1}  \Big) \nn \\
	&\leq  \frac{1}{ (n+1)^{2\lambda_j }} \Big((n+1)^{2\lambda_j-\beta_1}+n^{2\lambda_j-\beta_1} +2\beta_1 \big( (n+1)^{ 2\lambda_j-\beta_1-1} + (n+1)^{2\lambda_j-\beta_1 } \big) + \beta_1 (n+1)^{2\lambda_j-\beta_1 }  \Big) \nn \\
	&\leq (2+5\beta_1)\tau_n^{\beta_1}. 
\end{align}
By plugging \eqref{pde_sumofi} into \eqref{pde_dissipateError}, we have
\begin{align} \label{pde_DissipationTermErrortn}
	\E \big[ \| Y^{x,\tau}_{t_m,t_n} -\bar{Y}^{x,\tau}_{t_m, t_n} \|^2 \big] 
	\leq K\sum^\infty_{j=1}   \|(-A)^{\frac {\beta_1-1}{2}} Q^{\frac 12} e_j \|^2 \tau_n^{\beta_1}=  K  \| (-A)^{\frac {\beta_1-1}{2}} Q^{\frac 12}\|^2_{\mathcal{L}_2(H)}  \tau_n^{\beta_1}.
\end{align}
Next, we show that for any $t \in (t_{n-1}, t_{n})$, a similar estimate holds as above. By applying the decomposition in \eqref{pde_dissipateError} together with the technique used in \eqref{pde_sumofi}, we obtain
\begin{align} \label{pde_DissipationTermErrort}
	&\quad \E \big[ \| Y^{x,\tau}_{t_m,t} -\bar{Y}^{x,\tau}_{t_m, t} \|^2 \big] \nn\\
	&\leq K \sum^{n-2}_{i=m} \int^{t_{i+1}}_{t_i} \tau_{i+1}^{\beta_1} \big\| \sum^\infty_{j=1} (-A)^{\frac 12} S(t-s) (-A)^{\frac {\beta_1-1}{2}} Q^{\frac 12} e_j  \big\|^2 \rd s \nn\\
	&\quad + K \int^{t}_{t_{n-1}} (t-t_{n-1})^{\beta_1} \big\| \sum^\infty_{j=1} (-A)^{\frac 12} S(t-s) (-A)^{\frac {\beta_1-1}{2}} Q^{\frac 12} e_j  \big\|^2 \rd s  \nn\\
	&\leq K  \sum^\infty_{j=1}   \|(-A)^{\frac {\beta_1-1}{2}} Q^{\frac 12} e_j \|^2  \Big(  \sum^{n-2}_{i=m} \big[   \tau_{i+1}^{\beta_1} \big(e^{-2\lambda_j(t-t_{i+1}) } - e^{-2\lambda_j(t-t_i) } \big) \big] + (t-t_{n-1})^{\beta_1} \big(1-e^{-2\lambda_j(t-t_{n-1})}\big)  \Big)\nn \\
	&\leq K  \| (-A)^{\frac {\beta_1-1}{2}} Q^{\frac 12}\|^2_{\mathcal{L}_2(H)}  \tau_n^{\beta_1}.
\end{align}
Here, 
the facts that
 \begin{align*}
 	&\quad \sum^{n-2}_{i=m}  \tau_{i+1}^{\beta_1} \big(e^{-2\lambda_j(t-t_{i+1}) } - e^{-2\lambda_j(t-t_i) } \big)  \\
 	&\leq   \sum^{n-2}_{i=m} \Big[ \frac{1}{ (i+1)^{\beta_1}} \Big(\frac{i+2}{n} \Big)^{2\lambda_j} - \frac{1}{ (i+1)^{\beta_1}} \Big(\frac{i}{n} \Big)^{2\lambda_j}  \Big] 
 	\leq K \tau_n^{\beta_1}
 \end{align*}
and
 \begin{align*}
 	(t-t_{n-1})^{\beta_1} \big(1-e^{-2\lambda_j(t-t_{n-1})}\big) \leq \tau_n^{\beta_1}
 \end{align*}
are used. 
For the remaining estimate,  we first consider $\beta_1\in(0,1)$ and have
\begin{align}  \label{pde_decompoOfDrift}
	&\quad \rd \left(e^{b(\lambda_1-c_9)(t-t_m)}\|\bar{Y}^{x,\tau}_{t_m, t} - X^x_{t_m, t}\|^2\right) \nn\\ 
	&\le \big[(b-2)(\lambda_1-c_9) + \varepsilon_1\big] e^{b(\lambda_1-c_9)(t-t_m)}\|\bar{Y}^{x,\tau}_{t_m, t} - X^x_{t_m, t}\|^2 \rd t \nn\\
	&\quad + \gamma e^{b(\lambda_1-c_9)(t-t_m)} \int_{t_m}^t (t-r)^{-\tfrac{\beta_1}{2}} e^{-\tfrac{\lambda_1 (t-r)}{2}}
	\big(2\| \bar{Y}^{x,\tau}_{t_m, r} - X^x_{t_m, r}\|^2 + 2\|Y^{x,\tau}_{t_m, r} - \bar{Y}^{x,\tau}_{t_m, r} \|^2\big)\,\rd r \rd t \nn\\
	&\quad + K e^{b(\lambda_1-c_9)(t-t_m)} \int_{t_m}^t  (t-r)^{-\frac{\beta_1}{2}} e^{-\frac{\lambda_1 (t-r)}{2}}
	\|Y^{x,\tau}_{t_m, r} - Y^{x,\tau}_{t_m,[r]_\tau}\|^2\,\rd r \rd t \nn\\
	&\quad + K (t-[t]_{\tau})^{\beta_1} L_F^2 e^{b(\lambda_1-c_9)(t-t_m)}\big(1+\|Y^{x,\tau}_{t_m, [t]_\tau}\|^2\big) \rd t \nn\\
	&\quad + K e^{b(\lambda_1-c_9)(t-t_m)}  (t-[t]_\tau)^{\frac{\beta_1}{2}}  \nn \\
	&\qquad \times\int_{t_m}^t (r-[r]_\tau)^{\frac{\beta_1}{2}} (t-r)^{-\beta_1} e^{-\frac{\lambda_1 (t-r)}{2}} \big(\|F(Y^{x,\tau}_{t_m, [t]_\tau})\|^2 + \|F(Y^{x,\tau}_{t_m, [r]_\tau})\|^2\big)\,\rd r\rd t\nn \\
	&\quad + K(\varepsilon_1) e^{b(\lambda_1-c_9)(t-t_m)}\big(\|Y^{x,\tau}_{t_m, t} - Y^{x,\tau}_{t_m, [t]_\tau}\|^2 + \|Y^{x,\tau}_{t_m, t}- \bar{Y}^{x,\tau}_{t_m, t}\|^2\big) \rd t \nn\\
	&=: \sum^6_{i=1} J_i (t_m, t) \rd t
\end{align}
for $\varepsilon_1 \in (0,1)$, 
whose proof is similar to \cite[Eq. (B.3)]{Chen2023}.  
Using the Fubini theorem and \eqref{pde_DissipationTermErrort}, by further assuming  that $b<\frac{\lambda_1}{2(\lambda_1-c_9)}$, we have
\begin{align*}
	\E\Big[ \int^{t_n}_{t_m} J_2(t_m, t) \rd t \Big] & \leq 2\gamma \big( \frac{\lambda_1}{2} -b(\lambda_1-c_9) \big)^{-1+\frac{\beta_1}{2}} \Gamma(\frac{2-\beta_1}{2}) \\
	&\quad \times\int^{t_n}_{t_m} e^{b(\lambda_1-c_9)(t-t_m)}  	\big( \E [\| \bar{Y}^{x,\tau}_{t_m, t} - X^x_{t_m, t}\|^2] + \E [ \|Y^{x,\tau}_{t_m, t} - \bar{Y}^{x,\tau}_{t_m, t} \|^2]\big) \, \rd t.
\end{align*}
By taking $\gamma>0$, $\varepsilon_1>0$ small enough, and $0<b< \frac{\lambda_1}{2(\lambda_1-c_9)} \wedge 2$ such that $2\gamma \big( \frac{\lambda_1}{2} -b(\lambda_1-c_9) \big)^{-1+\frac{\beta_1}{2}}\Gamma(\frac{2-\beta_1}{2})+(b-2)(\lambda_1-c_9)+\varepsilon_1\leq 0$, we arrive at
\begin{align*}
		\E\Big[ \int^{t_n}_{t_m} J_1(t_m,t)+  J_2(t_m, t) \rd t \Big] 
		&\leq K  \int^{t_n}_{t_m} e^{b(\lambda_1-c_9)(t-t_m)}  	 \E [ \|Y^{x,\tau}_{t_m, t} - \bar{Y}^{x,\tau}_{t_m, t} \|^2] \, \rd t\\
		& \leq K\sum^{n-1}_{i=m}\int^{t_{i+1}}_{t_i} e^{b(\lambda_1-c_9)(t-t_m)}   \tau_{i+1}^{\beta_1} \, \rd t.
\end{align*}
For $J_3$, we first obtain the H\"older continuity of $\{Y^{x,\tau}_{t_m, t}\}_{t\geq t_m}$ similarly to the proof of \cite[Eq. (2.14)]{Chen2023}, which states as
\begin{align} \label{pde_continuityofY}
	\E [ \|Y^{x,\tau}_{t_m, t} - Y^{x,\tau}_{t_m, s}  \|^2 ] \leq K (1+\|x\|_{\beta_1}^2) (t-s)^{\beta_1} 
\end{align}
for $t_m\leq s\leq t$. Then we see that 
\begin{align*}
	\E\Big[	\int^{t_n}_{t_m} J_3(t_m, t) \rd t  \Big] &\leq  K \int^{t_n}_{t_m} \int_{t_m}^t  e^{b(\lambda_1-c_9)(t-t_m)} (t-r)^{-\frac{\beta_1}{2}} e^{-\frac{\lambda_1 (t-r)}{2}}
\E[	\|Y^{x,\tau}_{t_m, r} - Y^{x,\tau}_{t_m,[r]_\tau}\|^2]\,\rd r \rd t \\
	&\leq K \int^{t_n}_{t_m} e^{b(\lambda_1-c_9)(t-t_m)} 	\E[\|Y^{x,\tau}_{t_m, t} - Y^{x,\tau}_{t_m,[t]_\tau}\|^2]\,\rd t \\
	&\leq K(1+\|x\|_{\beta_1}^2) \sum^{n-1}_{i=m}\int^{t_{i+1}}_{t_i} e^{b(\lambda_1-c_9)(t-t_m)}   \tau_{i+1}^{\beta_1 }\, \rd t.
\end{align*}
For $J_4$, we similarly obtain
\begin{align*}
		\E\Big[	\int^{t_n}_{t_m} J_4(t_m, t) \rd t  \Big]\leq K (1+\|x\|^2)    \sum^{n-1}_{i=m}\int^{t_{i+1}}_{t_i} e^{b(\lambda_1-c_9)(t-t_m)}   \tau_{i+1}^{\beta_1} \, \rd t.
\end{align*}
For $J_5$, we first use the fact that for $t\in (t_{i},t_{i+1}]$,
\begin{align*}
	\int^t_{t_m} (r-[r]_{\tau})^{\frac{\beta_1}{2}} e^{-\frac{\lambda_1(t-r)}{2}} \rd r \leq K \sum^{i}_{j=m} \tau_{j+1}^{\frac{\beta_1}{2}} \big( e^{-\frac{\lambda_1(t-t_{j+1})}{2}} - e^{-\frac{\lambda_1(t-t_{j})}{2}}  \big) \leq K \tau_{i+1}^{\frac{\beta_1}{2} }
\end{align*}
and then obtain
\begin{align} \label{pde_decomposeIntoJ}
	&\quad 	\int^t_{t_m} (r-[r]_{\tau})^{\frac{\beta_1}{2}} (t-r)^{-\beta_1} e^{-\frac{\lambda_1(t-r)}{2}} \rd r \nn\\
		&\leq \big(	\int^t_{t_m}  (t-r)^{-\beta_1(1+\varepsilon_2)} \rd r \big)^{\frac{1}{1+\varepsilon_2} } \big( \int^t_{t_m} (r-[r]_{\tau})^{\frac{\beta_1(1+\varepsilon_2) }{2\varepsilon_2}} e^{-\frac{\lambda_1(1+\varepsilon_2)}{2\varepsilon_2} (t-r)}  \rd r\big)^{\frac{\varepsilon_2}{1+\varepsilon_2}} \nn \\
		&\leq K\tau_{i+1}^{\frac{\beta_1}{2} } 
\end{align}
for any $\varepsilon_2\in(0,1)$.
Here, we choose $\varepsilon_2>0$ small enough such that $\beta_1(1+\varepsilon_2)<1$.
Using  $ \E[\|Y^{x,\tau}_{t_m, r}\|^2 ]\leq K (1+\|x\|^2)$ for any $r\geq t_m$, we have
\begin{align*} 
		\E\Big[	\int^{t_n}_{t_m} J_5(t_m, t) \rd t  \Big] 
		&\leq K  (1+\|x\|^2) \sum^{n-1}_{i=m}\int^{t_{i+1}}_{t_i} e^{b(\lambda_1-c_9)(t-t_m)}   \tau_{i+1}^{\beta_1} \, \rd t.
\end{align*}
For $J_6$, it holds that 
\begin{align*}
	\E\Big[	\int^{t_n}_{t_m} J_6(t_m, t) \rd t \Big] &\leq K (1+\|x\|^2_{\beta_1}) \sum^{n-1}_{i=m}\int^{t_{i+1}}_{t_i} e^{b(\lambda_1-c_9)(t-t_m)}   \tau_{i+1}^{\beta_1} \, \rd t.
\end{align*}
Next we give estimate for $\sum^{n-1}_{i=m}\int^{t_{i+1}}_{t_i} e^{b(\lambda_1-c_9)(t-t_m)}   \tau_{i+1}^{\beta_1} \, \rd t$. 
With the assumption that $b(\lambda_1-c_9) <{\beta_1}$, we come to 
\begin{align} \label{pde_estimateSuminJ}
	\sum^{n-1}_{i=m}\tau_{i+1}^{\beta_1}  \int^{t_{i+1}}_{t_i} e^{b(\lambda_1-c_9)(t-t_m)}   \, \rd t &=  \frac{1}{b(\lambda_1-c_9)}	\sum^{n-1}_{i=m}\tau_{i+1}^{\beta_1} \big( e^{b(\lambda_1-c_9)(t_{i+1}-t_m)} -e^{b(\lambda_1-c_9)(t_{i}-t_m)}  \big) \nn\\
	&\leq  \frac{1}{b(\lambda_1-c_9)}	 \sum^{n-1}_{i=m} \tau_{i+1}^{\beta_1}  e^{b(\lambda_1-c_9)(t_{i}-t_m)} \big(b(\lambda_1-c_9) \tau_{i+1}  e^{b(\lambda_1-c_9)\tau_{i+1}} \big) \nn\\
	&=  \sum^{n-1}_{i=m} \Big(\frac{1}{i+1}\Big)^{1+\beta_1} e^{b(\lambda_1-c_9)(t_{i+1}-t_m)}.
\end{align}
We substitute the estimates of $J_1$ to $J_6$  into \eqref{pde_decompoOfDrift} 
and arrive at
\begin{align*}
	\E [ \|\bar{Y}^{x,\tau}_{t_m, t} - X^x_{t_m, t}\|^2 ] &\leq K  (1+\|x\|^2_{\beta_1}) e^{-b(\lambda_1-c_9)(t_n-t_m)}  \sum^{n-1}_{i=m} \Big(\frac{1}{i+1}\Big)^{1+\beta_1} e^{b(\lambda_1-c_9)(t_{i+1}-t_m)} \\
	&\leq K  (1+\|x\|^2_{\beta_1}) \sum^{n-1}_{i=m} \Big(\frac{1}{i+1}\Big)^{1+\beta_1} e^{-b(\lambda_1-c_9)(t_{n}-t_{i+1})} \\
	&\leq K (1+\|x\|^2_{\beta_1}) \Big[ \Big(\frac{1}{n}\Big)^{1+\beta_1} + \sum^{n-2}_{i=m} \Big(\frac{1}{i+1}\Big)^{1+\beta_1} \Big(\frac{i+2}{n} \Big)^{b(\lambda_1-c_9)} \Big] \\
	&\leq K (1+\|x\|^2_{\beta_1}) \Big[ \Big(\frac{1}{n}\Big)^{1+\beta_1} +  \Big(\frac{1}{n}\Big)^{b(\lambda_1-c_9)} \Big] \\
	&\leq K  (1+\|x\|^2_{\beta_1})\tau_n^{b(\lambda_1-c_9)}.
\end{align*}
Here, the estimate
\begin{align*}
	\sum^{n-2}_{i=m} \Big(\frac{1}{i+1}\Big)^{1+\beta_1} \Big(\frac{i+2}{n} \Big)^{b(\lambda_1-c_9)} &\leq 2 \tau_n^{b(\lambda_1-c_9)}  	\sum^{n-2}_{i=m}   \big(i+1\big)^{b(\lambda_1-c_9)-1-\beta_1} \leq  K  \tau_n^{b(\lambda_1-c_9)}  
\end{align*}
is used. 
Due to the arbitrariness of  $b\in(0,\frac{\beta_1}{\lambda_1-c_9} \wedge2)$, 
we arrive at
\begin{align} \label{pde_DriftTermErrortn}
 \E [\| \bar{Y}^{x,\tau}_{t_m, t_n} -X^x_{t_m,t_n}\|^2 ]\leq K(1+\|x\|_{\beta_1}^2)\tau_n^{(\beta_1 \wedge[ 2(\lambda_1-c_9)] )- \varepsilon} 
\end{align}
for $n\geq m$, where $\varepsilon
 \in (0, \beta_1 \wedge[ 2(\lambda_1-c_9)])$. 

Combining \eqref{pde_DissipationTermErrortn} with \eqref{pde_DriftTermErrortn} shows that 
$$
\E [\| Y^{x,\tau}_{t_m, t_n} -X^x_{t_m,t_n}\|^2 ]\leq K(1+\|x\|_{\beta_1}^2)\tau_n^{(\beta_1 \wedge  [ 2(\lambda_1-c_9)]) -\varepsilon}
$$
for any $n\geq m$ and $\varepsilon\in(0,\beta_1 \wedge  [2(\lambda_1-c_9)])$.
For the case $\beta_1=1$, the only difference in \eqref{pde_decomposeIntoJ} lies in the expression of $J_5(t_m,t)$. Specifically, we have
\begin{align*}
J_5(t_m,t):= K e^{b(\lambda_1-c_9)(t-t_m)}  (t-[t]_\tau)^{\frac{1}{2}}  \int_{t_m}^t (r-[r]_\tau)^{\frac{1}{2}}  e^{-\lambda_1 (t-r)}
\big(\|Y^{x,\tau}_{t_m, [t]_\tau}\|_{1}^2 + \|Y^{x,\tau}_{t_m, [r]_\tau}\|_{1}^2\big)\,\rd r\rd t
\end{align*}
and subsequently,
\begin{align*}
		\E\Big[	\int^{t_n}_{t_m} J_5(t_m, t) \rd t  \Big] 
&\leq K  (1+\|x\|_{\beta_1}^2) \sum^{n-1}_{i=m}\int^{t_{i+1}}_{t_i} e^{b(\lambda_1-c_9)(t-t_m)}   \tau_{i+1}^{\beta_1} \, \rd t,	
\end{align*}
while the remaining estimates proceed in the same way as above.
Hence, Assumption \ref{a_general3} is satisfied with $\alpha=(\beta_1 \wedge  [ 2(\lambda_1-c_9)]) -\varepsilon$. Using Theorem \ref{thm_numerLIL}, we prove that for any $f\in \mathcal{C}_{p,\gamma}$ with $p\geq 1$ and $\gamma \in(0,1]$, the numerical solution $\{Y^{x,\tau}_{t_n}\}_{n\in\N}$ obeys the LIL.

\appendix
\section{Proof of Proposition \ref{thm_exactLIL}} \label{Appendix1}
 Under Assumption \ref{a1}, 
the existence and uniqueness of the invariant measure
follow from  standard arguments, which are therefore omitted here. We denote  the invariant measure by $\mu$.
 
For the time-homogeneous Markov process $\{X_t\}_{t\geq 0}$, we decompose
$\int^t_0 f(X^x_s)-\mu(f) \rd s$ into $R^x_t+\mathscr M^{x}_{\lfloor t \rfloor}+\mathscr R^{x}_{\lfloor t \rfloor}$ for $t \geq 0$, where
\begin{align*}
	R^x_t:=& \int^t_{\lfloor t \rfloor} \big(f(X^x_s)-\mu(f) \big) \rd s, \\
	\mathscr M^{x}_{\lfloor t \rfloor}:=&\int_{0}^{\infty}\Big(\E\big[f(X_{s}^{x})|\mathcal F_{\lfloor t \rfloor}\big]-\mu(f)\Big)\mathrm ds-\int_{0}^{\infty}\Big(\E\big[f(X_{s}^{x})|\mathcal F_{0}\big]-\mu(f)\Big)\mathrm ds,
\end{align*}
and 
\begin{align*}
		\mathscr R^{x}_{\lfloor t \rfloor}:=&-\int_{\lfloor t \rfloor}^{\infty}\Big(\E\big[f(X_{s}^{x})|\mathcal F_{\lfloor t \rfloor}\big]-\mu(f)\Big)\mathrm ds+\int_{0}^{\infty}\Big(\E\big[f(X_{s}^{x})|\mathcal F_{0}\big]-\mu(f)\Big)\mathrm ds.
\end{align*}
Then we have the integrability result as
\begin{align*}
	\E\big[ |\mathscr M^{x}_{k} | \big]
	&\leq Kk\|f\|_{p,\gamma_1}\Big(1+ \|x\|^{\frac{p\tilde{r}^2}{2}+\tilde{r}(1+\beta)\gamma_1}\Big)<\infty,
\end{align*}
where we use  $\frac{p\tilde{r}}{2}+(1+\beta)\gamma_1 \leq r$,
Assumption \ref{a1}, and the estimate
\begin{align*}
	|\mathscr M^{x}_{k}|
	&\leq\int_{0}^{k}  \big|f(X_{t}^{x})-\mu(f)\big| \rd t 
	+ \int_{k}^{\infty}  \big| P_{t-k}f(X_{k}^{x})-\mu(f)\big| \rd t + \int_{0}^{\infty} \big| P_{t}f(x)-\mu(f)\big|  \rd t \\
	&\leq K\|f\|_{p,\gamma_1}  \int^k_0 (1+\| X_{t}^{x}\|^{\frac p2}) \ \rd t
	+K\|f\|_{p,\gamma_1}(1+\|X_{k}^{x}\|^{\frac{p\tilde{r}}{2}+(1+\beta)\gamma_1}) \int^\infty_{k} (\rho(t-k))^{\gamma_1} \rd t\\ &\quad+K\|f\|_{p,\gamma_1}(1+\|x\|^{\frac{p\tilde{r}}{2}+(1+\beta)\gamma_1} ) \int^\infty_{0} (\rho(t))^{\gamma_1} \rd t.
\end{align*}
For any $s\in \N$ such that $s\leq k$, we see that
\begin{align*}
&\quad 	\E \Big[ \mathscr M^{x,\tau}_{k} \Big\vert \mathcal{F}_{s} \Big] \\ 
&=   \E \bigg[ \int_{0}^{\infty}\Big(\E\big[f(X_{t}^{x})|\mathcal F_{k}\big]-\mu(f)\Big)\mathrm dt-\int_{0}^{\infty}\Big(\E\big[f(X_{t}^{x})|\mathcal F_{0}\big]-\mu(f)\Big)\mathrm dt \bigg\vert  \mathcal{F}_{s} \bigg] \\
&=   \int_{0}^{\infty}\Big(\E\big[f(X_{t}^{x})|\mathcal F_{s}\big]-\mu(f)\Big)\mathrm dt-\int_{0}^{\infty}\Big(\E\big[f(X_{t}^{x})|\mathcal F_{0}\big]-\mu(f)\Big)\mathrm dt = \mathscr M^{x,\tau}_{s}. 
\end{align*}
Then we find that $\{\mathscr M^{x}_{n}\}_{n\in\N}$ is a martingale. 
 We define the corresponding martingale difference sequence by $\mathcal{Z}^x_0=0$ and $\mathcal{Z}^x_n :=\mathscr M^{x}_{n}-\mathscr M^{x}_{n-1}$ for $n\in\N^+$ and verify  \eqref{equ_LILCondition1}--\eqref{equ_LILCondition2} in Lemma \ref{lemma_Heyde1973}. 

With the expression
\begin{align*}
	\mathcal Z^{x}_{n+1}
	&=\int_{n}^{n+1}\Big(f(X^{x}_t)-\mu(f)\Big)\mathrm dt+\int_{0}^{\infty}\Big(P_{t}f(X^{x}_{n+1})-\mu(f)\Big)\mathrm dt\\
	&\quad -\int_{0}^{\infty}\Big(P_{t}f(X^{x}_n)-\mu(f)\Big)\mathrm dt
\end{align*}
for all $n\in\N$ 
and by an argument similar to that in Proposition \ref{prop_ZkIntegrable}, we deduce that for any $p\geq 1$  satisfying  $2p\tilde{r} +4(1+\beta)\gamma_1 \leq r$,  it holds that
$\sup_{n\in\N} \E[ | \mathcal{Z}^x_n|^4] \leq K$.
Furthermore, using the time-homogeneous property and expressing the conditional expectation of $\mathcal{Z}^x_n$ as in \eqref{equ_GuExpression}, we obtain
$$
\E \big[ \big|\mathcal{Z}^x_{n} \big|^2 \big]=\E\big[
\E \big[ \big|\mathcal{Z}^x_{n}|^2 \big| \mathcal{F}_{n-1}\big]\big]=\E\big[H(X^x_{n-1})\big],
$$ 
where $$H(x):=  \E\Big[\big|\int_{0}^{\infty}P_{t}f(X_{1}^{x})\mathrm dt\big|^2\Big]-\big|\int_{0}^{\infty}P_{t}f(x)\mathrm dt\big|^2+2\E\Big[\int_{0}^{1}f(X^{x}_t) \int_{0}^{\infty}P_{s}f(X_{t}^{x})\mathrm ds\mathrm dt\Big].$$ 
Similar to the proof of $F^\tau_{k-1} \in \mathcal C_{\tilde{p}_F, \gamma}$ in \Cref{prop_ZkSquareMoment},  by  defining  $\bar{p}_{\gamma_1}:=2\tilde{r}(p\tilde{r}+(2+3\beta)\gamma_1)$ and examining the definition of $\mathcal C_{\bar{p}_{\gamma_1},\gamma_1}$, we can show that $H \in \mathcal C_{\bar{p}_{\gamma_1}, \gamma_1}$ with $\|H\|_{\bar{p}_{\gamma_1}, \gamma_1} \leq K\|f\|_{p,\gamma_1}^2$.
Since $H$ can be rewritten as
\begin{align*}
	H=P_1\Big|\int_0^{\infty}P_tf\mathrm dt\Big|^2-\Big|\int_0^{\infty}P_tf\mathrm dt\Big|^2+2\int_0^1P_t\Big(\int_0^{\infty}fP_sf\mathrm ds\Big)\mathrm dt,
\end{align*}
and noting that $P_t^*\mu=\mu$ for $t>0$, it follows that $\mu(H)=v^2$. 
Hence, with the aid of   \eqref{equ_apprExactPttoMu}, 
we arrive at 
\begin{align} \label{equ_exactlimitS}
	\lim_{n\to \infty} \frac{\E[| \mathscr{M}^x_n|^2]}{n} = \lim_{n\to\infty} \frac{1}{n} \sum^n_{k=1} P_{k-1} H(x) =\mu(H)=v^2
\end{align}
for $\bar{p}_{\gamma_1}\leq r$, 
which shows that $\E[| \mathscr{M}^x_n|^2] \to \infty$ as $n\to \infty$. Combining the boundness of $\E[|\mathcal{Z}^x_n|^4]$ and \eqref{equ_exactlimitS}, \eqref{equ_LILCondition1} and \eqref{equ_LILCondition1_1} hold. In the following proposition, we give the result of the almost sure convergence property of $\frac 1n \sum^n_{k=1} (\mathcal{Z}_k^x)^2$, which leads to \eqref{equ_LILCondition2} in a similar manner as the proof for Proposition \ref{prop_LILforMartingale}.

\begin{prop} \label{prop_exactalmostsureZ}
	Under assumptions of  \Cref{thm_exactLIL},
	\begin{align*}
		\lim_{n\to \infty} \frac{1}{n} \sum^n_{k=1} (\mathcal{Z}^x_k)^2 =v^2, \quad \text{a.s.}
	\end{align*}
\end{prop}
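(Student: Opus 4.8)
The plan is to mimic the argument used for the numerical counterpart in \Cref{prop_almostsureZk}, but now exploiting the time-homogeneity of $\{X_t\}_{t\geq0}$, which makes the bookkeeping considerably lighter. First I would fix an integer $c\geq 2$ and estimate the high-order moment
$$
\E\Big[\Big|\frac{1}{n}\sum_{k=1}^{n}\big((\mathcal Z^x_k)^2-v^2\big)\Big|^{2c}\Big]
=\frac{1}{n^{2c}}\E\Big[\Big|\sum_{k=1}^{n}\big((\mathcal Z^x_k)^2-v^2\big)\Big|^{2c}\Big].
$$
Expanding the $2c$-th power, reordering the indices so that $k_1\le k_2\le\cdots\le k_{2c}$, and using that $\mathcal Z^x_{k}$ is $\mathcal F_{k}$-measurable together with the tower property, one reduces — exactly as in \eqref{equ_Zalmostsure1}--\eqref{equ_tildeZsplit} — the estimate to controlling, for each fixed index $j$, the quantity $\big|(\mathcal Z^x_j)^2-v^2\big|^{2c}$ and the conditional partial sum $\sum_{k=j+1}^{m}\E[(\mathcal Z^x_k)^2-v^2\mid\mathcal F_{j}]$. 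The boundedness of $\sup_n\E[|\mathcal Z^x_n|^{4c}]$ (which follows from the condition $2p\tilde r+4(1+\beta)\gamma_1\le r$ raised to the appropriate order, in the same way as $\sup_n\E[|\mathcal Z^x_n|^4]\le K$ was obtained in the excerpt) handles the first factor.

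For the conditional partial sum, I would use the expression of $\E[(\mathcal Z^x_k)^2\mid\mathcal F_{k-1}]=H(X^x_{k-1})$ obtained in the lead-up to the proposition, and more generally $\E[(\mathcal Z^x_k)^2\mid\mathcal F_{j}]=P_{k-1-j}H(X^x_{j})$ by time-homogeneity. Hence
$$
\sum_{k=j+1}^{m}\E\big[(\mathcal Z^x_k)^2-v^2\mid\mathcal F_{j}\big]
=\sum_{k=j+1}^{m}\big(P_{k-1-j}H(X^x_j)-\mu(H)\big),
$$
and since $H\in\mathcal C_{\bar p_{\gamma_1},\gamma_1}$ with $\|H\|_{\bar p_{\gamma_1},\gamma_1}\le K\|f\|_{p,\gamma_1}^2$, the estimate \eqref{equ_apprExactPttoMu} applied to $H$ (legitimate because $\bar p_{\gamma_1}\le r$, which is one of the hypotheses of \Cref{thm_exactLIL}) gives a bound
$$
\Big|\sum_{k=j+1}^{m}\E\big[(\mathcal Z^x_k)^2-v^2\mid\mathcal F_{j}\big]\Big|
\le K\|f\|_{p,\gamma_1}^2\big(1+\|X^x_j\|^{\frac{\bar p_{\gamma_1}\tilde r}{2}+(1+\beta)\gamma_1}\big)\sum_{i=0}^{\infty}(\rho(i))^{\gamma_1},
$$
which is summable. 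Taking $\E[\,\cdot\,]$ of the $2c$-th power and invoking \Cref{a1}(i) (with the moment exponent $2c\tilde r\big(\tfrac{\bar p_{\gamma_1}\tilde r}{2}+(1+\beta)\gamma_1\big)$ required to be $\le r$, which is again implied by the conditions of \Cref{thm_exactLIL} up to enlarging $c$ as in \Cref{prop_almostsureZk}) yields a uniform-in-$j$ bound. Combining the two factors as in \textit{Step 3} of \Cref{prop_almostsureZk} gives $\E\big[\big|\sum_{k=1}^n((\mathcal Z^x_k)^2-v^2)\big|^{2c}\big]\le Kn^{c}$, whence
$$
\E\Big[\Big|\frac{1}{n}\sum_{k=1}^{n}\big((\mathcal Z^x_k)^2-v^2\big)\Big|^{2c}\Big]\le K\,n^{-c}.
$$
Choosing $c\ge 2$ makes $\sum_n n^{-c}<\infty$, so for every $\varepsilon>0$, $\sum_n\PP\{|\tfrac1n\sum_{k\le n}((\mathcal Z^x_k)^2-v^2)|>\varepsilon\}<\infty$, and the Borel--Cantelli lemma delivers the claimed almost sure convergence.

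The main obstacle — as in the numerical case — is the careful combinatorial reorganization of the $2c$-fold sum and the verification that all the moment exponents appearing after applying \Cref{a1}(i) are indeed dominated by $r$ under the hypotheses $2p\tilde r+4(1+\beta)\gamma_1\le r$ and $2\tilde r(p\tilde r+(2+3\beta)\gamma_1)\le r$ of \Cref{thm_exactLIL}; everything else is a routine transcription of the homogeneous version of the argument already carried out for $\{\tilde{\mathcal Z}^{x,\tau}_N\}$. The time-homogeneity is what replaces the quasi-uniform-subsequence machinery here, so no analogue of conditions (i)--(iv) on the step sizes is needed.
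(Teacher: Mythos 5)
There is a genuine gap in the moment accounting, and it is precisely the obstacle that forces the paper onto a different route. Your identification $\E[(\mathcal Z^x_k)^2\mid\mathcal F_j]=P_{k-1-j}H(X^x_j)$ and the use of \eqref{equ_apprExactPttoMu} for $H\in\mathcal C_{\bar p_{\gamma_1},\gamma_1}$ are fine, but the Chebyshev--Borel--Cantelli scheme with exponent $2c$ needs $c\ge 2$ (otherwise $\sum_n n^{-c}$ diverges), hence it needs $\sup_n\E[|\mathcal Z^x_n|^{4c}]<\infty$, i.e.\ at least the eighth moment of $\mathcal Z^x_n$, which requires roughly $4p\tilde r+8(1+\beta)\gamma_1\le r$, and it needs moments of $\|X^x_j\|$ of order $2c\big(\tfrac{\bar p_{\gamma_1}\tilde r}{2}+(1+\beta)\gamma_1\big)\ge 2\bar p_{\gamma_1}\tilde r+4(1+\beta)\gamma_1$. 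Neither of these is implied by the hypotheses of \Cref{thm_exactLIL}, which only give $2p\tilde r+4(1+\beta)\gamma_1\le r$ (fourth moment of $\mathcal Z^x_n$) and $\bar p_{\gamma_1}=2\tilde r(p\tilde r+(2+3\beta)\gamma_1)\le r$. Your suggestion that the required exponents are covered ``up to enlarging $c$'' points the wrong way: enlarging $c$ strengthens, rather than weakens, the moment requirements, while $r$ is fixed by \Cref{a1}. Dropping to $2c=2$ gives only an $O(n^{-1})$ bound, which is not summable, so the argument as written does not close under the stated assumptions; this is why the transcription of \Cref{prop_almostsureZk} (where the hypotheses of \Cref{thm_numerLIL} explicitly provide the higher moments, e.g.\ $4c(\tfrac{p\tilde q}{2}+\tilde d\gamma)\le q$) does not carry over to the exact process.

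The paper instead exploits time-homogeneity in a different way: it applies Birkhoff's individual ergodic theorem to the stationary chain started from $\mu$, for which $\{(\mathcal Z_k)^2\}_k$ is a stationary ergodic sequence with mean $\mu(H)=v^2$, and then transfers the conclusion to an arbitrary initial point $x$ by proving continuity in $x$ of the functionals $\Lambda_1(x)=\E\big[|\limsup_n\frac1n\sum_{k\le n}(\mathcal Z^x_k)^2-v^2|\wedge 1\big]$ and $\Lambda_2$ (the technique of \cite{Bolt2012} and \cite{Wu2022}). That continuity step only needs the first-moment bound $\sum_{k\ge K_0}\E[|\mathcal Z^x_k-\mathcal Z^y_k|]<\infty$, which follows from the contraction estimate in \Cref{a1}(\romannumeral 2) with exponents $p\tilde r$ and $\tilde r(p\tilde r+2\beta\gamma_1)$, all within the moment budget $r$ of \Cref{thm_exactLIL}. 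If you want to salvage a moment-based proof you would have to either strengthen the hypotheses of the proposition or replace the crude $2c$-th moment expansion by a genuinely different mechanism; as it stands, the ergodic-theorem-plus-continuity argument is what makes the stated assumptions sufficient.
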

\begin{proof}
	Since the process $\{X^x_t\}_{t\geq 0}$ 
	is time-homogeneous and $\mu$  is its invariant measure, the proof of Proposition \ref{prop_exactalmostsureZ} follows from Birkhoff’s individual ergodic theorem.
	 According to the proof of \cite[Lemma 3.2]{Bolt2012}, we only need to show the continuity of 
	\begin{align*}
		\Lambda_1(x) := \E \Big[ \big| \limsup_{n\to \infty} \big(\frac{1}{n} \sum^n_{k=1} (\mathcal{Z}^x_k)^2\big) -v^2 \big| \wedge 1\Big] 
	\end{align*}
	and 
	\begin{align*}
		\Lambda_2(x) := \E \Big[ \big| \liminf_{n\to \infty} \big(\frac{1}{n} \sum^n_{k=1} (\mathcal{Z}^x_k)^2\big) -v^2 \big| \wedge 1\Big].
	\end{align*}
By applying the same technique as in \cite[Lemma 4.2]{Wu2022}, we deduce that
\begin{align*}
	\big|	\Lambda_1(x)- 	\Lambda_1(y) \big| \leq 2(1+v^2) \sum^\infty_{k=K_0} \E[ | \mathcal{Z}_k^x -\mathcal{Z}_k^y|],
\end{align*}
where $K_0>1$ is a fixed integer. In the following, we only need to estimate $\sum^\infty_{k=K_0} \E[ | \mathcal{Z}_k^x -\mathcal{Z}_k^y|]$.
From Assumption \ref{a1} (ii), we first obtain 
\begin{align*}
	\big|f(X^x_t) -f(X^y_t) \big| 
	&\leq \|f\|_{p,\gamma_1} (1\wedge\|X^x_t-X^y_t\|^{\gamma_1}) \big(1+\|X^x_t\|^p+\|X^y_t\|^p\big)^{\frac 12}
\end{align*}
and 
\begin{align*}
	\big|P_tf(x) -P_tf(y) \big| & \leq \|f\|_{p,\gamma_1} \big(1+\E[\|X^x_{t}\|^p]+\E[\|X^y_{t}\|^p]\big)^{\frac 12} \big(\mathbb{W}_{2} (\mu^x_{t},\mu^y_{t}) \big)^{\gamma_1} \\
	&\leq \|f\|_{p,\gamma_1} \|x-y\|^{\gamma_1} (1+\|x\|^{\frac{p\tilde{r}}{2}+\beta\gamma_1}+\|y\|^{\frac{p\tilde{r}}{2}+\beta\gamma_1}) (\rho(t))^{\gamma_1}. 
\end{align*}
For the martingale difference sequence $\{\mathcal{Z}^x_n\}_{n\in\N}$, 
	we have  
	\begin{align*}
		\big| \mathcal{Z}^x_{n}-\mathcal{Z}^y_{n}\big| 
		&\leq   \int^n_{n-1} \|f\|_{p,\gamma_1} \|X^x_t-X^y_t\|^{\gamma_1} \big(1+\|X^x_t\|^{\frac p2}+\|X^y_t\|^{\frac p2}\big) \rd t \\
		&\quad+ K\|f\|_{p,\gamma_1}\|X^x_n-X^y_n\|^{\gamma_1} (1+\|X^x_n\|^{\frac{p\tilde{r}}{2}+\beta\gamma_1}+\|X^y_n\|^{\frac{p\tilde{r}}{2}+\beta\gamma_1}) \\
		&\quad +K\|f\|_{p,\gamma_1}\|X^x_{n-1}-X^y_{n-1}\|^{\gamma_1} (1+\|X^x_{n-1}\|^{\frac{p\tilde{r}}{2}+\beta\gamma_1}+\|X^y_{n-1}\|^{\frac{p\tilde{r}}{2}+\beta\gamma_1}).
	\end{align*}
		With the conditions that $p\leq r$ and $p\tilde{r}+2\beta\gamma_1 \leq r$, it follows that
	\begin{align*}
		&\quad \E \big[  |\mathcal{Z}^x_n - \mathcal{Z}^y_n| \big] \\
		&\leq K\|f\|_{p,{\gamma_1}} \Big( \int^n_{n-1} (\E[\|X^x_t-X^y_t\|^{2{\gamma_1}}] )^{\frac 12} \big(1+\E[\|X^x_t\|^{p}]+\E[\|X^y_t\|^{p}] \big)^{\frac 12} \rd t \\
		&\quad + (\E[\|X^x_n-X^y_n\|^{2{\gamma_1}}] )^{\frac 12} \big(1+\E[\|X^x_n\|^{p\tilde{r}+2\beta{\gamma_1}}]+\E[\|X^y_n\|^{p\tilde{r}+2\beta{\gamma_1}}] \big)^{\frac 12} \\
		&\quad + (\E[\|X^x_{n-1}-X^y_{n-1}\|^{2{\gamma_1}}] )^{\frac 12} \big(1+\E[\|X^x_{n-1}\|^{p\tilde{r}+2\beta{\gamma_1}}]+\E[\|X^y_{n-1}\|^{p\tilde{r}+2\beta{\gamma_1}}] \big)^{\frac 12} \Big) \\
		&\leq  K\|f\|_{p,{\gamma_1}}  \|x-y\|^{\gamma_1} (1+\|x\|^\beta+\|y\|^\beta)^{\gamma_1} \Big( (1+\|x\|^{p\tilde{r}}+\|y\|^{p\tilde{r}})^{\frac 12} \int^n_{n-1} (\rho(t))^{\gamma_1} \rd t \\
		&\quad + (1+\|x\|^{\tilde{r}(p\tilde{r}+2\beta{\gamma_1})}+\|y\|^{\tilde{r}(p\tilde{r}+2\beta{\gamma_1})})^{\frac 12} \big[(\rho(n))^{\gamma_1} +(\rho(n-1))^{\gamma_1} \big]\Big).
	\end{align*}
	This yields
	\begin{align*}
		&\quad \sum^\infty_{k=K_0} \E\big[ \big|\mathcal{Z}^x_k-\mathcal{Z}^y_k\big|\big]\\
		& \leq K\|f\|_{p,{\gamma_1}}  \|x-y\|^{\gamma_1} (1+\|x\|^{{\gamma_1}\beta}+\|y\|^{{\gamma_1}\beta}) \big( 1+\|x\|^{\frac{p\tilde{r}}{2}}+\|y\|^{\frac{p\tilde{r}}{2}} +\|x\|^{\tilde{r}(\frac{p\tilde{r}}{2}+\beta{\gamma_1})}+\|y\|^{\tilde{r}(\frac{p\tilde{r}}{2}+\beta{\gamma_1})}\big)
	\end{align*}
	for $f\in \mathcal{C}_{p,\gamma_1}$. 
	Therefore, we obtain $\limsup_{x\to y} |\Lambda_1(x)-\Lambda_1(y)|=0$. In a similar manner, we also have $\limsup_{x\to y} |\Lambda_2(x)-\Lambda_2(y)|=0$, which completes the proof.
\end{proof}

Similar to Proposition \ref{prop_LILforMartingale}, by applying \eqref{equ_exactlimitS} together with Proposition \ref{prop_exactalmostsureZ}, we verify conditions \eqref{equ_LILCondition1}–\eqref{equ_LILCondition2} in Lemma \ref{lemma_Heyde1973} and thereby establish the LIL for the martingale $\{\mathscr{M}^{x}_{n}\}_{n\in\N}$:
$$
\limsup_{n \to \infty} \frac{\mathscr{M}^{x}_{n} }{\sqrt{2 {n} \log \log {n}}}  = v, \quad \text{a.s.} \qquad \text{and} \qquad   \liminf_{n\to \infty} \frac{\mathscr{M}^{x}_{n} }{\sqrt{2 {n} \log \log {n}}}  = -v, \quad \text{a.s.}
$$
For the remaining part $\{\mathscr{R}^x_{n}\}_{n\in\N}$, by applying \eqref{equ_apprExactPttoMu}, we obtain 
\begin{align*}
		| \mathscr R^{x}_{n} | &\leq \int^\infty_n \Big| P_{t-n} f(X_{n}^{x})-\mu(f)\Big|  \rd t+ \int_{0}^{\infty}\Big|P_{t} f(x)-\mu(f)\Big|  \rd t\\
	&\leq  K\|f\|_{p,{\gamma_1}}\Big(1+\|x\|^{\frac{p\tilde r}{2}+(1+\beta){\gamma_1}}+\|X_n^x\|^{\frac{p\tilde r}{2}+(1+\beta){\gamma_1}} \Big).
\end{align*}
This implies that, for any $\varepsilon>0$,
\begin{align*}
	\sum^{\infty}_{n=1} \mathbb{P} \{  \frac{1}{\sqrt{n}}|\mathscr R^{x}_{n}| > \varepsilon \} &\leq \frac{1}{\varepsilon^4} 	\sum^{\infty}_{n=1}  \frac{1}{n^2}\E [ 	| \mathscr R^{x}_{n} |^4 ] \\
	&\leq \frac{K}{\varepsilon^4} \|f\|_{p,{\gamma_1}}  \big(1+\|x\|^{\tilde{r}(2p\tilde{r} +4(1+\beta){\gamma_1})} \big)  	\sum^{\infty}_{n=1}\frac{1}{n^2} <\infty.
\end{align*}
Applying the Borel--Cantelli lemma, we conclude that  $\lim_{n\to\infty} \frac{1}{\sqrt{n}} |\mathscr R^{x}_{n}| =0$ almost surely.

By denoting $\bar{R}^{x}_{k}:= \int^{k+1}_{k} | f(X^{x}_{s}) - \mu(f)| \rd s$,
we have
\begin{align} \label{equ_estimateOfRk}
\E\big[\big|	R^{x}_{t}\big|^4\big]\leq \E\big[\big|	\bar{R}^{x}_{\lfloor t \rfloor}\big|^4\big] \leq\int^{\lfloor t \rfloor+1}_{\lfloor t \rfloor}  \E \big[| f(X^{x}_{s}) - \mu(f)|^4 \big] \rd s \leq K\|f\|^4_{p,{\gamma_1}} (1+\|x\|^{2p\tilde{r}}).
\end{align}
Combining \eqref{equ_estimateOfRk} with the Borel--Cantelli lemma, we find that 
$$
\lim_{t\to\infty} \frac{1}{\sqrt{t}} |R^x_t|=0, \quad \text{a.s.}
$$
The proof of \Cref{thm_exactLIL} is completed in a similar manner to that of Theorem \ref{thm_numerLIL}.
\bibliographystyle{plain}
\bibliography{reference.bib}

\end{document}